\title[Invariant measures for 3D stochastic Primitive Equations]{Existence and Regularity of Invariant Measures for the\\ Three Dimensional Stochastic Primitive Equations}
\date{\today}
\author{Nathan Glatt-Holtz}
\address{Department of Mathematics, Virginia Tech, Blacksburg, VA 24061}
\email{\tt negh@vt.edu}
\author{Igor Kukavica}
\address{Department of Mathematics, University of Southern California}
\email{\tt kukavica@usc.edu}
\author{Vlad Vicol}
\address{Department of Mathematics, Princeton University}
\email{\tt vvicol@math.princeton.edu}
\author{Mohammed Ziane}
\address{Department of Mathematics, University of Southern California}
\email{\tt ziane@usc.edu}
\newcounter{quest}
\theoremstyle{plain}
\newtheorem{theorem}{Theorem}[section]
\newtheorem{lemma}[theorem]{Lemma}
\theoremstyle{definition}
\newtheorem{remark}[theorem]{Remark}
\def\tilde{\widetilde}
\numberwithin{equation}{section}
\newcommand{\indFn}[1]{1 \! \! 1_{#1}}
\newcommand{\sbasis}{(\Omega, \mathcal{F}, \mathbb{P}, \{\mathcal{F}_{t}\}_{t \geq 0}, W)}
\def\TT{{\mathbb T}}
\def\OO{{\mathcal M}}
\def\PP{{\mathcal P}}
\def\OO{{\mathcal O}}
\def\EE{{\mathbb E}}
\def \UU{\mathcal{U}}
\def\DD{\mathcal D}
\def\Leray{{\mathcal P}_H}
\def\ra{\rangle}
\def\la{\langle}
\def\Prb{{\mathbb P}}
\def\Pr{\mathop{\rm Pr}}
\def\Lip{\mathop{\rm Lip}}
\def\grad{\nabla_{h}}
\def\Lap{\Delta_{h}}
\def\dz{\partial_{z}}
\newcommand{\eps}{\varepsilon}
\renewcommand{\phi}{\varphi}
\def\sgn{\mathop{\rm sgn} \nolimits}
\begin{document}


\begin{abstract}
We establish the continuity of the Markovian semigroup associated with strong solutions of the stochastic 3D Primitive Equations, and prove 
the existence of an invariant measure. The proof is based on new moment bounds for strong solutions. 
The invariant measure is supported on strong solutions, but is furthermore shown to have higher regularity properties.  
\end{abstract}


\maketitle

\setcounter{tocdepth}{1}
\tableofcontents

\section{Introduction} \label{sec:intro}
The Primitive Equations are a fundamental model in climatology, forming the analytical basis for the most advanced large scale numerical
general circulation models in use today \cite{P,WP}.
In this work we develop the mathematical foundations for the study of statistically stationary states of a stochastic version of these equations.
The addition of white noise driven forcing terms to the equations of fluid dynamics has long played a significant role in the theory of turbulence 
\cite{Novikov1965, Eyink96} and in observation and data assimilation applications (cf.~e.g.~\cite{Bensoussan92} and more recently 
\cite{AzouaniOlsonTiti13, BessaihOlsonTiti13}). In particular, statistically invariant measures provide the natural framework to describe the long 
term behavior of oceanic and atmospheric processes.

The Primitive Equations are deduced from the full Boussinesq system by taking into account the Boussinesq approximation and the hydrostatic balance. 
In the Boussinesq approximation we assume that the density is constant in all but the buoyancy term and the equation of state, while in the hydrostatic 
balance approximation we use that the pressure gradient and the gravity forces dominate the rest of the terms in the vertical component of the momentum equation.

The mathematical theory of the Primitive Equations started with the works~\cite{LTW1, LTW2, LTW3}, where the mathematical framework of the equations 
was developed. In particular, the global existence of weak solutions, weak attractors, as well as the numerical analysis of the equations was obtained. 
On the other hand, the $H^2$ regularity of the linear problem was established in~\cite{Z1,Z2}. This led to the local existence of strong solutions of the 
Primitive Equations established in \cite{BGMR} and independently in \cite{HTZ} and \cite{TZ}. The global existence of strong solutions was proven 
in~\cite{CT} for the case of a cylindrical domain and Neumann boundary conditions on the top and the bottom of the ocean. The existence of solutions 
and the uniform bounds in the case of boundary conditions of Dirichlet type on the bottom of the ocean and with a variable bottom topography was 
established in~\cite{KZ1,KukavicaZiane07}.  The existence of the global attractor is given in the work~\cite{Ju} in the case of Neumann boundary 
conditions and follows from \cite{KZ3} for the general case of physical boundary  conditions. For additional background on the current state of the 
mathematical theory for the deterministic Primitive Equations, we refer the reader to~\cite{PetcuTemamZiane09} and references therein; see also~\cite{Brenier03,Ko,RousseauTemamTribbia08,Renardy09,KTVZ,CINT12,MW12,Wong12} and references therein for recent developments for the 
inviscid Primitive Equations.

The analysis of the stochastic Primitive Equations started on the two-dimensional version of the equations~\cite{GZ1} with the global existence
and uniqueness of pathwise (probabilistically strong) solutions for a general multiplicative noise. Similar results in the case of additive noise and 
$z$-weak solutions were obtained in~\cite{EwaldPetcuTemam2007}. The case of physical boundary conditions was addressed in~\cite{GT} and 
the companion work \cite{GlattHoltzTemam1}. The analysis of  three dimensional case started with the local existence and uniqueness  of pathwise 
strong solutions~\cite{DGT}. The global existence and uniqueness of strong pathwise solutions to the three-dimensional Primitive Equations was 
obtained recently in~\cite{DGHTZ12}. This required involved stopping time arguments to couple estimates in higher order $L^p$ spaces and 
anisotropic spaces depending only on vertical gradients, which allowed the authors to take advantage of the two dimensional nature of the pressure 
term. The case of an additive noise was treated in~\cite{GH07}.

In this paper we initiate the analysis of the Markovian framework and study invariant measures in this setting, 
an initial step towards understanding the time asymptotic behavior of solutions.  In Theorem~\ref{thm:existence}
we prove the existence of ergodic invariant measures, and in Theorem~\ref{thm:regularity} we study their higher regularity properties. 
Our work here will provide the initial foundations for the analysis of the unique ergodicity and mixing properties of the Primitive Equations 
which shall be given in our forthcoming work \cite{GHKVZ13b}.

Note that in~\cite{EvansGastler11} the existence of an invariant measure of the three-dimensional Primitive Equations with a {sufficiently time lagged} {kick-forcing} 
was established.  Recently in~\cite{Chueshov13} the squeezing property and exponential mixing have been obtained for this model.
This setting allows for an analysis which is much closer to the deterministic setting. The kick-forcing is particularly significant in the ``high-frequency'' 
limit where the number of stochastic kicks per unit time approaches infinity, cf.~e.g.~\cite{KuksinShirikyan03,KuksinShirikian12} and references therein.  This limit seems 
however to be out of the reach of the methods in \cite{EvansGastler11,Chueshov13}. In the present paper we work with a stochastic forcing that is given by this high frequency limit, i.e., 
white in time and colored in space. In fact, we may even allow the noise term to have a state-dependent structure.

For the analysis below we encountered a number of mathematical difficulties at the intersection of PDE theory and stochastic analysis. The basic foundations of the Makovian 
theory requires one to work in a functional setting where the pathwise existence and {continuous dependence on data} is evident. For instance, in the case of the 2D Navier-Stokes 
equations this space is $L^2$, which has the added advantage that the nonlinear term vanishes in $L^2$ estimates. The situation for the stochastic 3D Primitive Equations is different: 
while the uniqueness is classically measured in $L^2$,  in order to establish the continuous dependence on data, we need to work in strong spaces, such as $H^1$. In 
Theorem~\ref{thm:Feller} below we show that the Feller property holds in $H^1$. The problem which arises in strong spaces is the lack of a cancellation property for the 
nonlinear term. We overcome this difficulty and establish the Feller property by introducing a stopping time argument, combined with iterated weak moment bounds, and taking 
advantage of the parabolic smoothing effects inherent in the equation. 

We emphasize that {moment bounds} play an essential role in the existence, uniqueness, and regularity properties of statistically invariant states. As with the 
3D Navier-Stokes equations, in strong spaces there are no obvious cancellations in the nonlinear term, which precludes one from establishing suitable moment 
bounds in spaces that {compactly embed in $H^1$}. Indeed, the standard estimates give rise to cubic bounds for the nonlinearity, and thus the results in 
\cite{DGHTZ12} developed the analysis for the global existence of the stochastic Primitive Equations in a mostly {pathwise fashion}. Meanwhile, the recent 
paper~\cite{KukavicaVicol12} on the 2D Navier-Stokes equations in bounded domains, demonstrated how to establish {logarithmic type moment bounds in 
strong spaces}, where cancelations are unavailable. Using this insight we show in Theorem~\ref{thm:moments} below that {logarithmic} moments are available 
in $H^2$ (compactly embedded in $H^1$), which is sufficient to prove the {existence} of ergodic invariant measures supported on $H^1$. Moreover, these 
moment bounds a posteriori show that the invariant measures are in fact supported on $H^2$.

Since the estimates are technically involved, we present here only the analysis for the velocity part of the Primitive Equations.
The full Primitive Equations used in climate modeling account for physically fundamental rotational effects as well as thermodynamic 
and other important processes (salinity, moisture) responsible for density variations in the oceanic-atmospheric system. A more physical 
version of the equations will be addressed in~\cite{GHKVZ13b}.

\subsection{The stochastic Primitive Equations}
We consider the velocity part of the 3D stochastic Primitive Equations
\begin{align}
&d v + \left(-  \Delta v + v \cdot \grad v + w \dz v + \grad p \right) dt = \sigma(v) dW  \label{eq:P:1}\\
&\grad \cdot v + \dz w = 0\label{eq:P:2} \\
&\dz p = 0 \label{eq:P:3}
\end{align}
for the unknown velocity field $u=(v,w) = (v_1,v_2,w)$ and the pressure scalar $p$. Here the spatial variable $(x,z) = (x_1,x_2,z)$ belongs to 
$\OO :=\TT^2 \times (0,1)$. For simplicity of the presentation all the physical parameters (height, viscosity, size of periodic box) are set to $1$. 
We denote $\grad=(\partial_1,\partial_2)$, $\Lap = \partial_{11} + \partial_{22}$, $\nabla = (\partial_1,\partial_2,\dz)$, and $\Delta = \Lap + \partial_{zz}$. 
The stochastic terms in \eqref{eq:P:1} is understood as the formal expansion
\begin{align*}
\sigma(v) dW  = \sum_{k \geq 1} \sigma_{k}(v) dW^{k},
\end{align*}
where $W^{k}$ are a sequence of independent 1D Brownian motions
relative to some prescribed stochastic basis.  Natural compatibility,
boundedness, and Lipschitz conditions shall be imposed on $\sigma$.
We make these precise in \eqref{eq:sigma:cond:Lip} and \eqref{eq:sigma:cond:Bnd:L14}--\eqref{eq:sigma:cond:small} below.

We denote by $\Gamma_t = \TT^2 \times \{1\}$ the top and by $\Gamma_b = \TT^2 \times\{0\}$ the bottom boundary.
The results in this paper cover the case of Neumann boundary conditions (considered in \cite{CT}) associated to the system \eqref{eq:P:1}--\eqref{eq:P:3}, namely
\begin{align}
& w = 0 \quad \mbox{on} \quad \Gamma_b \cup \Gamma_t \label{eq:BC:1} \\
& \dz v = 0 \quad \mbox{on} \quad \Gamma_b \cup \Gamma_t \label{eq:BC:2} \\
& v(x,z) \mbox{ is } \TT^2\mbox{-periodic in } x \mbox{ for any } z\in(0,1). \label{eq:BC:3}
\end{align}
In view of the boundary condition for $w$ on $\Gamma_b$, the divergence-free nature of the velocity field implies
\begin{align}
w(x,z) = - \int_0^{z} \grad\cdot v(x,z') dz'. \label{eq:w}
\end{align}
The above identity for the diagnostic variable $w$ in terms of $v$ shall be used implicitly throughout.
Note that, the boundary condition for $w$ on $\Gamma_t$ implies  the  compatibility condition
\begin{align}
 \grad \cdot  Mv(x) = \grad \cdot \int_0^1v(x,z) dz = 0 \label{eq:compatibility},
\end{align}
where here and throughout the paper we denote by 
\begin{align}
Mf(x) = \int_{0}^{1} f(x,z)dz \label{eq:mean:def}
\end{align}
the vertical mean of a function $f(x,z)$.

We assume that the noise $\sigma(v)$ has zero mean on $\OO$, and as in~\cite{DGHTZ12},  that it obeys the condition
\begin{align}
\grad\cdot M \sigma(v) (x) = 0. \label{eq:sigma:cond}
\end{align}
Additional  boundedness and Lipschitz conditions are imposed on $\sigma$  in \eqref{eq:sigma:cond:Lip} below. Since under the imposed condition on 
$\sigma(v)$ the mean $\int_{\OO} v(x,z) dx dz$ is preserved by the evolution \eqref{eq:P:1}--\eqref{eq:P:3}, for simplicity of the presentation we consider 
initial data, and hence solutions, which have zero average over $\OO$. The proof given here may be adapted to the case of solutions whose average is 
not necessarily zero, by slightly adjusting the Gagliardo-Nirenberg inequalities to account for lower order terms.

Condition \eqref{eq:sigma:cond} on the noise implies that the pressure may be computed explicitly from
\begin{align}
-\Lap p = \grad\cdot \left( M (v \cdot \grad v) + M( w \dz v) \right)
\label{eq:p:equation}
\end{align}
and
\begin{align}
 \int_{\TT^2} p dx = 0. \label{eq:p:mean}
\end{align}
To prove \eqref{eq:p:equation}--\eqref{eq:p:mean}, we integrate \eqref{eq:P:1} in time, apply $\grad \cdot M$, and use the stochastic Fubini theorem (cf.~e.g.~\cite{ZabczykDaPrato1992}). Due to the periodic boundary condition \eqref{eq:BC:3} in $x$, it is direct to compute
\begin{align}
\grad p = {\mathcal R}_h M ( v\cdot \grad v + v \grad \cdot v) \label{eq:p}
\end{align}
where ${\mathcal R}_h$ is a composition of Riesz transforms that act in the two dimensional variable $x \in \TT^2$.

\begin{remark}[\bf Lateral boundary conditions]
As opposed to the periodic case considered here, in the presence of lateral boundary conditions one cannot appeal 
to the  explicit formula \eqref{eq:p}, and instead one has to use Sohr-von Wahl estimates to control the pressure. This 
in turn requires shifting the equations, which introduces a number of difficulties~\cite[Section 4]{DGHTZ12}. 
\end{remark}

\begin{remark}[\bf Vorticity formulation]
In the setting of this paper it will be convenient to use the evolution equation
\begin{align} 
d (\dz v) + \Bigl(v \cdot \grad (\dz v) + w \dz (\dz v) + (\dz v) \cdot \grad v - (\grad \cdot v) (\dz v)  - \Delta (\dz v)  \Bigr) dt =  \dz \sigma(v) dW  \label{eq:P:Vort}
\end{align}
obeyed by the vorticity $\dz v$, 
with the associated Dirichlet boundary conditions for $\dz v$ on $\Gamma_b \cup \Gamma_t$ (in view of \eqref{eq:BC:2}), and periodicity in the $x$-variable (in view of \eqref{eq:BC:3}). One may also refer to \eqref{eq:P:Vort} as the vorticity formulation of the 3D stochastic Primitive Equations.
\end{remark}

\subsection{Reformulation of the equations with periodic boundary conditions}

Inherent symmetries in the equations show that the solution of the Primitive Equations on $\TT^2 \times (0,1)$ with with boundary 
given by \eqref{eq:BC:1}--\eqref{eq:BC:3} may be recovered by solving the equations with periodic boundary conditions in both the $x$ and $z$ variables on the extended domain $\TT^2 \times (-1,1) =: \TT^3$, and restricting to $z\in(0,1)$. 

To see this, 
consider any solution of \eqref{eq:BC:1}--\eqref{eq:BC:3} with the Neumann boundary conditions on the top and bottom boundaries $\Gamma_b = \{z=0\}$, $\Gamma_{t} = \{z = 1\}$.  We perform an even extension of the solution across $\Gamma_b$
\begin{align*}
 v(x,z) = v(x,-z),  \quad \textrm{ for } (x,z) \in \TT^{2} \times (-1,0),
\end{align*}
which prescribes, with \eqref{eq:w}, an odd extension for $w(x, z) = -w(x,-z)$.  We also extend
$\sigma$ in an even fashion across $\Gamma_b$. To ensure the smoothness of the ensuing $\sigma$ across $\{z=0\}$, as in~\cite{DGHTZ12} we assume  
\begin{align}
\dz \sigma(v) = 0 \quad \mbox{on} \quad \Gamma_b \cup \Gamma_t.
\end{align}
In view of $\dz v = \dz \sigma(v)= 0$, on $\Gamma_b \cup \Gamma_t$, this extension keeps the solution sufficiently smooth in space (e.g. in $H^{2 + \eps}$ for $0 \leq \eps <1/2$).
Moreover, in view of the boundary condition $\dz v = 0$ on $\Gamma_t$, we have that $\dz v = 0$ on $\{z = -1\}$.
It is then not difficult to obtain that this extension of $v$ yields a solution of \eqref{eq:P:1}-- \eqref{eq:P:3} on
the domain $\TT^{2} \times (-1,1)$ but with periodic boundary conditions.

In view of the above remark, we henceforth consider the Primitive Equations on the extended domain $\TT^3 = \TT^2 \times (-1,1)$, with periodic boundary boundary conditions and with the compatibility condition
\begin{align*}
\grad \cdot M v = \grad \cdot \int_{-1}^1 v(x,z) dz = 0
\end{align*}
for all $x \in \TT^2$.  For initial data $v_{0}$ and noise $\sigma$ to the periodic equation that is even across $z = 0$, since 
the equation is invariant under the transformation $z \mapsto -z$ we may therefore recover any solution of \eqref{eq:P:1}-- \eqref{eq:P:3}
by taking this periodic solution and restricting it to $\TT^{2} \times (0,1)$.  As such we conclude that the periodic boundary conditions are in fact more general than the Neumann conditions used in previous works, cf.~e.g.~\cite{Petcu06,CT} and references therein.

\subsection{Well-posedness results for the stochastic 3D Primitive Equations}
Following the notation in~\cite{PetcuTemamZiane09,DGHTZ12} we consider the spaces
\begin{align*} 
H &= \left\{ v \in L^{2}(\TT^{3}) \colon \grad \cdot Mv = 0 \mbox{ in } \TT^{2}, v \mbox{ is } \TT^{3}\mbox{-periodic in $x$ and $z$}, \int_{\TT^{3}} v\, dx dz =0 \right\}
\end{align*}
and
\begin{align*} 
V &= H \cap H^{1}(\TT^3) 
\end{align*}
The Leray projection operator from $L^{2}(\TT^3)$ to $H$ is denoted by $\PP_{H}$, and the inner product on $H$ is denoted by $\la \cdot,\cdot \ra$. Lastly, in view of the periodic boundary conditions, the Stokes operator for the Primitive Equations
$
A = \PP_H (-\Delta)
$
is just the negative Laplacian when acting on functions in its domain
\begin{align*}
\DD(A) = H \cap H^2(\TT^3). 
\end{align*}

Let us now briefly recall some elements of infinite dimensional stochastic analysis needed in order to properly define the stochastic evolution equation \eqref{eq:P:1}--\eqref{eq:P:3}. We refer to~\cite{ZabczykDaPrato1992} for details regarding the general theory. Since we are working in the Markovian framework we consider only pathwise solutions of \eqref{eq:P:1}--\eqref{eq:P:3}, and as such we fix throughout this manuscript a stochastic basis $\mathcal{S} = \sbasis$, where $W$ is a cylindrical Brownian motion defined on an auxiliary separable Hilbert space $\UU$, adapted to the filtration $\{ {\mathcal F}_t\}_{t\geq 0}$.

Regarding assumptions on the state-dependent operator $\sigma$, we introduce the following commonly used notation. For two Banach spaces ${X}$, ${Y}$ we denote by $\Lip({X}, {Y})$ the set of {Lipschitz} continuous mappings, i.e., for $\Psi \in \Lip({X}, {Y})$, there exists $C>0$ such that 
\begin{align*}
\| \Psi(v_1) - \Psi(v_2) \|_{{Y}} \leq C \|v_1 - v_2 \|_{{X}}
\end{align*}
for any $v_1,v_2 \in {X}$. Note that in this case $\Psi \in \Lip({X},{Y})$ is thus also sublinear
\begin{align*}
\| \Psi(v)\|_{{Y}} \leq C (1 + \| v\|_{{X}})
\end{align*}
For a separable Hilbert space ${X}$ we denote by $L_2(\UU,{X})$ the space of Hilbert-Schmidt operators from $\UU$ to ${X}$. With the above introduced notation, we assume the noise term satisfies
\begin{align}
  \sigma \in \Lip(H;L_2(\UU,H))\cap \Lip(V;L_2(\UU,V)) \cap \Lip(D(A);L_2(\UU,D(A))),
  \label{eq:sigma:cond:Lip}
\end{align}
or, more concretely, that
\begin{align} 
\| \sigma(v_1) - \sigma(v_2) \|_{L_{2}(\UU,\DD(A^{j/2}))}^{2} \leq C \| v_1 - v_2\|_{\DD(A^{j/2})}^2 \label{eq:sigma:cond:Lip:quant}
\end{align}
holds for all $j \in \{0,1,2\}$.

The existence of  a unique strong pathwise (i.e., strong in both the probabilistic and the PDE sense) solution of \eqref{eq:P:1}--\eqref{eq:P:3} follows directly from~ \cite{DGT,DGHTZ12}.
More precisely, the following statement holds.
\begin{theorem}[\bf Existence and uniqueness of strong pathwise solutions]\label{thm:PathSol}
Fix a stochastic basis $\mathcal{S} = (\Omega$, $\mathcal{F}, \mathbb{P}, \{\mathcal{F}_{t}\}_{t \geq 0}, W)$ and an $\mathcal{F}_{0}$ measurable
random variable $v_{0} \colon \Omega \mapsto V$.  Moreover, assume that $\sigma$ obeys \eqref{eq:sigma:cond:Lip}. Then there exists a $V$-valued, predictable
process $v$ such that
\begin{align}
  v \in C([0,\infty),V) \cap L^{2}_{loc}((0,\infty), D(A)) \mbox{ a.s.,}
  \label{eq:ExReg}
\end{align}
and such that, for all $t \geq 0$
\begin{align}
  v(t) + \int_{0}^{t} \Bigl(- \Delta v + \Leray (v \cdot \grad v + w \dz v) \Bigr)ds = v_{0} + \int_{0}^{t} \sigma(v) dW
    \label{eq:ExEqn}
\end{align}
where $w$ is computed from $v$ via \eqref{eq:w}.
Furthermore, if 
$v$ and $\tilde{v}$ satisfy \eqref{eq:ExReg}--\eqref{eq:ExEqn}
and $v(0) = \tilde{v}(0)$ a.s., then
\begin{align*}
  \Prb( v(t) = \tilde{v}(t), \mbox{ for all } t \geq 0) = 1,
\end{align*}
that is, $v = v(t, v_{0})$ is pathwise unique.
\end{theorem}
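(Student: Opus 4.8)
The plan is to obtain the asserted process as a global-in-time extension of local strong pathwise solutions, in the spirit of \cite{DGT} for local existence and \cite{DGHTZ12} for the global bounds. First I would build local solutions via a Galerkin scheme on the extended periodic domain $\TT^3$: projecting \eqref{eq:P:1}--\eqref{eq:P:3} onto the span of the first $N$ eigenfunctions of the Stokes operator $A$ (with $w$ and $p$ reconstructed from $v$ through \eqref{eq:w} and \eqref{eq:p}) produces a finite-dimensional system of It\^o SDEs with locally Lipschitz coefficients, hence locally solvable. The uniform-in-$N$ estimates would be carried out at the $V$ level: apply It\^o's formula to $\|v_N\|_{L^2}^2 + \|\nabla v_N\|_{L^2}^2$, use the viscous term $\langle A v_N, A v_N\rangle$ for parabolic smoothing, control the noise contribution by the Lipschitz/sublinear bound \eqref{eq:sigma:cond:Lip:quant} for $j=0,1$, and bound the stochastic integral via Burkholder--Davis--Gundy. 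The delicate point is the nonlinear term $\langle v_N\cdot\grad v_N + w_N\dz v_N + \nabla p_N,\, A v_N\rangle$, which in three dimensions does not close by itself; here one invokes the anisotropic estimates of \cite{DGHTZ12}, exploiting \eqref{eq:w} and the effectively two-dimensional pressure \eqref{eq:p}, to dominate it by lower powers of the norms times $\|v_N\|_{D(A)}^2$ that are absorbed into the viscosity. At this stage the bounds hold only up to a sequence of stopping times.

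Next I would pass to the limit. On the time intervals on which the estimates are uniform, the laws of $v_N$ are tight on $C([0,T],H)\cap L^2(0,T;V)$ together with the weak topology of $L^2(0,T;D(A))$, using the Aubin--Lions--Simon lemma and fractional-in-time Sobolev bounds read off from the equation. By the Skorokhod representation theorem one gets, on a new basis, a.s.-convergent subsequences whose limit solves \eqref{eq:ExEqn} in the martingale (probabilistically weak) sense, with the regularity \eqref{eq:ExReg} up to a maximal time. To return to the prescribed basis $\mathcal{S}$, I would combine pathwise uniqueness (below) with the Gy\"ongy--Krylov characterization of convergence in probability, producing a local strong pathwise solution $v$.

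For uniqueness, take two solutions $v,\tilde v$ with $v(0)=\tilde v(0)$, set $r := v-\tilde v$, and apply It\^o's formula to $\|r\|_{L^2}^2$. Writing the nonlinear difference as $u_1\cdot\nabla r + (u_1-u_2)\cdot\nabla v_2$ with $u_i=(v_i,w_i)$ divergence free, the transport part cancels, $\langle v_1\cdot\grad r + w_1\dz r,\, r\rangle = 0$ by the divergence-free condition and the boundary conditions; what remains, namely $\langle r\cdot\grad \tilde v + \tilde w\,\dz r,\, r\rangle$ and the pressure difference from \eqref{eq:p}, is estimated by Ladyzhenskaya/Gagliardo--Nirenberg inequalities using the extra integrability $v,\tilde v\in L^2_{loc}((0,\infty),D(A))$. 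A stochastic Gr\"onwall argument, with the noise difference handled by \eqref{eq:sigma:cond:Lip:quant} for $j=0$, then gives $\Prb(v(t)=\tilde v(t)\ \text{for all } t\geq 0)=1$.

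Finally, global existence. To show the maximal existence time is a.s. infinite, I would invoke the a priori bounds of \cite{DGHTZ12}: a bootstrapping chain of stopping-time estimates controlling $\dz v$ in $L^p$ and in purely vertical-gradient anisotropic norms, then $\grad v$, then $\|v\|_V$ and $\|v\|_{D(A)}$, which shows $\|v(t)\|_V$ cannot blow up in finite time. The main obstacle is precisely this step: unlike the 2D Navier--Stokes equations, there is no cancellation in the $V$ estimate, so globality rests on the subtle two-dimensional structure of the pressure and the anisotropic machinery of \cite{DGHTZ12} --- which is why the statement is quoted from there rather than reproved here.
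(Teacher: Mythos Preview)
Your outline is sound and matches the strategy of the cited works, but note that the paper does not actually prove Theorem~\ref{thm:PathSol}: it is stated as a direct consequence of \cite{DGT,DGHTZ12} and no proof is given in the text. Your sketch --- Galerkin approximations with uniform $V$-level bounds, tightness plus Skorokhod/Gy\"ongy--Krylov to upgrade martingale to pathwise solutions, $L^2$-level uniqueness via stochastic Gr\"onwall, and globality via the anisotropic bootstrapping of \cite{DGHTZ12} --- is precisely the route taken in those references, so there is no discrepancy to discuss. One small slip: in your uniqueness paragraph the residual nonlinear term should read $\langle r\cdot\grad \tilde v + (w-\tilde w)\,\dz \tilde v,\, r\rangle$ rather than $\tilde w\,\dz r$, since the transport part $u_1\cdot\nabla r$ has already been split off and cancelled.
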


In addition to \eqref{eq:sigma:cond:Lip}, we assume that the noise term $\sigma(v)$ obeys 
\begin{align}
\sum_k \|\sigma_k (v)\|_{L^{14}}^2 \leq C (1 + \|v\|_{L^{14}}^2)
\label{eq:sigma:cond:Bnd:L14}
\end{align}
and
\begin{align} 
\sum_k \|\dz \sigma_k(v)\|_{L^6}^2 \leq C (1 + \|\dz v\|_{L^6}^2)
\label{eq:sigma:cond:Bnd:W1z6}
\end{align}
for any $v \in \DD(A)$, where $C$ is a positive constant.
Lastly, we assume that the following condition  on the operator $\sigma$ holds:
 there exist constants $0 < \eps_\sigma  \ll 1$ and $C_\sigma > 0$, such that 
\begin{align} 
\| \sigma (v) \|_{L_2(\UU,L^2)}^2 \leq C_\sigma + \eps_\sigma \|v \|_{L^2}^2 
\label{eq:sigma:cond:small}
\end{align} 
for any $v \in H$. 
For instance, additive noise structures of the form 
\begin{align}
\sigma(v) dW = \sum_k \sigma_k(x) dW_k(t) \label{eq:additive:noise}
\end{align}
with $\{\sigma_k\}_{k \geq 1} \in L_2(\UU,L^2)$ of zero mean obeys condition \eqref{eq:sigma:cond:small}.

\begin{remark}
We emphasize that we do not consider condition \eqref{eq:sigma:cond:small} to be restrictive, but the contrary, they include the class of noise structures under which the existence of invariant measure is typically proven. Indeed the uniqueness of invariant measures is usually proven only with additive noise. 
It is more difficult to obtain uniqueness when the noise is multiplicative, and in principle this requires some sort of ``semi-definite condition'' on $\sigma$.  To the best of our knowledge, there are only a few scant works on multiplicative noise structures using ``coupling methods''; cf.~e.g.~\cite{Odasso06b,DO}.
\end{remark}

\subsection{The Markov semigroup and invariant measures}
We use the notation $v(t,v_0)$ to denote the unique pathwise strong solution of 
the Cauchy problem associated to \eqref{eq:P:1}--\eqref{eq:BC:3} with the initial data $v_0 \in L^2(\Omega;V)$. For a set $B \in {\mathcal B}(V)$, where ${\mathcal B}(V)$ denotes the family of Borel subsets of $V$, we define the  {transition functions}
\begin{align*}
	P_{t}(v_0, B) = \Prb( v(t, v_0) \in B)
\end{align*}
for any $t \geq 0$. Let $C_b(V)$ and $M_b(V)$ be the set of all real valued bounded
continuous , respectively bounded, Borel measurable functions on $V$.  For $t \geq 0$, define
the  ph{Markov semigroup}
\begin{align}
	P_t \phi(v_0) = \EE \phi( v(t, v_0)) = \int_{V} \phi(v) P_{t}(v_{0}, dv) \label{eq:Pt:def}
\end{align}
which maps $M_b(V)$ into itself. In the deterministic case, $v(t,v_0)$ depends continuously on the initial data $v_0$ in the topology of $V$. Correspondingly, in the  stochastic case we are able to prove the following statement.
\begin{theorem}[\bf Feller property]
\label{thm:Feller}
 The Markov semigroup $P_t$ associated to the 3D stochastic Primitive Equations is {Feller} on $V$, that is $P_t$ maps $C_b(V)$ into itself.
\end{theorem}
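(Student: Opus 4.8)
The plan is to deduce the Feller property from a \emph{continuous dependence on the initial data, in probability}: I will show that whenever $v_0^n \to v_0$ in $V$, the corresponding strong solutions from Theorem~\ref{thm:PathSol} satisfy $v(t,v_0^n) \to v(t,v_0)$ in probability in $V$, for every fixed $t \ge 0$. Granting this, for $\phi \in C_b(V)$ the random variables $\phi(v(t,v_0^n))$ converge to $\phi(v(t,v_0))$ in probability and are uniformly bounded, hence by the bounded convergence theorem $P_t\phi(v_0^n) = \EE\, \phi(v(t,v_0^n)) \to \EE\, \phi(v(t,v_0)) = P_t\phi(v_0)$, which is the assertion.

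To prove the convergence in probability I work on a single stochastic basis and write $v^n = v(\cdot,v_0^n)$, $v = v(\cdot,v_0)$, both driven by the same $W$, and set $\rho := v^n - v$. Subtracting two copies of \eqref{eq:ExEqn}, and eliminating the pressure difference via the explicit formula \eqref{eq:p} and the vertical velocities via \eqref{eq:w}, one obtains a closed evolution equation for $\rho$ (whose $\dz$-component may equivalently be read off from the vorticity form \eqref{eq:P:Vort}), with $\rho(0) = v_0^n - v_0$ and stochastic forcing $\sigma(v^n) - \sigma(v)$. Fix $T \ge t$ and a small parameter $\delta > 0$, and introduce the two stopping times
\begin{align*}
\tau_R := \inf\Bigl\{ s \ge 0 : \|v(s)\|_V^2 + \int_0^s \|v(s')\|_{\DD(A)}^2\, ds' \ge R \Bigr\}, \qquad \theta_\delta^n := \inf\{ s \ge 0 : \|\rho(s)\|_V \ge \delta \}.
\end{align*}
The stopping time $\tau_R$ depends only on the \emph{limit} solution $v$; since by \eqref{eq:ExReg} one has $v \in C([0,\infty),V) \cap L^2_{loc}((0,\infty),\DD(A))$ almost surely, $\tau_R \uparrow \infty$ a.s.\ as $R \to \infty$. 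On $[0, T \wedge \tau_R \wedge \theta_\delta^n]$ one controls both $\sup_s \|v(s)\|_V^2$ and $\int_0^s \|v\|_{\DD(A)}^2\, ds'$ by $R$, and $\|\rho\|_V \le \delta$, so in particular $\|v^n\|_V \le \|v\|_V + \delta$.

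On the stochastic interval $[0, T \wedge \tau_R \wedge \theta_\delta^n]$ I run an energy estimate for $\rho$ in $V$, gaining $\DD(A)$-regularity from the dissipation (testing the $\rho$-equation against $\rho$ and against $A\rho$, supplemented by an estimate of $\dz\rho$ via \eqref{eq:P:Vort}). Since the $V$-energy balance for the nonlinearity carries no useful cancellation, every top-order term is estimated by Gagliardo--Nirenberg/Agmon interpolation between $V$ and $\DD(A)$ and absorbed into a fraction of $\|\rho\|_{\DD(A)}^2$ on the left-hand side; the borderline terms are closed using the smallness of $\delta$, the remaining ones using the $L^2_t\DD(A)$ control of the reference solution valid up to $\tau_R$, and the pressure difference is handled the same way through \eqref{eq:p}. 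The It\^o correction and the stochastic integral are estimated with the Lipschitz hypothesis \eqref{eq:sigma:cond:Lip:quant} (for $j = 0,1$), which gives $\|\sigma(v^n) - \sigma(v)\|_{L_2(\UU,V)}^2 \le C\|\rho\|_V^2$, together with the Burkholder--Davis--Gundy inequality after a supremum in time. The outcome is a differential inequality $\frac{d}{ds}\|\rho\|_V^2 + \tfrac12\|\rho\|_{\DD(A)}^2 \le g(s)\|\rho\|_V^2 + dM_s$ on $[0, T \wedge \tau_R \wedge \theta_\delta^n]$, with $M$ a martingale and $\int_0^{T \wedge \tau_R \wedge \theta_\delta^n} g(s)\, ds \le C(R,T)$ \emph{deterministically}; a stochastic Gr\"onwall argument together with Burkholder--Davis--Gundy then yields
\begin{align*}
\EE\, \sup_{0 \le s \le T \wedge \tau_R \wedge \theta_\delta^n} \|\rho(s)\|_V^2 \le C(R,T)\, \|v_0^n - v_0\|_V^2.
\end{align*}

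Finally, fix $\eta \in (0,\delta)$ and abbreviate $\tau^* := T \wedge \tau_R \wedge \theta_\delta^n$. Since $\|\rho\|_V$ equals $\delta > \eta$ at $\theta_\delta^n$ on $\{\theta_\delta^n < T\}$ (by continuity of $s\mapsto\|\rho(s)\|_V$), the event $\{\|\rho(t)\|_V > \eta\} \cap \{\tau_R \ge T\}$ is contained in $\{\sup_{[0,\tau^*]}\|\rho\|_V > \eta\}$, and Chebyshev's inequality gives
\begin{align*}
\Prb\bigl( \|v(t,v_0^n) - v(t,v_0)\|_V > \eta \bigr) \le \Prb(\tau_R < T) + \eta^{-2}\, C(R,T)\, \|v_0^n - v_0\|_V^2 .
\end{align*}
Given $\eps > 0$, I pick $R$ with $\Prb(\tau_R < T) < \eps/2$ (possible since $\tau_R \uparrow \infty$ a.s.\ and this probability does not depend on $n$), and then $N$ so that $\eta^{-2}C(R,T)\|v_0^n - v_0\|_V^2 < \eps/2$ for all $n > N$; this proves convergence in probability and hence the theorem. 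The main obstacle is the difference estimate of the previous paragraph: unlike the $2$D Navier--Stokes case there is no $L^2$-type cancellation in $V$, so one must rely entirely on the parabolic smoothing, on the smallness window $\|\rho\|_V \le \delta$, and on the $L^2_t\DD(A)$ bound for the reference solution $v$ — keeping the Gr\"onwall coefficient $g$ integrable (it cannot be taken proportional to $\|v^n\|_{\DD(A)}^2$ alone, which is not known to be integrable in $\omega$) is precisely what forces the stopping time $\theta_\delta^n$ and the use of the special structure \eqref{eq:p}, \eqref{eq:P:Vort} of the Primitive Equations.
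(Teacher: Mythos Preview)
Your argument is correct, and it takes a genuinely different route from the paper's.

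The paper's proof of Theorem~\ref{thm:Feller} relies on the continuous dependence estimate of Lemma~\ref{lem:cont:dependence:V}, in which the stopping time $\tau_\kappa(v_0^{(1)},v_0^{(2)})$ involves the $H^1$--$H^2$ norms of \emph{both} solutions symmetrically. Controlling $\Prb(\tau_\kappa < t)$ then requires the entire machinery of Section~\ref{sec:weak:moment} (the iterated logarithmic moment bounds of Lemmas~\ref{lem:energy:moment}--\ref{lem:rho:lambda}); moreover, because the resulting estimate is only in expectation, the passage from $C_b(V)$ to Lipschitz test functions is made via the parabolic-smoothing stopping time $\rho_\lambda$ and the approximation on compact $H^2$-balls, cf.~\eqref{eq:split:Lip:approx1}--\eqref{eq:Feller:17}.

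You sidestep all of this by choosing an asymmetric decomposition of the nonlinearity so that the Gr\"onwall coefficient $g$ involves only $\|v\|_V$ and $\|v\|_{\DD(A)}$ of the \emph{limit} solution, plus terms quadratic in $\rho$ that are absorbed via the smallness $\|\rho\|_V\le\delta$ enforced by $\theta_\delta^n$. Since $\tau_R$ then depends only on $v$, the convergence $\Prb(\tau_R<T)\to 0$ is immediate from \eqref{eq:ExReg}, and no moment bounds are needed; convergence in probability suffices to conclude via bounded convergence, so the Lipschitz approximation and the $\rho_\lambda$ argument are also unnecessary. The delicate point is that the vertical-advection difference term $\la w_\rho\,\dz\rho,\Delta\rho\ra$ is borderline (it scales like $\|\rho\|_V\|\rho\|_{\DD(A)}^2$), which is exactly why $\theta_\delta^n$ is required; you identify this correctly.

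What each approach buys: yours is more elementary and self-contained, showing that the Feller property for the Primitive Equations does not actually hinge on any moment estimates. The paper's approach is more involved, but the moment bounds it develops in Section~\ref{sec:weak:moment} are reused in Sections~\ref{sec:existence}--\ref{sec:higher:regularity} for the existence and regularity of invariant measures, and the authors advertise the stopping-time/parabolic-smoothing/Lipschitz-approximation technique as being of independent interest for other SPDEs where well-posedness and cancellations live in different spaces.
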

The proof of the Feller property is given in Section~\ref{sec:Feller} below. We emphasize here that this property does not follow directly from continuous dependence estimates and the Dominated Convergence theorem, as is standard for e.g. the 2D Navier-Stokes equations. Since for the 3D Primitive Equations we have to work in the phase space $V= H \cap H^1$, where continuous dependence may be proven, certain cancellations in the nonlinear term are absent. Indeed, the continuous dependence on data estimate of~Lemma~\ref{lem:cont:dependence:V} sees the norms of both solutions. In such ``strong norms'' this would be the case even for additive noise~\cite{CGHV13}. We overcome this difficulty by using a delicate stopping time argument and a careful iterated use of moment bounds for the equation. At this stage we also need to appeal to the inherent parabolic smoothing in the equations, via a Ladyzhenskaya-type estimate in the proof of Lemma~\ref{lem:moments:local:H2} below.  This argument of combining moment bounds with stopping time arguments and parabolic smoothing in order to establish the Feller property addresses a technical challenge present also in other SPDE, where there is a mismatch between spaces with cancelations are available, and spaces where the equations are well-posed (in the sense of Hadamard). As such the technique developed here is of independent interest.

We conclude this subsection with the definition of an invariant measure. Let $\Pr(V)$ be the set of Borealian probability measures on $V$.  An element
$\mu \in \Pr(V)$ is called an {invariant measure} for the Feller Markov semigroup associated to \eqref{eq:P:1}--\eqref{eq:BC:3} if
\begin{align}
  \int_{V} \phi(v_0) d \mu(v_0) = \int_{V} P_t  \phi(v_0) d \mu(v_0) \label{eq:invariant:def}
\end{align}
for every $t \geq 0$. In other words, $\mu$ is a fixed point for the dual semigroup $P_t^\ast$ for $t\geq 0$.

\subsection{Main results} \label{sec:intro:primitive}
The purpose of this paper is to prove the existence of an invariant measure for the three-dimensional stochastic Primitive Equations and to show that this measure is supported on functions that have regularity better than $H^2$.

\begin{theorem}[\bf Existence of an ergodic invariant measure] \label{thm:existence}
Assume that $\sigma$ obeys \eqref{eq:sigma:cond:Lip}, \eqref{eq:sigma:cond:Bnd:L14}, \eqref{eq:sigma:cond:Bnd:W1z6}, and  \eqref{eq:sigma:cond:small}.
Then, there exists an ergodic invariant measure $\mu \in \Pr(V)$ for the Feller Markov semigroup $P_t$ associated to the 3D stochastic Primitive Equations \eqref{eq:P:1}--\eqref{eq:BC:3}.
\end{theorem}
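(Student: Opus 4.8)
The plan is to produce the invariant measure by the classical Krylov--Bogolyubov averaging procedure, and then to extract an \emph{ergodic} one from the set of all invariant measures via the Krein--Milman theorem. All of the genuinely analytic input is already isolated in earlier statements: the global well-posedness of the Markov semigroup $P_t$ is Theorem~\ref{thm:PathSol}, the Feller property on $V$ is Theorem~\ref{thm:Feller}, and the logarithmic moment bounds in $\DD(A) = H \cap H^2(\TT^3)$ are supplied by Theorem~\ref{thm:moments}; what remains is therefore ``soft''.

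First I would fix a deterministic datum $v_0 \in \DD(A)$, say $v_0 = 0$, and, for $T \geq 1$, introduce the averaged measures
\begin{align*}
\mu_T(\cdot) = \frac1T \int_0^T P_t(v_0, \cdot)\, dt \in \Pr(V),
\end{align*}
and show that $\{\mu_T\}_{T \geq 1}$ is tight on $V$. Since $\DD(A)$ embeds compactly into $V$ by Rellich's theorem, the balls $B_R = \{ v \in V : \|v\|_{\DD(A)} \leq R \}$ are precompact in $V$. By the logarithmic moment bound of Theorem~\ref{thm:moments} there is a constant $C_0 = C_0(v_0)$ such that
\begin{align*}
\frac1T \int_0^T \EE \log\bigl(e + \|v(t,v_0)\|_{\DD(A)}^2\bigr)\, dt \leq C_0 \qquad \text{for all } T \geq 1 ,
\end{align*}
so Chebyshev's inequality gives $\mu_T(V \setminus B_R) \leq C_0 / \log(e + R^2) \to 0$ as $R \to \infty$, uniformly in $T$. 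It is precisely here that the merely logarithmic growth of the moment bound --- forced by the absence of a cancellation for the nonlinearity in the strong norm --- causes no harm: tightness only requires the mass at infinity in the $\DD(A)$-norm to vanish, not at any particular rate.

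By Prokhorov's theorem there are $T_n \to \infty$ and $\mu \in \Pr(V)$ with $\mu_{T_n} \rightharpoonup \mu$ weakly on $V$. For $\phi \in C_b(V)$ and $t \geq 0$, the Feller property gives $P_t \phi \in C_b(V)$, hence $\int_V P_t\phi\, d\mu = \lim_n \int_V P_t\phi\, d\mu_{T_n}$, while the semigroup property gives
\begin{align*}
\int_V P_t\phi\, d\mu_{T_n} = \frac1{T_n}\int_0^{T_n} P_{s+t}\phi(v_0)\, ds = \int_V \phi\, d\mu_{T_n} + \frac1{T_n}\left( \int_{T_n}^{T_n+t} - \int_0^t \right) P_s\phi(v_0)\, ds ,
\end{align*}
whose error term is bounded by $2t \sup_{v\in V}|\phi(v)|/T_n \to 0$; letting $n \to \infty$ shows $\mu$ satisfies \eqref{eq:invariant:def}, so the set $\mathcal{I} \subset \Pr(V)$ of invariant measures is nonempty. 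It is convex and, by the Feller property, weakly closed. To see it is compact it suffices to establish uniform tightness: from \eqref{eq:sigma:cond:small} one obtains an $H$-moment bound $\sup_{t \geq 0}\EE\|v(t,v_0)\|_{L^2}^2 \leq C(1 + \|v_0\|_{L^2}^2)$, which integrated against any $\mu \in \mathcal{I}$ and combined with invariance yields $\int_V \|v\|_{L^2}^2\, d\mu \leq C$ with $C$ independent of $\mu$; inserting this into the logarithmic moment bound of Theorem~\ref{thm:moments} (again integrated against $\mu$ and combined with invariance, after a routine truncation to pass the nonnegative quantity $\log(e + \|v\|_{\DD(A)}^2)$ through monotone convergence) gives $\sup_{\mu \in \mathcal{I}} \int_V \log(e + \|v\|_{\DD(A)}^2)\, d\mu < \infty$, whence uniform tightness by Chebyshev. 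Thus $\mathcal{I}$ is a nonempty compact convex subset of the space of finite signed Borel measures on $V$ with the weak topology, so by the Krein--Milman theorem it has an extreme point $\mu_\ast$; by the classical characterization of ergodic measures as the extreme points of $\mathcal{I}$ (see, e.g., \cite{ZabczykDaPrato1992}), $\mu_\ast$ is the desired ergodic invariant measure.

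I expect the only delicate point internal to this argument to be matching the strength of the available moment bound to what the two compactness steps demand --- the tightness of $\{\mu_T\}$, and the uniform-over-$\mathcal{I}$ bound needed for Krein--Milman; the real difficulties, namely the Feller property and the moment bounds themselves, will already have been dispatched in Theorems~\ref{thm:Feller} and~\ref{thm:moments}.
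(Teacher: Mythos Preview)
Your proposal is correct and follows essentially the same route as the paper: Krylov--Bogolyubov with $v_0=0$, tightness of $\{\mu_T\}$ in $V$ via the logarithmic $\DD(A)$-moment bound of Theorem~\ref{thm:moments} and Chebyshev, Prokhorov plus Feller to produce an invariant measure, and then Krein--Milman on the convex, closed, tight set $\mathcal{I}$ to extract an ergodic extreme point. The only place where you are slightly more explicit than the paper is in the uniform-over-$\mathcal{I}$ tightness step; note that your sentence ``integrated against any $\mu\in\mathcal{I}$ and combined with invariance yields $\int_V\|v\|_{L^2}^2\,d\mu\le C$'' needs the \emph{contractive} form of the energy estimate (i.e.\ $\EE\|v(t,v_0)\|_{L^2}^2\le e^{-\alpha t}\|v_0\|_{L^2}^2+C$, which does follow from \eqref{eq:sigma:cond:small} with $\eps_\sigma$ small) together with the truncation you already flag, since the naive inequality $\int\|v\|^2\,d\mu\le C(1+\int\|v\|^2\,d\mu)$ is vacuous.
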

The next statement shows that the support of any such invariant measure lies on $H^2$-smooth functions.
\begin{theorem}[\bf Regularity of invariant measures] \label{thm:regularity}
Under the assumptions, of Theorem~\ref{thm:existence}, let $\mu \in \Pr(V)$ be any invariant measure for the 3D stochastic Primitive Equations. Then we have 
\begin{align*}
\int_{V} \log\Bigl(1 + \| \nabla (|v|^{7}) \|_{L^{2}}^{2} + \| \nabla (|\dz v|^3)\|_{L^2}^2  + \| \Delta v\|_{L^2}^2 \Bigr) d \mu (v) < \infty
\end{align*}
holds. In particular, any invariant measure $\mu$ is supported on $\DD(A) = H^2 \cap H$.
\end{theorem}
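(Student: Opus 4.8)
The plan is to use the invariance identity~\eqref{eq:invariant:def} together with uniform-in-time moment bounds for the solution semigroup to transfer integrability of the logarithmic quantity $\Phi(v) := \log(1 + \|\nabla(|v|^7)\|_{L^2}^2 + \|\nabla(|\dz v|^3)\|_{L^2}^2 + \|\Delta v\|_{L^2}^2)$ from the dynamics to any invariant measure. The key analytic input, which I expect to be supplied by Theorem~\ref{thm:moments} (the logarithmic moment bounds in $H^2$) and its underlying lemmas (the $L^{14}$ and $W^{1,6}_z$ estimates that feed condition~\eqref{eq:sigma:cond:Bnd:L14}--\eqref{eq:sigma:cond:Bnd:W1z6}), is a bound of the Krylov--Bogoliubov type: for every $v_0 \in V$ and every $T>0$,
\begin{align}
\frac{1}{T} \EE \int_0^T \Phi(v(s,v_0))\, ds \leq C\bigl(1 + \log(1 + \|v_0\|_V^2)\bigr) + \frac{C}{T}\bigl(1 + \|v_0\|_V^2\bigr), \label{eq:reg:KB}
\end{align}
or some variant thereof with the right-hand side integrable against $\mu$. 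The point is that the cubic nonlinearity destroys genuine $H^2$ moment bounds, but after taking a logarithm the nonlinear contributions are tamed, exactly as in~\cite{KukavicaVicol12}; the time-averaged $H^2$-type norm appears naturally because $\|\Delta v\|_{L^2}^2$ (and the companion $L^p$-based quantities) is controlled only in $L^1_t$, via parabolic smoothing, not in $L^\infty_t$.

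The steps I would carry out are as follows. First I would record that, since $\sigma$ satisfies~\eqref{eq:sigma:cond:small}, the solution obeys the basic $L^2$ bound $\sup_{t\geq 0}\EE\|v(t,v_0)\|_{L^2}^2 \leq \|v_0\|_{L^2}^2 e^{-\delta t} + C$ for some $\delta>0$, and hence $\int_V \|v_0\|_{L^2}^2\, d\mu(v_0) < \infty$ for any invariant $\mu$ (a standard argument: integrate the bound against $\mu$, use invariance, and let the exponential term be absorbed; alternatively use Fatou on a truncation). I would then bootstrap through the $V$-level bound in the spirit of Theorem~\ref{thm:moments} to conclude $\int_V \log(1+\|v_0\|_V^2)\, d\mu(v_0) < \infty$ — or more directly, establish finiteness of $\int_V \Phi\, d\mu$ in one stroke. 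Second, fix a truncation: for $N \in \NN$ let $\Phi_N := \min\{\Phi, N\}$, which is bounded and, by the Feller-type continuity, measurable (indeed $\Phi$ is lower semicontinuous on $V$ by Fatou applied along mollifications, so $\Phi_N \in M_b(V)$ suffices). Third, apply invariance~\eqref{eq:invariant:def} in the time-averaged form: for any $T>0$,
\begin{align}
\int_V \Phi_N(v_0)\, d\mu(v_0) = \int_V \frac{1}{T}\int_0^T P_s \Phi_N(v_0)\, ds\, d\mu(v_0) \leq \int_V \frac{1}{T}\EE\int_0^T \Phi(v(s,v_0))\, ds\, d\mu(v_0), \label{eq:reg:invariance}
\end{align}
then insert~\eqref{eq:reg:KB} and let $T \to \infty$ to kill the $\|v_0\|_V^2/T$ term, obtaining $\int_V \Phi_N\, d\mu \leq C(1 + \int_V \log(1+\|v_0\|_V^2)\, d\mu(v_0))$ uniformly in $N$. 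Fourth, let $N \to \infty$ and invoke monotone convergence to conclude $\int_V \Phi\, d\mu < \infty$. Finally, since $\Phi(v) < \infty$ forces in particular $\|\Delta v\|_{L^2} < \infty$, and $\mu$ is already supported on $V \subset H$, we get $\mu(\DD(A)) = 1$.

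The main obstacle is establishing~\eqref{eq:reg:KB} with a right-hand side that is genuinely $\mu$-integrable — i.e., controlling the time-averaged logarithmic $H^2$ quantity by (at worst) a logarithmic function of $\|v_0\|_V$ plus an $O(1/T)$ remainder. This is precisely where the interplay of the iterated weak moment bounds, the stopping-time/parabolic-smoothing argument behind Lemma~\ref{lem:moments:local:H2}, and the structural assumptions~\eqref{eq:sigma:cond:Bnd:L14}--\eqref{eq:sigma:cond:small} on the noise must be combined, and I expect most of the technical weight of the proof to live in assembling that estimate from Theorem~\ref{thm:moments}. A secondary, more routine, subtlety is the measurability and lower semicontinuity of $\Phi$ on $V$, needed to justify the truncation and the monotone passage to the limit; this should follow from approximation by mollification together with weak lower semicontinuity of the relevant norms. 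Once~\eqref{eq:reg:KB} is in hand, the transfer to $\mu$ via~\eqref{eq:reg:invariance} and the two limiting arguments are standard.
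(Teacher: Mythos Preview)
Your overall strategy is sound, but there is a genuine gap in the key estimate \eqref{eq:reg:KB} as you have written it. The strong moment bound of Theorem~\ref{thm:moments} does \emph{not} give a right-hand side controlled by $\|v_0\|_V$: the bound \eqref{eq:MAIN} reads
\[
\EE \int_0^T \Phi(v(s,v_0))\,ds \leq C\log\bigl(1+ \|v_0\|_{L^{14}}^{14} + \| \dz v_0 \|_{L^6}^6 + \| \nabla v_0 \|_{L^2}^2\bigr) + C\|v_0\|_{L^2}^2 + C T,
\]
and in three dimensions neither $\|v_0\|_{L^{14}}$ nor $\|\dz v_0\|_{L^6}$ is controlled by $\|v_0\|_{H^1}$ (Sobolev gives only $H^1 \hookrightarrow L^6$ and $\dz v_0 \in L^2$). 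So for a generic $v_0 \in V$ the right-hand side may be infinite, and you cannot integrate it against $\mu$, which a priori lives only on $V$.

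The paper resolves this with a two-stage bootstrap that your outline does not capture. First, using only Lemma~\ref{lem:Y:moment} (whose right-hand side genuinely depends only on $\|v_0\|_{H^1}$), one shows the intermediate regularity \eqref{eq:first:higher:moment}, which implies $\mu$ is supported on the space $\mathcal{Y} = \{v \in V : \nabla(|v|^3)\in L^2,\ \nabla\dz v \in L^2\}$; by Sobolev this forces $v \in L^{18} \subset L^{14}$ and $\dz v \in L^6$. Only then, in a second pass, can Theorem~\ref{thm:moments} be invoked for $\mu$-a.e.\ $v_0$, and the right-hand side of \eqref{eq:MAIN} is now finite on the support of $\mu$. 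Your phrase ``bootstrap through the $V$-level bound'' gestures at this but does not identify the intermediate space or the fact that a separate (weaker) moment estimate is required to reach it.

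A secondary difference: your transfer argument integrates the moment bound directly against $\mu$ and sends $T\to\infty$, which presupposes $\mu$-integrability of the full right-hand side. The paper instead splits $V = B(\rho) \cup B(\rho)^c$ in the appropriate space, bounds the inner piece by $C(1+\rho/T)$ and the outer piece by $R\,\mu(B(\rho)^c)$, and chooses $\rho$ then $T$ large; this requires only that $\mu(B(\rho)^c)\to 0$, which is automatic. This is more robust and sidesteps having to first prove $\int \|v_0\|_V^2\,d\mu < \infty$ or the analogous $\mathcal{Y}$-integrability. The paper also uses Galerkin truncations $P_n$ in defining $f_{n,R}$, $g_{n,R}$ so that the truncated test functions are manifestly continuous on $V$, making the invariance identity and Fatou step cleaner than a bare lower-semicontinuity argument.
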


Usually, e.g.~for the stochastic 2D Navier-Stokes equations, proving the existence of invariant measures is direct:  From the energy inequality we obtain {moments} for both velocity in $L^2$ and vorticity in $L^2$, and since $H^1 \subset L^2$ is compact, the standard Kryloff-Bogoliubov procedure yields the existence of an invariant measure (see, e.g.~\cite{Debussche2011a,KuksinShirikian12}). The key here is that moments are available for the solution in spaces that compactly embed in a space where the equations depend continuously on the data (the Markov semigroup is Feller in $L^2$ for 2D Navier-Stokes).

The situation for the stochastic 3D Primitive Equations is much more complicated. As discussed in Theorem~\ref{thm:Feller} and the paragraph below it,  the Feller property is expected to hold only on $V$. The main difficulty which arises is that the moment bounds which lie at the heart of the existence theory, until this work were not available in spaces that compactly embed in $V$ (note that the moment bounds obtained in Section~\ref{sec:Feller} are for spaces that are larger than $V$). The issue here is that one has to estimate the equations in strong spaces, e.g.~in $V$, and hence the cancellation property for the nonlinear term (which was available in $H$) is not anymore available. We overcome the difficulty of establishing ``strong moments'' for the equation, using an idea recently used in~\cite{KukavicaVicol12} to obtain moment bounds for the stochastic 2D Navier-Stokes equations on a bounded domain. We show in Theorem~\ref{thm:moments} below that {logarithmic} moments are available in $H^2$. These logarithmic strong moments permit us to treat the cubic nonlinear term, and they are also sufficient to prove the {existence of ergodic invariant measures} supported in $V$, since $H^2 \subset V$ is compact. Moreover, the moments a posteriori show that the invariant measures are supported in fact on $H^2 \cap H$, and the estimate in Theorem~\ref{thm:regularity} holds.

\begin{remark}[\bf Uniqueness of the invariant measure]
In the context of the 2D Navier-Stokes equations, using quite delicate arguments one may prove that the invariant measure is unique, even under quite degenerate noise structures; cf.~\cite{FlandoliMaslowski1, ZabczykDaPrato1996,
Mattingly1,EMattinglySinai01,Mattingly2, BricmontKupiainenLefevere2001, KuksinShirikyan1,KuksinShirikyan2, MasmoudiYoung02,Mattingly03,MattinglyPardoux1,
HairerMattingly06, ChueshovKuksin08, Kupiainen10, HairerMattingly2011, Debussche2011a, KuksinShirikian12, FoldesGlattHoltzRichardsThomann2013} and references therein. 
The uniqueness of the invariant measure for the full 2D stochastic Primitive Equations (including temperature, gravity, and rotational effects) with more physically realistic boundary conditions will be  addressed in a forthcoming work~\cite{GHKVZ13b}.
\end{remark}

In the next statement we assert the higher regularity of the invariant measures.

\begin{theorem}[\bf Higher regularity for support of invariant measures] \label{thm:higher:regularity}
Let $\mu \in \Pr(V)$ be an invariant measure as in Theorem~\ref{thm:regularity}, and let $\eps \in (0,1/42]$. Then we have
\begin{align*}
\int_V \log \Bigl(1 + \|v\|_{H^{2+\eps}}^2 \Bigr) d\mu(v) < \infty
\end{align*}
and thus $\mu$ is supported on $H^{2+\eps} \cap H$.
\end{theorem}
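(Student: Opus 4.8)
The plan is to bootstrap from the $H^2$ regularity already established in Theorem~\ref{thm:regularity} to the slightly higher $H^{2+\eps}$ regularity, exploiting the parabolic smoothing of the Stokes operator together with the logarithmic moment bounds. I would work with the vorticity formulation \eqref{eq:P:Vort} for $\dz v$ (whose leading part is just the heat equation with Dirichlet data in $z$) and the equation for $v$ itself, testing against a fractional power of $A$. The heuristic is that if $v \in \DD(A)$ and the noise obeys \eqref{eq:sigma:cond:Lip} in $\DD(A)$, then applying $A^{1/2 + \eps/2}$ to \eqref{eq:ExEqn} and performing an energy estimate produces, on the left, the dissipative term $\|A^{1 + \eps/2} v\|_{L^2}^2$, while the nonlinear term $\Leray(v \cdot \grad v + w \dz v)$ must be controlled in $\DD(A^{\eps/2})$ by $\|v\|_{\DD(A)}$ and lower-order norms using Gagliardo-Nirenberg and the Sobolev product estimates in three dimensions; the constraint $\eps \le 1/42$ reflects exactly the margin available in those interpolation inequalities (the exponents $7$, $3$, $6$, $14$ appearing throughout the paper are the relevant ones). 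The outcome should be a differential inequality of the form $d \|A^{1/2 + \eps/2} v\|_{L^2}^2 + \|A^{1 + \eps/2} v\|_{L^2}^2 \, dt \le \big( P(\|v\|_{\DD(A)}) \|A^{1/2 + \eps/2} v\|_{L^2}^2 + Q(\|v\|_{\DD(A)}) \big) dt + dM_t$ for suitable polynomials $P, Q$ in the $H^2$ and $W^{1,6}$, $L^{14}$ norms that already carry logarithmic moments.

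Next I would run the now-standard logarithmic trick from \cite{KukavicaVicol12} used already in Theorem~\ref{thm:moments}: apply It\^o's formula to $\log(1 + \|A^{1/2+\eps/2} v\|_{L^2}^2)$, so that the problematic prefactor $P(\|v\|_{\DD(A)}) \|A^{1/2+\eps/2} v\|_{L^2}^2$ becomes, after dividing, merely $P(\|v\|_{\DD(A)})$, which is time-integrable in expectation against the invariant measure by Theorem~\ref{thm:regularity} (more precisely one needs $\EE^\mu \int_0^1 P(\|v(s)\|_{\DD(A)})\, ds < \infty$, which follows from the $\DD(A)$-moment bound combined with the parabolic regularization $v \in L^2_{loc}((0,\infty),\DD(A^{3/2}))$ and the log-moment in $H^2$). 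The It\^o correction terms from the noise are handled by \eqref{eq:sigma:cond:Lip:quant} with $j=2$ together with the $\DD(A)$-moments, and the stochastic integral is a genuine martingale (or handled by a stopping time) so it drops out in expectation. Integrating in time from $0$ to $1$, using stationarity of $\mu$ (so that $\EE^\mu \log(1 + \|A^{1/2+\eps/2} v(1)\|^2) = \EE^\mu \log(1 + \|A^{1/2+\eps/2} v(0)\|^2)$, and these cancel), and bounding the dissipative term below, one is left with $\int_V \log(1 + \|A^{1/2+\eps/2} v\|_{L^2}^2)\, d\mu(v) < \infty$. Since $\DD(A^{1/2+\eps/2}) = H^{1+\eps} \cap H$ does not immediately give $H^{2+\eps}$, I would iterate this scheme one more level — or, more efficiently, test directly with $A^{1+\eps/2}$ on the equation, i.e.\ estimate $\log(1 + \|A^{1+\eps/2} v\|_{L^2}^2)$, since $\DD(A^{1+\eps/2}) = H^{2+\eps} \cap H$ — using at that stage the already-upgraded information $v \in L^2_{loc}(\DD(A^{3/2}))$ together with the log-moments from Theorem~\ref{thm:regularity} to absorb the nonlinearity.

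The main obstacle will be the nonlinear estimate: bounding $\|\Leray(v \cdot \grad v + w \dz v)\|_{\DD(A^{\eps/2})}$ (equivalently in $H^\eps$) by a product of $\|v\|_{\DD(A)}$-type norms and $\|A^{1+\eps/2} v\|_{L^2}$ with a coefficient strictly less than one in front of the top-order factor, so that it can be absorbed into the dissipation. This requires a fractional product rule $\|fg\|_{H^\eps} \lesssim \|f\|_{H^\eps}\|g\|_{L^\infty} + \|f\|_{L^{p}}\|g\|_{W^{\eps,q}}$ plus anisotropic Gagliardo-Nirenberg inequalities to interpolate $H^{2+\eps}$ between $H^2$ and $L^{14}$ / $W^{1,6}$ (this is where the bound $\eps \le 1/42$ is forced — the number $42 = 3 \cdot 14$ and the appearance of the vertical derivative $w = -\int_0^z \grad\cdot v$, which costs one horizontal derivative, makes the bookkeeping delicate); the term $w \dz v$ is the most dangerous since $w$ involves a horizontal derivative of $v$ and $\dz v$ is itself at the level of $A$. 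A secondary technical point is justifying the It\^o formula for $\log(1 + \|A^{1+\eps/2} v\|^2)$ at this regularity, which requires a Galerkin approximation or a stopping-time/regularization argument exactly as in the proof of Theorem~\ref{thm:moments}; since the solution is already known to lie in $L^2_{loc}(\DD(A^{3/2}))$ a.s., this is routine but must be stated.
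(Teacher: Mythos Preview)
Your outline contains a genuine gap that the paper's argument avoids. After applying the logarithmic It\^o formula to $\log(1+\|A^{1/2+\eps/2}v\|_{L^2}^2)$, you say the surviving drift term is ``merely $P(\|v\|_{\DD(A)})$, which is time-integrable in expectation against the invariant measure by Theorem~\ref{thm:regularity}.'' But Theorem~\ref{thm:regularity} only furnishes a \emph{logarithmic} moment $\int_V \log(1+\|\Delta v\|_{L^2}^2)\,d\mu<\infty$, not any polynomial moment of $\|v\|_{H^2}$; a nontrivial polynomial $P(\|v\|_{\DD(A)})$ is simply not known to be $\mu$-integrable. The parenthetical about parabolic smoothing does not repair this: pathwise membership in $L^2_{loc}(\DD(A^{3/2}))$ gives no moment information. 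Likewise, your stationarity-cancellation device $\EE^\mu[\cdots(1)]=\EE^\mu[\cdots(0)]$ is circular unless you already know finiteness of the very quantity you are proving (truncation does not save it, since the truncated terms no longer cancel exactly).

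The paper resolves this by \emph{coupling} the fractional quantity $L_\eps^2=\|D^\eps\nabla v\|_{L^2}^2$ with $J^{14}=\|v\|_{L^{14}}^{14}$ and $K^6=\|\dz v\|_{L^6}^6$ into $X_\eps=J^{14}+K^6+L_\eps^2$, and shows that the total drift satisfies $A_J+A_K+A_{L,\eps}+\bar X_\eps\le C(1+\bar E^2)(1+X_\eps)$ where $\bar E=\|\nabla v\|_{L^2}$. After passing to $\log(1+X_\eps)$ and dividing by $1+X_\eps$, the right-hand side collapses to $C(1+\bar E^2)$, which \emph{is} controlled---by the basic energy moment of Lemma~\ref{lem:energy:moment}---yielding the linear-in-$T$ bound of Theorem~\ref{thm:higher:moments:eps}. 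The $\eps\le 1/42$ restriction appears precisely in the fractional product/interpolation estimate \eqref{eq:I22} for the $w\,\dz v$ term, where the exponent $6(1+7\eps)/(1-21\eps)$ must stay $\le 14$. Note also that you do not need to iterate or pass to $\|A^{1+\eps/2}v\|^2$: the dissipative part of the $\log(1+X_\eps)$ evolution already produces $\bar L_\eps^2=\|D^{1+\eps}\nabla v\|_{L^2}^2\sim\|v\|_{H^{2+\eps}}^2$ in the time integral, and the paper extracts $\int_0^T\log(1+\bar X_\eps)$ from $\int_0^T\bar X_\eps/(1+X_\eps)$ via the same absorption trick as in \eqref{eq:log:moment:3}--\eqref{eq:X:absorb}.
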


Similarly to Theorems~\ref{thm:existence} and~\ref{thm:regularity}, the difficulty in establishing the above result lies in obtaining moment bounds for the solution in spaces that are smaller than $H^2$. This is achieved in Theorem~\ref{thm:higher:moments:eps} below. In view of Theorem~\ref{thm:higher:regularity} and the Gevrey-class regularity for the Primitive Equations established in~\cite{Petcu06}, we {conjecture that the invariant measures for the 3D primitive equation are in fact supported on $C^\infty$ functions}, with appropriate continuity and compatibility assumptions on the force, and the boundary conditions in this paper. For the 2D Navier-Stokes equations on a bounded domain, with Dirichlet boundary conditions, following an idea from~\cite{KukavicaVicol12}, we are indeed able to prove in~\cite{GHKVZ13a} that the (unique) invariant measure is supported on $C^{\infty}$ functions. For the Primitive Equations however, this question appears to be much more difficult (cf.~Proof of Theorem~\ref{thm:higher:regularity} below).

{\bf Organization of the paper.} In Section~\ref{sec:weak:moment} we establish ``weak'' moment bounds cf.~Lemmas~\ref{lem:energy:moment}, \ref{lem:Y:moment},  \ref{lem:L:moments:local}, and \ref{lem:moments:local:H2}, which allow us to bound the probability that certain stopping times are small cf.~Lemmas~\ref{lem:sigma:gamma}, \ref{lem:tau:kappa}, and \ref{lem:rho:lambda}. Using these preliminary estimates in Section~\ref{sec:Feller} we give the proof of the Feller property Theorem~\ref{thm:Feller}. In Section~\ref{sec:existence} we establish the strong moment bound Theorem~\ref{thm:moments}, which is the key ingredient in the proof of the existence of invariant measures Theorem~\ref{thm:existence}. In Section~\ref{sec:regularity} we establish the regularity of invariant measures and give the proof of Theorem~\ref{thm:regularity}. In Section~\ref{sec:higher:regularity} we establish an improved moment bound, Theorem~\ref{thm:higher:moments:eps}, which we then use to prove Theorem~\ref{thm:higher:regularity}.

\section{Weak moment bounds} \label{sec:weak:moment}
In order to show that the Markov semigroup $P_t$ is Feller on $V$ we first establish certain ``weak'' moment bounds for solutions of \eqref{eq:P:1}--\eqref{eq:BC:3}. In Lemma~\ref{lem:energy:moment} we consider the energy $\|v\|_{L^2}$, and in Lemma~\ref{lem:Y:moment}  the $L^6$ norm of $v$ coupled with the $L^2$ norm of the vorticity. We call these moments ``weak'' because they are in spaces which do not compactly embed in $V$, where we expect the Markov semigroup to be Feller. As such, these $L^2$ and $L^6$ moment bounds are not by themselves sufficient in order to obtain the existence of invariant measures via the Kryloff-Bogoliubov argument. 
In Lemma~\ref{lem:L:moments:local} we  obtain certain moment bounds for the $H^1$ norm of the solution, which are valid up to a stopping time. 

\begin{lemma}[\bf Moment bounds for the energy]
\label{lem:energy:moment}
Assume that $v_0 \in L^2(\Omega;L^2)$ and suppose that $\sigma$ obeys either \eqref{eq:sigma:cond:small} with $\eps_\sigma \leq \lambda_1^2$.
Then for any deterministic $T>0$ the  strong solution $v(t)$ of \eqref{eq:P:1}--\eqref{eq:BC:3} obeys
\begin{align} 
\EE \|v(T)\|_{L^2}^2 + \EE \int_0^T \|\nabla v(t)\|_{L^2}^2 dt \leq \EE \|v_0\|_{L^2}^2 + C T \label{eq:E:moment}
\end{align}
for a suitable positive constant $C$.
\end{lemma}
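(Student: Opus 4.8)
The plan is to apply the infinite-dimensional It\^o formula to the functional $v \mapsto \|v\|_{L^2}^2$ along the strong solution $v(t)$ furnished by Theorem~\ref{thm:PathSol}, which lies in $C([0,\infty),V) \cap L^2_{loc}((0,\infty),D(A))$, so all the terms below are well-defined. First I would write
\begin{align*}
d \|v\|_{L^2}^2 = 2 \la v, -\Delta v - \Leray(v \cdot \grad v + w \dz v)\ra \, dt + \|\sigma(v)\|_{L_2(\UU,L^2)}^2 \, dt + 2 \la v, \sigma(v) dW \ra.
\end{align*}
The key algebraic point is that the nonlinear term vanishes in this $L^2$ test: since $v$ is divergence free in the averaged sense and $w$ is the diagnostic variable \eqref{eq:w}, one has $\la v, \Leray(v \cdot \grad v + w \dz v)\ra = \la v, v \cdot \grad v + w \dz v\ra = 0$ by the usual integration-by-parts identity for the primitive-equations transport operator (using $\grad \cdot v + \dz w = 0$ together with the periodic/Dirichlet boundary conditions, which kill the boundary contributions). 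Next, $\la v, -\Delta v\ra = \|\nabla v\|_{L^2}^2$ by integration by parts. This leaves
\begin{align*}
d \|v\|_{L^2}^2 + 2 \|\nabla v\|_{L^2}^2 \, dt = \|\sigma(v)\|_{L_2(\UU,L^2)}^2 \, dt + 2 \la v, \sigma(v) dW\ra.
\end{align*}

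Then I would integrate from $0$ to $T$ and take expectations. The stochastic integral term is a martingale with zero mean once one checks the integrability $\EE \int_0^T \|v\|_{L^2}^2 \|\sigma(v)\|_{L_2(\UU,L^2)}^2 \, dt < \infty$; this follows from the a priori regularity \eqref{eq:ExReg} together with the sublinear bound \eqref{eq:sigma:cond:small}, possibly after a standard localization by stopping times $\tau_n = \inf\{t : \|v(t)\|_{L^2} \geq n\}$ and then passing to the limit via monotone convergence on the left and the bound below on the right. For the noise term on the right I would use \eqref{eq:sigma:cond:small} to get $\EE \int_0^T \|\sigma(v)\|_{L_2(\UU,L^2)}^2 \, dt \leq C_\sigma T + \eps_\sigma \EE \int_0^T \|v\|_{L^2}^2 \, dt$, and then absorb the last term using the Poincar\'e inequality $\lambda_1 \|v\|_{L^2}^2 \leq \|\nabla v\|_{L^2}^2$ (valid since $v$ has zero mean on $\TT^3$) against the dissipation on the left-hand side. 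Under the hypothesis $\eps_\sigma \leq \lambda_1^2$ — which in particular gives $\eps_\sigma / \lambda_1 \leq \lambda_1 \le $ the available coefficient of $\|\nabla v\|_{L^2}^2$ — one such $\|\nabla v\|_{L^2}^2$ worth of dissipation suffices to dominate $\eps_\sigma \EE\int_0^T \|v\|_{L^2}^2$, leaving at least one copy of $\EE\int_0^T \|\nabla v\|_{L^2}^2$ on the left and the clean bound
\begin{align*}
\EE \|v(T)\|_{L^2}^2 + \EE \int_0^T \|\nabla v(t)\|_{L^2}^2 \, dt \leq \EE \|v_0\|_{L^2}^2 + C T
\end{align*}
with $C$ depending only on $C_\sigma$, $\lambda_1$, which is \eqref{eq:E:moment}.

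The only genuinely delicate point is the justification of the It\^o formula and the vanishing of the martingale's expectation: $v$ is only in $L^2_{loc}$ in time with values in $D(A)$, not uniformly, so strictly speaking one should either invoke an It\^o formula for the $L^2$ norm valid for solutions of this regularity class (as in the standard references for stochastic Navier--Stokes, e.g.~\cite{ZabczykDaPrato1992}), or carry out the localization argument sketched above carefully. I expect this to be the main obstacle only in the bookkeeping sense; the nonlinear cancellation $\la v, v\cdot\grad v + w\dz v\ra = 0$ is the structural reason the estimate closes at the $L^2$ level, exactly as for 3D Navier--Stokes energy identities, and it is this cancellation — unavailable in the stronger norms needed later — that makes this particular bound easy and motivates the ``weak'' terminology used in the section.
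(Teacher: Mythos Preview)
Your proof is correct and follows essentially the same approach as the paper: apply the It\^o formula in $L^2$, use the cancellation $\la v, v\cdot\grad v + w\dz v\ra = 0$ (the paper also records $\la \grad p, v\ra = 0$ separately rather than working through $\Leray$, but this is equivalent since $v\in H$), integrate, take expectations, and absorb $\eps_\sigma\|v\|_{L^2}^2$ via Poincar\'e. The paper does not spell out the stopping-time localization for the martingale term and simply asserts its expectation vanishes, so your added care there is fine but not a departure in method.
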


\begin{proof}[Proof of Lemma~\ref{lem:energy:moment}]
The It\=o lemma in $L^2$ applied to \eqref{eq:P:1}, combined with the cancellations
\begin{align*}
&\langle (v \cdot \grad + w \dz) v,v\rangle = \int (v \cdot \grad + w \dz) v \cdot v dx dz= - \frac 12 \int |v|^2 ( \grad \cdot v + \dz w) dx dz = 0
\end{align*}
and
\begin{align*}
&\langle \grad p, v\rangle = \int \grad p \cdot v dx dz = \int \grad p \cdot M(v) dx = - \int p \grad \cdot M(v) dx = 0 
\end{align*} 
yields
\begin{align} 
d \|v\|_{L^2}^2 
& = \left( - 2 \| \nabla v\|_{L^2}^2  + \sum_k \|\sigma_k (v)\|_{L^2}^2 \right) dt +2 \sum_k  \langle \sigma_k(v) ,v\rangle dW^k_t \label{eq:E:Ito}.
\end{align}
Integrating \eqref{eq:E:Ito} on $[0,T]$ and taking expected values of both sides, we obtain
\begin{align*} 
\EE \|v(T)\|_{L^2}^2 + \EE \int_0^T \| \nabla v \|_{L^2}^2 dt 
&= \EE \|v_0\|_{L^2}^2 + \EE \int_0^T \left( \|\sigma(v)\|_{L_2(L^2)}^2 - \| \nabla v\|_{L^2}^2 \right) dt\notag\\
&\leq \EE \|v_0\|_{L^2}^2 + \EE \int_0^T \left( \|  \sigma(v)\|_{L_2(L^2)}^2 - \lambda_1^2 \| v\|_{L^2}^2 \right) dt
\end{align*}
where $\lambda_1$ is the lowest eigenvalue of the (mean-zero) Laplacian on $\TT^3$, and we used  
\[
\EE  \int_0^T  \sum_k  \langle \sigma_k(v) ,v\rangle dW^k_t = 0
\]
for the martingale term.
Therefore, if the noise term satisfies the smallness condition \eqref{eq:sigma:cond:small}, with $\eps_\sigma \leq \lambda_1^2$ we have 
\[ 
\| \sigma(v)\|_{L_2(L^2)}^2 - \lambda_1^2 \| v\|_{L^2}^2 \leq C_\sigma.
\]
Therefore, there exists a constant $C$ that depends only on $C_\sigma, \lambda_1$, and $\alpha_\sigma$, such that 
\begin{align*} 
\EE \|v(T)\|_{L^2}^2 + \EE \int_0^T \|\nabla v(t)\|_{L^2}^2 dt \leq \EE \|v_0\|_{L^2}^2 + C T 
\end{align*}
holds for any deterministic time $T>0$, and the proof of the lemma is completed.
\end{proof}

Denote by $Y$ and $\bar Y$ the quantities
\begin{align}
Y = \| v \|_{L^6}^6 + \|\dz v\|_{L^2}^2, \quad \bar Y =  \| \nabla (|v|^3)\|_{L^2}^2 + \| \nabla \dz v \|_{L^2}^2 \label{eq:Y:def}
\end{align}
which are finite for $v \in V$, respectively $v \in \DD(A)$, in view of the three-dimensional Sobolev embedding. 

\begin{lemma}[\bf Moment bounds for $v$ in $L^6$ and $\dz v$ in $L^2$]
\label{lem:Y:moment}
For $v_0 \in L^2(\Omega; V)$ and any deterministic time $T>0$, the moment bound
\begin{align}
&\EE \sup_{t \in [0,T]}\log ( 1 + Y(t) ) 
+ \EE \int_0^T \frac{ \bar Y(t) }{1 +Y(t)} dt  \leq C \EE \log(1 + \|v_0\|_{H^1})  + C \EE \| v_0\|_{L^2}^2 + C T \label{eq:L6:H1z:moment}
\end{align}
holds,  for a suitable positive constant $C$, where $Y$ and $\bar Y$ are defined in \eqref{eq:Y:def} above.
\end{lemma}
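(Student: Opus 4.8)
The plan is to apply the It\^o lemma to the functional $\log(1+Y)$, where $Y = \|v\|_{L^6}^6 + \|\dz v\|_{L^2}^2$, and to exploit the logarithm in exactly the same way as in \cite{KukavicaVicol12}: the factor $1/(1+Y)$ produced by differentiating the logarithm will absorb the (a priori only cubic) bad terms coming from the nonlinearity. Concretely, I would first derive the evolution of $\|v\|_{L^6}^6$ by testing \eqref{eq:P:1} with $|v|^4 v$ (an It\^o computation in $L^6$, legitimate for $v\in\DD(A)$ which holds for a.e.\ $t$ by Theorem~\ref{thm:PathSol}), and the evolution of $\|\dz v\|_{L^2}^2$ by testing the vorticity formulation \eqref{eq:P:Vort} with $\dz v$ in $L^2$. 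In the first equation the transport terms $v\cdot\grad v + w\dz v$ again cancel against $|v|^4 v$ by the divergence-free structure $\grad\cdot v + \dz w = 0$, and the pressure term drops because $\grad p$ depends only on $x$ and one integrates $\dz$ of something in $z$ — more precisely $\int \grad p \cdot |v|^4 v\,dx\,dz$ does not obviously vanish, so here I would instead use the explicit formula \eqref{eq:p} and a Gagliardo--Nirenberg/H\"older estimate to bound it, and the viscous term yields $-c\|\nabla(|v|^3)\|_{L^2}^2$ (up to constants from writing $\int |v|^4 v \cdot \Delta v$). For the vorticity equation, the troublesome terms are $(\dz v)\cdot\grad v$ and $(\grad\cdot v)(\dz v)$, both of which are quadratic in $\dz v$ and $\grad v$; these are estimated by anisotropic Ladyzhenskaya/Agmon-type inequalities, producing something controlled by $\bar Y^{1/2} Y^{a}$ for suitable exponents, plus the good viscous term $-\|\nabla\dz v\|_{L^2}^2$.

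After these two It\^o expansions, adding them gives
\begin{align*}
dY + c\,\bar Y\,dt \le \big( P(Y) + \|\sigma\|_{\text{(appropriate)}}^2\big)\,dt + d(\text{mart}),
\end{align*}
where $P(Y)$ is at most a polynomial in $Y$ (cubic-type), and the martingale has quadratic variation bounded by $C(1+Y^2)$ thanks to the Lipschitz/boundedness hypotheses \eqref{eq:sigma:cond:Lip:quant}, \eqref{eq:sigma:cond:Bnd:L14}, \eqref{eq:sigma:cond:Bnd:W1z6}. Now apply It\^o to $\log(1+Y)$:
\begin{align*}
d\log(1+Y) + \frac{c\,\bar Y}{1+Y}\,dt \le \frac{P(Y)}{1+Y}\,dt + \frac{\|\sigma\|^2}{1+Y}\,dt - \frac{1}{2}\frac{d\langle Y\rangle}{(1+Y)^2} + d(\text{mart}).
\end{align*}
The It\^o correction term $-\tfrac12 d\langle Y\rangle/(1+Y)^2$ is $\le 0$ and can be discarded (or, if one is careful, it even helps). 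The key structural point — the whole reason for the logarithm — is that $\bar Y/(1+Y)$ on the left dominates the dangerous part of $P(Y)/(1+Y)$ on the right: interpolation inequalities give $P(Y)\lesssim \bar Y^{\theta}(1+Y)^{1-\theta+\delta}$ with $\theta<1$ and small $\delta$, whence by Young's inequality $P(Y)/(1+Y)\le \tfrac{c}{2}\bar Y/(1+Y) + C(1+Y)^{\text{(something} \le 0\text{?)}}$; here one must check that the leftover power of $(1+Y)$ is actually bounded, i.e.\ $\le C$, which is where the precise numerology of the Gagliardo--Nirenberg exponents for the 3D Primitive Equations (the anisotropic structure) must be verified. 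The term $\|\sigma\|^2/(1+Y)$ is bounded by a constant using \eqref{eq:sigma:cond:Bnd:L14}--\eqref{eq:sigma:cond:Bnd:W1z6} (since those controls are by $Y$ itself up to lower order).

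With the differential inequality $d\log(1+Y) + \tfrac{c}{2}\tfrac{\bar Y}{1+Y}\,dt \le C\,dt + d(\text{mart})$ in hand, I would integrate on $[0,T]$, take a supremum in $t$ inside, and use the Burkholder--Davis--Gundy inequality on the martingale term; its quadratic variation is $\int_0^T \|\sigma\|^2/(1+Y)^2\,ds \le CT$, so the BDG contribution is $\le CT^{1/2} \le C(1+T)$, cleanly absorbed. Taking expectations and bounding the initial term $\log(1+Y(0)) \le C\log(1+\|v_0\|_{H^1}) + C\|v_0\|_{L^2}^2$ (using $\|v\|_{L^6}^6 \lesssim \|v\|_{H^1}^6$ and $\log(1+x^6)\lesssim \log(1+x)$, and absorbing $\|\dz v\|_{L^2}^2\le\|v_0\|_{L^2}\|v_0\|_{H^1}$-type bounds) yields \eqref{eq:L6:H1z:moment}. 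The main obstacle, and the step I expect to be genuinely delicate rather than routine, is the bookkeeping in the previous paragraph: ensuring that after Young's inequality the residual power of $(1+Y)$ is nonpositive (so it gives a constant, not a runaway term) — this requires the anisotropic Ladyzhenskaya/Gagliardo--Nirenberg estimates for the pressure term and for $(\dz v)\cdot\grad v$ to be applied with exactly the right exponents, exploiting that $p$ is effectively two-dimensional via \eqref{eq:p}. A secondary technical point is justifying the It\^o formula for $\|v\|_{L^6}^6$ and for $\log(1+Y)$ at this regularity, which one handles by a Galerkin/truncation approximation and passage to the limit, or by noting $v\in\DD(A)$ for a.e.\ $t$ and using the regularity \eqref{eq:ExReg}.
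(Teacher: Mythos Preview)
Your overall strategy matches the paper's: derive the It\^o evolutions of $\|v\|_{L^6}^6$ (testing \eqref{eq:P:1} against $v|v|^4$) and of $\|\dz v\|_{L^2}^2$ (testing \eqref{eq:P:Vort} against $\dz v$), add, apply It\^o to $\log(1+Y)$, integrate, take sup, and use BDG. The pressure is indeed handled via \eqref{eq:p} and H\"older/Gagliardo--Nirenberg, and the transport terms do cancel. So the architecture is right.

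There is, however, a genuine gap in your closure argument. You assert that the drift after adding is ``$P(Y)$, at most a polynomial in $Y$,'' and that after dividing by $(1+Y)$ the residual power of $(1+Y)$ should be nonpositive, i.e.\ bounded by a constant. This is not what happens, and it cannot be made to happen with the available assumptions on $\sigma$. The pressure estimate, the It\^o correction term, and the vorticity stretching terms all produce a factor of $\|\nabla v\|_{L^2}^2$ that is \emph{not} part of $Y$ or $\bar Y$. Concretely, the paper obtains
\[
A_{L^6} + A_{H_z^1} \leq -\bar Y + C(1+Y)\bigl(1+\|\nabla v\|_{L^2}^2\bigr),
\]
so that after applying $\log$ the drift is bounded by $C(1+\|\nabla v\|_{L^2}^2)$, \emph{not} by a constant. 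The same quantity $\|\nabla v\|_{L^2}^2$ appears in the quadratic variation of the martingale after dividing by $(1+Y)^2$, so your BDG step also does not give $CT^{1/2}$ directly. The missing ingredient is the energy moment bound of Lemma~\ref{lem:energy:moment}: one invokes $\EE\int_0^T\|\nabla v\|_{L^2}^2\,dt\le\EE\|v_0\|_{L^2}^2+CT$ to close both the drift and the BDG terms. This is precisely why the right-hand side of \eqref{eq:L6:H1z:moment} carries the term $C\EE\|v_0\|_{L^2}^2$ in addition to $C\EE\log(1+\|v_0\|_{H^1})$; your proposed mechanism (residual $\le C$) would not produce it.

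Two smaller points. First, for this lemma the relevant noise condition is \eqref{eq:sigma:cond:Lip:quant} with $j=1$ (via $H^1\hookrightarrow L^6$), not \eqref{eq:sigma:cond:Bnd:L14}--\eqref{eq:sigma:cond:Bnd:W1z6}; using \eqref{eq:sigma:cond:Lip:quant} is exactly what introduces $\|\nabla v\|_{L^2}^2$ into the It\^o correction bound. Second, there is no need to absorb $P(Y)/(1+Y)$ into $\bar Y/(1+Y)$ via interpolation and Young: the drift is already of the form $C(1+Y)(1+\|\nabla v\|_{L^2}^2)$, so the factor $(1+Y)$ cancels exactly against the $1/(1+Y)$ from the logarithm.
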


\begin{proof}[Proof of Lemma~\ref{lem:Y:moment}]
In order to prove \eqref{eq:L6:H1z:moment}, we first obtain the equation obeyed  by $Y$. The It\=o lemma in $L^p$, with $p=6$, applied to \eqref{eq:P:1}, combined with the cancellation property $\langle (v \cdot \grad + w dz) v, v |v|^{4}\rangle = 0$, gives
\begin{align}
d \|v\|_{L^6}^6
&= \left( - \frac{10}{3} \| \nabla (v^3)\|_{L^2}^2 + 6 \langle \grad p, v |v|^{4} \rangle + 15 \sum_k \langle \sigma_k(v)^2, |v|^{4} \rangle \right) dt + 6 \sum_k \langle \sigma_k(v), v |v|^{4}\rangle dW^k_t\notag\\
&=:A_{L^6} dt + dM_{t,L^6}\label{eq:L6:Ito}.
\end{align}
In order to bound the pressure term, we use \eqref{eq:P:3} and \eqref{eq:p} to obtain
\begin{align*} 
\langle \grad p, v |v|^{4} \rangle 
&= \int \grad p \cdot v |v|^{4} dx dz  = \int \grad p \cdot M(v |v|^{4}) dx \notag\\
& \leq \|\grad p \|_{L^{3/2}_x}  \|M(v|v|^{4})\|_{L^{3}_x} \leq C \| M ( v\cdot \grad v + v \grad \cdot v)  \|_{L^{3/2}_x}  \|M(v|v|^{4})\|_{L^{3}_x}
\end{align*}
and thus, using the two-dimensional Gagliardo-Nirenberg inequality we arrive at
\begin{align}
6 \langle \grad p, v |v|^{4} \rangle 
&\leq C \left( \| v\|_{L^{6}_{x,z}} \|\nabla v\|_{L^2_{x,z}}  \right)  \left(  \| \grad M (v |v|^{4}) \|_{L^{6/5}_x} + \|M (v |v|^{4}) \|_{L^{6/5}_x} \right) \notag\\
&\leq C \| v\|_{L^{6}} \|\nabla v\|_{L^2}  \left( \| |v|^2 \|_{L^{3}_{x,z}} \| \nabla (|v|^3)\|_{L^2_{x,z}} +  \|v\|_{L^{6}_{x,z}}^{5} \right)  \notag\\
&\leq C \| v\|_{L^6} \|\nabla v\|_{L^2} \left( \|v\|_{L^6}^2 \| \nabla (|v|^3) \|_{L^2} + \|v\|_{L^6}^{5} \right)\notag\\
&\leq \frac{7}{3} \| \nabla (|v|^3) \|_{L^2}^2 + C \|v\|_{L^6}^{6} (1 + \|\nabla v\|_{L^2}^2). \label{eq:L6:1}
\end{align}
The term in \eqref{eq:L6:Ito} arising due to the It\=o correction is estimated using \eqref{eq:sigma:cond:Lip:quant} as
\begin{align} 
15 \sum_k \langle \sigma_k (v)^2, |v|^{4} \rangle 
&\leq 15 \sum_k \|\sigma_k(v)^2\|_{L^3} \| |v|^4 \|_{L^{3/2}} \notag\\
&\leq 15 \|v\|_{L^6}^4 \sum_k \|\nabla \sigma_k(v)\|_{L^2}^2
\leq C \|v\|_{L^{6}}^{4}  (1+\|\nabla v\|_{L^{2}}^{2} ) \label{eq:sigma:est:L6}
\end{align}
which combined with \eqref{eq:L6:1} yields
\begin{align} 
A_{L^6} \leq -\| \nabla (|v|^3) \|_{L^2}^2 + C (1+ \|v\|_{L^6}^{6})  (1 +  \|\nabla v\|_{L^2}^2) \label{eq:L6}
\end{align}
for some positive constant $C$.

The It\=o lemma in $L^2$, applied to the vorticity formulation \eqref{eq:P:Vort}, combined with the cancellation property $\langle (v \cdot \grad + w \dz) \dz v, \dz v \rangle = 0$, gives
\begin{align} 
d \| \dz v\|_{L^2}^2 
&= \left( - 2 \|\nabla \dz v\|_{L^2}^2 + 2 \langle - \dz v \cdot \grad v + (\grad \cdot v) \dz v, \dz v  \rangle   + \sum_k \|\dz \sigma_k(v)\|_{L^2}^2 \right) dt \notag\\
& \qquad \qquad + 2 \sum_k \langle \dz \sigma_k(v), \dz v\rangle dW^k_t\notag\\
&=: A_{H_z^1} dt + dM_{t,H_z^1} \label{eq:H1z:Ito}.
\end{align}
Upon integrating by parts in $x$ we have
\begin{align} 
\langle - \dz v \cdot \grad v &+ (\grad \cdot v) \dz v, \dz v \rangle
= \int (\grad \cdot v) |\dz v|^2 dx dz - \int (\dz v \cdot \grad v) \cdot \dz v dx dz  \notag\\
&\leq C \int |v| | \grad  \dz v|   |\dz v| dx dz 
\leq C \| v\|_{L^{6}} \| \nabla \dz v \|_{L^2} \| \dz v  \|_{L^{3}} \notag\\
& \leq C \| v\|_{L^{6}} \| \nabla \dz v \|_{L^2}  \left( \|v \|_{L^6}^{1/2} \| \nabla \dz v\|_{L^2}^{1/2} \right) \notag\\
&\leq \|\nabla \dz v\|_{L^2}^2 + C \| v\|_{L^6}^{6}, \label{eq:H1z:1}
\end{align}
where in the second inequality above we  appealed to the estimate $\|\dz v\|_{L^3} \leq C \|v\|_{L^6}^{1/2} \|\nabla \dz v \|_{L^2}^{1/2}$, which can be proven using integration by parts (see also Lemma~\ref{lem:vz:interpolation}). After using \eqref{eq:sigma:cond:Lip:quant} to bound
\begin{align} 
\sum_k \|\dz \sigma_k(v)\|_{L^2}^2  \leq C (1 + \| \nabla v\|_{L^2}^2)
\label{eq:sigma:est:H1z}
\end{align}
we obtain from \eqref{eq:H1z:1} that
\begin{align}
A_{H_z^1} 
&\leq - \|\nabla \dz v\|_{L^2}^2 + C (1 + \|\nabla v\|_{L^2}^2 + \|v\|_{L^6}^6)  \notag\\
&\leq - \|\nabla \dz v\|_{L^2}^2 + C (1+ \|v\|_{L^6}^{6})  (1 +  \|\nabla v\|_{L^2}^2) \label{eq:H1z}
\end{align}
for a suitable positive constant $C$.

To conclude the proof, we combine \eqref{eq:L6:Ito} and \eqref{eq:H1z:Ito} and obtain
\begin{align}
 d Y + \bar Y dt = \left(A_{L^6} + A_{H_z^1} + \bar Y\right) dt + (dM_{t,L^6} + dM_{t,H_z^1}),\label{eq:Y:Ito}
\end{align}
where $M_{t,\cdot}$ are the corresponding Martingale terms and $Y,\bar Y$ are as defined in \eqref{eq:Y:def}. Next, we appeal to an idea from \cite{KukavicaVicol12} and apply the It\=o formula to the $C^2$ function $\log (1+Y)$, to obtain
\begin{align}
d \log(1+Y) + \frac{\bar Y}{1+Y} dt 
&= \frac{1}{1+Y} \left(A_{L^6} + A_{H_z^1} + \bar Y\right) dt + \frac{1}{1+Y} (dM_{t,L^6} + dM_{t,H_z^1}) \notag\\
&\qquad - \frac{1}{2 (1+Y)^2} \sum_k \left( 6 \langle \sigma_k(v), v |v|^{4}\rangle + 2  \langle \dz \sigma_k(v), \dz v\rangle \right)^2 dt.
\label{eq:log:Y:Ito}
\end{align}
For a deterministic time $t$, we integrate \eqref{eq:log:Y:Ito} from $0$ to $t$, take a supremum over $t \in [0,T]$, apply the expected value, appeal to the bounds \eqref{eq:L6} and \eqref{eq:H1z}, and use the energy moment \eqref{eq:E:moment} to obtain
\begin{align}
& \EE \log(1+Y(T)) + \EE \int_0^T \frac{\bar Y(s)}{1+ Y(s)} ds \notag\\
& \leq \EE \log(1+ Y(0)) + C \EE \int_0^T (1+ \|\nabla v(s)\|_{L^2}^2) ds 
+ \EE  \sup_{t\in [0,T]} \left| \int_0^t \frac{1}{1+Y(s)} (dM_{s,L^6} + dM_{s,H_z^1}) \right|. \label{eq:Y:moment:2}
\end{align}
On the first term, we use the Sobolev embedding
\[
1 + Y(0) = 1 + \| v_0\|_{L^6}^6  + \|\dz v_0\|_{L^2}^2 \leq C (1 + \| v_0\|_{V})^6,
\] 
while on the second term, we appeal to \eqref{eq:E:moment}.
On the third term in \eqref{eq:Y:moment:2} we use the Burkholder-Davis-Gundy inequality, assumption \eqref{eq:sigma:cond:Lip:quant}, and the energy estimate \eqref{eq:E:moment}, in order to bound 
\begin{align} 
& \EE  \sup_{t\in [0,T]} \left| \int_0^t \frac{1}{1+Y(s)} (dM_{s,L^6} + dM_{s,H_z^1}) \right| \notag\\
& = \EE  \sup_{t\in [0,T]} \left| \sum_k \int_0^t \frac{1}{1+Y(s)} \left(6 \la \sigma_k(v),v|v|^4\ra + 2 \la \dz \sigma_k(v), \dz v\ra \right) dW^k_s \right| \notag\\
&\leq C \EE \left( \int_0^T \frac{1}{(1 + Y(t))^2} \left( (1+ \|\nabla v\|_{L^2}^2) \|v\|_{L^6}^{10} + (1+ \| \nabla v\|_{L^2}^2) \|\dz v\|_{L^2}^2 \right) dt \right)^{1/2} \notag\\
&\leq C \EE \left( \int_0^T 1 + \|\nabla v\|_{L^2}^2 dt \right)^{1/2} \leq C \EE \|v_0\|_{L^2}^2 + C (1+T) ,\label{eq:Y:moment:3}
\end{align}
Combining \eqref{eq:Y:moment:2}, \eqref{eq:Y:moment:3} and the energy moment bound \eqref{eq:E:moment}, we obtain
\begin{align}
\EE \sup_{t\in[0,t]} \log(1+Y(t)) + \EE \int_0^T \frac{\bar Y(s)}{1+ Y(s)} ds    \leq C \EE \log(1+ \|v_0\|_{V}) + C \EE \| v_0\|_{L^2}^2 + C (1+T)  \label{eq:Y:moment:1}
\end{align}
which concludes the proof of \eqref{eq:L6:H1z:moment}.
\end{proof}

For $\gamma>0$ and $v_0 \in V$, we introduce the stopping time 
\begin{align}
\sigma_\gamma(v_0) := \inf_{t\geq 0} \left\{ \sup_{s\in[0,t]}\| v(s,v_0)\|_{L^6}^4  + \int_0^t  \| \dz v(s,v_0)\|_{L^2}^2 \| \nabla \dz v(s,v_0)\|_{L^2}^2 ds > \gamma \right\}. 
\label{eq:sigma:gamma}
\end{align}
Note that we have $\sigma_\gamma(v_0) >0 $ when $\gamma \geq 1 + \|v_0\|_{L^6}^4$. The usefulness of the above stopping time shall become apparent in Section~\ref{sec:Feller}. We have introduced $\sigma_\gamma$ here since a direct consequence of Lemma~\ref{lem:Y:moment} is that we can control the probability that $\sigma_\gamma$ is smaller than a given deterministic time.
\begin{lemma}[\bf Estimate on $\sigma_\gamma$]
\label{lem:sigma:gamma}
 For $v_0 \in V$ and $\gamma \geq 2 + \|v_0\|_{L^6}^4$, let $\sigma_\gamma(v_0)$ be the stopping time defined in \eqref{eq:sigma:gamma}. Then, for any deterministic time $t>0$ we have
 \begin{align*} 
\Prb(\sigma_\gamma(v_0) \leq t ) \leq  \frac{C}{\log (\gamma/2)}  \Bigl( \EE \log(1+ \| v_0\|_{H^1}^2) + \EE \| v_0\|_{L^2}^2 + t \Bigr)
\end{align*}
for some positive constant $C$.
\end{lemma}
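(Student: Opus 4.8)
The plan is to deduce Lemma~\ref{lem:sigma:gamma} directly from the moment bound \eqref{eq:L6:H1z:moment} of Lemma~\ref{lem:Y:moment} via a Chebyshev-type argument adapted to the logarithmic structure of the estimate. First I would observe that, by the definition of the stopping time $\sigma_\gamma(v_0)$ in \eqref{eq:sigma:gamma}, on the event $\{\sigma_\gamma(v_0) \leq t\}$ the path $v(\cdot,v_0)$ must, by time $t$, have driven the quantity
\begin{align*}
G(s) := \sup_{r\in[0,s]}\|v(r,v_0)\|_{L^6}^4 + \int_0^s \|\dz v(r,v_0)\|_{L^2}^2 \|\nabla \dz v(r,v_0)\|_{L^2}^2 \, dr
\end{align*}
up to the level $\gamma$. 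I would then relate $G$ to the quantity $Y$ of \eqref{eq:Y:def} controlled by Lemma~\ref{lem:Y:moment}: since $\|v\|_{L^6}^4 \leq (1+\|v\|_{L^6}^6)$ and $Y = \|v\|_{L^6}^6 + \|\dz v\|_{L^2}^2$, the supremum term is bounded by $1 + \sup_{r\le s}Y(r)$, while for the time-integral term I would use that $\|\dz v\|_{L^2}^2 \leq Y$ and $\|\nabla\dz v\|_{L^2}^2 \leq \bar Y$, so that
\begin{align*}
\int_0^s \|\dz v\|_{L^2}^2 \|\nabla \dz v\|_{L^2}^2 \, dr \leq \Bigl(\sup_{r\le s}Y(r)\Bigr) \Bigl(\sup_{r\le s}(1+Y(r))\Bigr) \int_0^s \frac{\bar Y(r)}{1+Y(r)}\, dr.
\end{align*}
Hence on $\{\sigma_\gamma \leq t\}$ one has, roughly, $\bigl(1+\sup_{[0,t]}Y\bigr)\bigl(1 + \int_0^t \bar Y/(1+Y)\, dr\bigr) \gtrsim \gamma$, which after taking logarithms gives $\log(1+\sup_{[0,t]}Y) + \log(1+\int_0^t \bar Y/(1+Y)\,dr) \gtrsim \log\gamma$, or at worst $\log(\gamma/2)$ after absorbing harmless constants.

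The second step is the probabilistic one. From the previous display, the event $\{\sigma_\gamma(v_0)\leq t\}$ is contained in the event where $\log(1+\sup_{[0,t]}Y) + \log(1+\int_0^t \bar Y/(1+Y)\,dr) \geq c\log(\gamma/2)$ for an appropriate constant $c$. Since $\log(1+a) \leq a$, this is in turn contained in the event where $\log(1+\sup_{[0,t]}Y) + \int_0^t \bar Y/(1+Y)\,dr \geq c\log(\gamma/2)$. Applying Markov's inequality to this nonnegative random variable and invoking \eqref{eq:L6:H1z:moment} yields
\begin{align*}
\Prb(\sigma_\gamma(v_0)\leq t) \leq \frac{1}{c\log(\gamma/2)}\, \EE\Bigl(\sup_{[0,t]}\log(1+Y) + \int_0^t \frac{\bar Y}{1+Y}\,dr\Bigr) \leq \frac{C}{\log(\gamma/2)}\Bigl(\EE\log(1+\|v_0\|_{H^1}) + \EE\|v_0\|_{L^2}^2 + t\Bigr),
\end{align*}
which, after replacing $\log(1+\|v_0\|_{H^1})$ by the larger $\log(1+\|v_0\|_{H^1}^2)$, is exactly the claimed bound. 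The hypothesis $\gamma \geq 2 + \|v_0\|_{L^6}^4$ guarantees both that $\sigma_\gamma(v_0)>0$ (so the statement is non-vacuous) and that $\log(\gamma/2) > 0$, so the division is legitimate.

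The main obstacle — though a minor one — is bookkeeping the constants and the logarithm arguments carefully when passing from the product lower bound $(1+\sup Y)(1+\int \bar Y/(1+Y)) \gtrsim \gamma$ to a sum of logarithms bounded below by $\log(\gamma/2)$; one must be sure the multiplicative constants arising from the inequalities $\|v\|_{L^6}^4 \le 1+\|v\|_{L^6}^6$ and from $\log(ab) = \log a + \log b$ can be absorbed into the shift from $\gamma$ to $\gamma/2$ uniformly in $\gamma \geq 2$. A subtlety worth double-checking is whether the integral term $\int_0^{\sigma_\gamma} \|\dz v\|_{L^2}^2\|\nabla\dz v\|_{L^2}^2\,dr$ could itself reach $\gamma$ while $\sup_{[0,t]}\|v\|_{L^6}^4$ stays bounded; in that case one should instead bound that integral more directly by $\bigl(\sup_{[0,t]}Y\bigr)^2 \int_0^t \bar Y/(1+Y)\,dr$ times a constant and route the argument through $2\log(1+\sup_{[0,t]}Y) + \int_0^t \bar Y/(1+Y)\,dr$, whose expectation is still controlled by \eqref{eq:L6:H1z:moment} (with the factor $2$ absorbed into $C$). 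Either way, the structure of the proof — reduce the stopping-time event to a sublevel event for a quantity with a logarithmic moment bound, then apply Markov — is robust, and no PDE estimates beyond those already established are needed.
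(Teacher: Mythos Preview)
Your proposal is correct and follows essentially the same route as the paper: both arguments bound the quantity defining $\sigma_\gamma$ by $(1+\sup_{[0,t]}Y)^2\bigl(1+\int_0^t \bar Y/(1+Y)\,ds\bigr)$ and then apply Chebyshev/Markov together with the moment bound \eqref{eq:L6:H1z:moment}. The only cosmetic difference is that the paper first splits $\{\sigma_\gamma\le t\}$ into the three sub-events $\{\sup(1+Y)>(\gamma/2)^{1/4}\}$, $\{\int \bar Y/(1+Y)>(\gamma/2)^{1/2}\}$, and $\{\sup(1+Y)>(\gamma/2)^{3/2}\}$ and applies Chebyshev to each, whereas you take logarithms of the product bound and apply a single Markov inequality to $2\log(1+\sup Y)+\int \bar Y/(1+Y)$; your packaging is arguably tidier but the content is the same. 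One tiny quibble: $\log(1+\|v_0\|_{H^1})$ is not literally dominated by $\log(1+\|v_0\|_{H^1}^2)$ for small data, but since $1+x\le 2(1+x^2)$ the discrepancy is an additive $\log 2$ absorbed into $C$.
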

\begin{proof}[Proof of Lemma~\ref{lem:sigma:gamma}]
First, recall the quantities $Y = \|v\|_{L^6}^6 + \| \dz v \|_{L^2}^2$ and $\bar Y = \| \nabla (|v|^3)\|_{L^2}^2 + \| \nabla \dz v \|_{L^2}^2$ introduced in \eqref{eq:Y:def}.
By the definition of $\sigma_\gamma$, we have
\begin{align*}
\Prb( \sigma_\gamma(v_{0}) < t)
&\leq \Prb \left(  \int_{0}^{t} \|\dz v\|_{L^2}^{2} \|\nabla \dz v\|_{L^{2}}^{2} ds > \frac{\gamma}{2} \right) 
	+ \Prb \left( \sup_{[0,t]} \| v\|_{L^6}^4   > \frac{\gamma}{2} \right) \notag\\
&\leq \Prb \left(  \sup_{[0,t]}\left( 1 + \|v\|_{L^6}^6 + \| \dz v \|_{L^2}^2\right)^2 \int_{0}^{t} \frac{\|\nabla \dz v\|_{L^{2}}^{2}}{1 + \|v\|_{L^6}^6 + \| \dz v \|_{L^2}^2}ds > \frac{\gamma}{2} \right) \notag\\
& \qquad + \Prb \left(  \sup_{[0,t]} \left( 1 + \|v\|_{L^6}^6 + \| \dz v \|_{L^2}^2\right)^{2/3} > \frac{\gamma}{2}\right) \notag\\
&\leq  \Prb \left(  \sup_{ [0,t]} (1+Y) > \left(\frac{\gamma}{2}\right)^{1/4} \right)
+\Prb \left(   \int_{0}^{t} \frac{\bar Y}{1 + Y}ds >  \left(\frac{\gamma}{2}\right)^{1/2} \right)\notag\\
&\qquad  + \Prb \left(  \sup_{ [0,t]} (1+Y) > \left(\frac{\gamma}{2}\right)^{3/2} \right) \notag\\
&\leq 2 \Prb \left( \sup_{[0,t]} \log(1+ Y) > \frac 14 \log \frac{\gamma}{2} \right)  
+\Prb \left(   \int_{0}^{t} \frac{\bar Y}{1 + Y}ds > \left(\frac{\gamma}{2}\right)^{1/2} \right).
\end{align*}
In the last inequality we have also used that $\gamma \geq 2$.
Using the Chebyshev inequality and the moment bound obtained in Lemma~\ref{lem:Y:moment}, we conclude
\begin{align*}
\Prb( \sigma_\gamma(v_{0}) < t)
&\leq  \frac{8}{\log(\gamma/2)} \EE \left(  \sup_{s \in [0,t]}\log(1+Y(s)) \right)
+ \frac{{2}^{1/2}}{\gamma^{1/2}} \EE \left(   \int_{0}^{t} \frac{\bar Y(s)}{1 + Y(s)}ds \right) \notag\\
&\leq C  \left( \frac{1}{\log (\gamma/2)} + \frac{1}{\gamma^{1/2}} \right) \left( \EE \log(1+ \| v_0\|_{H^1}^2) + \EE \| v_0\|_{L^2}^2 + t \right)
\end{align*}
which gives the lemma upon noting that $\gamma\geq 2$ and hence $0 \leq \log (\gamma/2) \leq (\gamma/2)^{1/2}$.
\end{proof}
With $\sigma_\gamma$ defined in \eqref{eq:sigma:gamma}, we can also obtain a local in time moment bound for the $H^1$ norm of the solution, i.e., a bound at time $t \wedge \sigma_\gamma$. 
\begin{lemma}[\bf Local moment bounds for $v$ in $H^1$]
\label{lem:L:moments:local}
 Let $\gamma>0$, $v_0 \in V$, and $t>0$. We have
\begin{align}
  \EE &\sup_{s \in [0, t \wedge \sigma_\gamma]} \log(1 + \|\nabla v(s ,v_{0}) \|^{2}_{L^2}) 
  + \EE \int_{0}^{t\wedge \sigma_\gamma} \frac{\|\Delta v(s,v_{0})\|^{2}_{L^{2}}}{1 + \|\nabla v(s,v_{0})\|_{L^2}^{2}}ds 
  \notag\\
  &\leq C \EE \log(1 + \|v_{0}\|^{2}_{H^{1}}) + C(1+\gamma)(1+ t),
  \label{eq:FelMomBnds}
\end{align}
for a universal positive constant $C$.
\end{lemma}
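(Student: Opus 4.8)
The plan is to run an It\=o--logarithm argument on $\|\nabla v\|_{L^2}^2$ localized to $[0,t\wedge\sigma_\gamma]$, exactly in the spirit of the proof of Lemma~\ref{lem:Y:moment}, with the stopping time $\sigma_\gamma$ freezing the ``bad'' factors produced by the genuinely cubic three--dimensional nonlinearity. First I would apply the It\=o lemma in $V$ to \eqref{eq:P:1} (equivalently, compute $d\|A^{1/2}v\|_{L^2}^2$ from \eqref{eq:ExEqn}). Since $v\in\DD(A)$ for a.e.\ $t$ by Theorem~\ref{thm:PathSol}, one has $\Delta v\in H$, hence $\langle\Delta v,\Leray(v\cdot\grad v+w\dz v)\rangle=\langle\Delta v,v\cdot\grad v+w\dz v\rangle$; in particular no pressure term appears (the verification $\Delta v\in H$ is where periodicity in $z$ together with $\grad\cdot Mv=0$ is used). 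Using $\|v\|_V\simeq\|\nabla v\|_{L^2}$ for mean--zero fields, this gives
\begin{align*}
d\|\nabla v\|_{L^2}^2 + 2\|\Delta v\|_{L^2}^2\, dt = \Bigl(2\langle\Delta v,\, v\cdot\grad v + w\dz v\rangle + \sum_k\|\nabla\sigma_k(v)\|_{L^2}^2\Bigr)dt + dM_t,
\end{align*}
with $dM_t=2\sum_k\langle\nabla\sigma_k(v),\nabla v\rangle\,dW^k_t$ and $w$ computed from $v$ via \eqref{eq:w}; the It\=o correction is absorbed via \eqref{eq:sigma:cond:Lip:quant}, namely $\sum_k\|\nabla\sigma_k(v)\|_{L^2}^2\le C(1+\|\nabla v\|_{L^2}^2)$.

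The core of the argument is to estimate the nonlinear term so that, after absorbing one copy of $\|\Delta v\|_{L^2}^2$, the remaining factors are exactly the quantities controlled by $\sigma_\gamma$. Performing the by-now-standard barotropic/baroclinic splitting $v=Mv+(v-Mv)$, expressing $w$ through \eqref{eq:w}, and applying anisotropic Gagliardo--Nirenberg and Ladyzhenskaya-type interpolation as in the deterministic theory \cite{CT,KukavicaZiane07,DGHTZ12}, I expect a bound of the form
\begin{align*}
2\bigl|\langle\Delta v,\, v\cdot\grad v + w\dz v\rangle\bigr| \le \|\Delta v\|_{L^2}^2 + C\bigl(1+\|v\|_{L^6}^4\bigr)\bigl(1+\|\nabla v\|_{L^2}^2\bigr) + C\|\dz v\|_{L^2}^2\|\nabla\dz v\|_{L^2}^2\bigl(1+\|\nabla v\|_{L^2}^2\bigr),
\end{align*}
so that, together with the It\=o correction,
\begin{align*}
d\|\nabla v\|_{L^2}^2 + \|\Delta v\|_{L^2}^2\, dt \le C\bigl(1+\|v\|_{L^6}^4 + \|\dz v\|_{L^2}^2\|\nabla\dz v\|_{L^2}^2\bigr)\bigl(1+\|\nabla v\|_{L^2}^2\bigr)\,dt + dM_t.
\end{align*}

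Next I would apply the It\=o formula to the $C^2$ function $x\mapsto\log(1+x)$ (the device already used in the proof of Lemma~\ref{lem:Y:moment}), discard the nonpositive correction coming from the concavity of $\log$, and divide through to get
\begin{align*}
d\log(1+\|\nabla v\|_{L^2}^2) + \frac{\|\Delta v\|_{L^2}^2}{1+\|\nabla v\|_{L^2}^2}\,dt \le C\bigl(1+\|v\|_{L^6}^4 + \|\dz v\|_{L^2}^2\|\nabla\dz v\|_{L^2}^2\bigr)dt + \frac{dM_t}{1+\|\nabla v\|_{L^2}^2}.
\end{align*}
Integrating on $[0,t\wedge\sigma_\gamma]$, taking $\sup_{s\le t\wedge\sigma_\gamma}$ and then expectations: the drift integral is controlled directly by the definition \eqref{eq:sigma:gamma} of $\sigma_\gamma$, since on $[0,\sigma_\gamma)$ we have $\|v\|_{L^6}^4\le\gamma$ and $\int_0^s\|\dz v\|_{L^2}^2\|\nabla\dz v\|_{L^2}^2\le\gamma$, whence the integral is $\le C(1+\gamma)(1+t)$. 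For the martingale term I would use Burkholder--Davis--Gundy together with \eqref{eq:sigma:cond:Lip:quant}: the quadratic variation up to $t\wedge\sigma_\gamma$ is bounded by $C\int_0^{t\wedge\sigma_\gamma}(1+\|\nabla v\|_{L^2}^2)^{-2}\,\|\nabla v\|_{L^2}^2(1+\|\nabla v\|_{L^2}^2)\,ds\le Ct$, so the BDG contribution is $\le C(1+t)$. Finally $\log(1+\|\nabla v(0)\|_{L^2}^2)\le\log(1+\|v_0\|_{H^1}^2)$ handles the initial datum, and collecting the bounds yields \eqref{eq:FelMomBnds}.

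The step I expect to be the main obstacle is the nonlinear estimate in the second paragraph: one must compress the cubic term $\langle\Delta v,v\cdot\grad v+w\dz v\rangle$ into a form whose only non-integrable-looking factors are $\sup_s\|v\|_{L^6}$ (to a fixed power) and $\int\|\dz v\|_{L^2}^2\|\nabla\dz v\|_{L^2}^2\,ds$ --- precisely the two quantities for which $\sigma_\gamma$ supplies a deterministic bound --- while keeping every $\|\Delta v\|_{L^2}^2$-type contribution with a constant small enough to be absorbed on the left. This is the anisotropic mechanism underlying global regularity for the deterministic Primitive Equations, now carried out at the level of the stochastic energy identity; once it is in place the It\=o-$\log$ bookkeeping is routine.
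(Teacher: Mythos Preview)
Your proposal is correct and follows essentially the same route as the paper: It\=o for $\|\nabla v\|_{L^2}^2$, a nonlinear estimate yielding exactly the factor $(1+\|v\|_{L^6}^4+\|\dz v\|_{L^2}^2\|\nabla\dz v\|_{L^2}^2)(1+\|\nabla v\|_{L^2}^2)$, then It\=o for $\log(1+\cdot)$, integration up to $t\wedge\sigma_\gamma$, BDG for the martingale, and the definition of $\sigma_\gamma$ to close. The only cosmetic difference is that the paper obtains the nonlinear bound directly via $\|v\|_{L^6}\|\grad v\|_{L^3}\|\Delta v\|_{L^2}$ for the $v\cdot\grad v$ term and an anisotropic H\"older estimate $\|w\|_{L^\infty_z L^4_x}\|\dz v\|_{L^2_z L^4_x}\|\Delta v\|_{L^2}$ for the $w\dz v$ term, rather than through a barotropic/baroclinic splitting---but the resulting inequality and the remainder of the argument are identical.
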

We emphasize that we do not  obtain the moment bound \eqref{eq:FelMomBnds} up to any {deterministic} time $t>0$, only up to the stopping time $t \wedge \sigma_\gamma$, but this is sufficient for our purposes.
\begin{proof}[Proof of Lemma~\ref{lem:L:moments:local}]
As in the proof of~\cite[Theorem 3.2]{DGHTZ12}, apply $\nabla$ to \eqref{eq:P:1}, and use the $L^2$ It\=o lemma on the equation for $\nabla v$ to arrive at
\begin{align}
d \|\nabla v\|_{L^2}^2 
&= \left( - 2 \|\Delta v\|_{L^2}^2 + 2 \langle  v \cdot \grad v, \Delta v \rangle + 2 \langle  w \dz v, \Delta v\rangle  + \sum_k   \|\nabla \sigma_k(v)\|_{L^2}^2 \right) dt\notag\\
&\qquad + 2 \sum_k \langle \nabla \sigma_k(v), \nabla v\rangle dW^k_t \notag\\
&=: A_{H^1} dt + dM_{t,H^1}.
\label{eq:H1:Ito}
\end{align} 
Note that in view of the periodic boundary conditions, integrating by parts the term arising from the Laplacian in \eqref{eq:H1:Ito} yields
\begin{align*}
\langle - \nabla \Delta v, \nabla v \rangle 
&= \langle - \nabla \Lap v, \nabla v \rangle + \langle - \grad \partial_{zz} v, \grad v \rangle  + \langle - \dz \partial_{zz} v, \dz v \rangle \notag\\
&= \langle \nabla \grad v, \nabla \grad v \rangle + \langle \grad \dz v, \grad \dz v \rangle + \langle \partial_{zz}v , \partial_{zz} v \rangle = \|\Delta v\|_{L^2}^2.
\end{align*}
In \eqref{eq:H1:Ito} we have also used the cancellation property
\begin{align*}
\langle \grad p , \Delta v \rangle 
&= \langle \grad p, \Lap v \rangle + \langle \grad p , \partial_{zz} v \rangle  \notag \\ 
&=  - \langle p, \Lap \grad \cdot v \rangle - \langle \dz \grad p , \partial_{z} v \rangle  = \la p , \Lap \dz w \ra = - \langle \dz p, \Lap w \rangle = 0
\end{align*}
due to \eqref{eq:P:2} and \eqref{eq:P:3}.

In order to obtain a bound for $A_{H^1}$, we need to estimate the nonlinear terms 
$ I_1 =  \langle  v \cdot \grad v, \Delta v \rangle$ and  $\quad I_2 =  \langle  w \dz v, \Delta v \rangle$.
For the first term we have
\begin{align}
| I_1| 
&\leq \|v\|_{L^{6}} \|\grad v\|_{L^{3}} \|\Delta v\|_{L^2} \leq C \|v\|_{L^{6}} \left( \| \nabla v\|_{L^2}^{1/2} \|\Delta v\|_{L^2}^{1/2} \right)  \|\Delta v\|_{L^2} \notag\\ 
& \leq \frac{1}{2} \| \Delta v\|_{L^2}^2 + C \|\nabla v\|_{L^2}^2  \| v\|_{L^6}^4 \label{eq:AH1:1}.
\end{align}
The second nonlinear term is bounded using the anisotropic H\"older inequality and the Sobolev inequality as
\begin{align}
|I_2| 
&\leq \| w\|_{L^\infty_z L^4_x} \| \dz v \|_{L^2_z L^4_x} \|\Delta v\|_{L^2_{x,z}} 
\leq C \|\grad v\|_{L^2_z L^4_x} \|\dz v\|_{L^2_z L^4_{x}} \|\Delta v\|_{L^2_{x,z}}  \notag\\
&\leq C \left( \|\nabla v\|_{L^2}^{1/2} \|\Delta v\|_{L^2}^{1/2} + \|\nabla v\|_{L^2}\right) \left( \|\dz v\|_{L^2}^{1/2} \| \nabla \dz v\|_{L^2}^{1/2} \right) \|\Delta v\|_{L^2} \notag\\
&\leq \frac 12 \|\Delta v\|_{L^2}^2 + C \|\nabla v\|_{L^2}^2 (1 + \| \dz v\|_{L^2}^2 \|\nabla \dz v\|_{L^2}^2). \label{eq:AH1:2}
\end{align}
Also, by our assumption \eqref{eq:sigma:cond:Lip:quant} on $\sigma(v)$ we have 
\begin{align}
\sum_k   \|\nabla \sigma_k(v)\|_{L^2}^2 = \| \nabla \sigma(v)\|_{L_2(L^2)}^2 \leq C ( 1 + \|\nabla v\|_{L^2}^2 ) \label{eq:AH1:3},
\end{align}
which combined with \eqref{eq:AH1:1} and \eqref{eq:AH1:2} yields
\begin{align}
A_{H^1} \leq - \| \Delta v\|_{L^2}^2 + C \left(1 + \| \nabla v\|_{L^2}^2\right) \left( 1 + \| v\|_{L^6}^4 +  \| \dz v\|_{L^2}^2 \|\nabla \dz v\|_{L^2}^2 \right)  \label{eq:AH1}
\end{align}
for some positive constant $C$.

Now, we apply the It\=o lemma to the function $\log (1 + \|\nabla v\|_{L^2}^2)$ and obtain from \eqref{eq:H1:Ito}, 
combined with \eqref{eq:AH1:3} and \eqref{eq:AH1}, that
\begin{align}
&d \log (1 + \|\nabla v\|_{L^2}^2) + \frac{\|\Delta v\|_{L^2}^2}{1 + \|\nabla v\|_{L^2}^2} dt \notag\\
&\qquad \leq C \left( 1 + \| v\|_{L^6}^4 +  \| \dz v\|_{L^2}^2 \|\nabla \dz v\|_{L^2}^2 \right) dt + \frac{1}{1 + \|\nabla v\|_{L^2}^2}   dM_{t,H^1}  \notag\\
& \qquad \qquad + \frac{C}{(1+ \|\nabla v\|_{L^2}^2)^2} \sum_k  \langle \nabla \sigma_k(v),\nabla v\rangle^2 dt \notag\\
&\qquad \leq C \left( 1 + \| v\|_{L^6}^4 +  \| \dz v\|_{L^2}^2 \|\nabla \dz v\|_{L^2}^2 \right) dt 
+   2 \sum_k \frac{1}{1 + \|\nabla v\|_{L^2}^2}  \langle \nabla \sigma_k(v), \nabla v\rangle dW^k_t.
\label{eq:log:H1:1} 
\end{align}
To handle the martingale terms, apply the Burkholder-Davis-Gundy inequality and estimate
\begin{align*}
   \EE \sup_{s \in [0,t\wedge \sigma_\gamma]} &\left| \int_{0}^{s}  \sum_k \frac{\langle \nabla \sigma_k(v), \nabla v\rangle}{1 + \|\nabla v\|_{L^2}^2}   dW^k_{s'}  \right|
   \notag\\
   &\leq C \EE  \left( \int_{0}^{t\wedge \sigma_\gamma} 
   	\sum_k \left(\frac{\langle \nabla \sigma_k(v), \nabla v\rangle}{1 + \|\nabla v\|_{L^2}^2}  \right)^{2} ds \right)^{1/2}
   \leq C t^{1/2}.
\end{align*}
We now integrate the inequality \eqref{eq:log:H1:1} from $0$ to $s$, take a supremum over $[0, t\wedge \sigma_\gamma]$ and then expected values to obtain
\begin{align*}
&\EE \sup_{s \in [0,t\wedge \sigma_\gamma]} \left(\log(1+ \|\nabla v(s)\|_{L^2}^2) + \int_0^{t \wedge \sigma_\gamma} \frac{\|\Delta v(s)\|_{L^2}^2}{1 + \|\nabla v(s)\|_{L^2}^2} ds \right) \notag\\
&\qquad \leq \EE \log(1+ \|\nabla v_0\|_{L^2}^2)  
+ C \EE \int_0^{t\wedge \sigma_\gamma} \Bigl(1+ \|v (s)\|_{L^6}^4 + \| \dz v(s)\|_{L^2}^2 \|\nabla \dz v(s)\|_{L^2}^2\Bigr) ds  \notag\\
&\qquad \quad
+\EE \sup_{s \in [0,t\wedge \sigma_\gamma]} \left| \int_{0}^{s}  
   \sum_k \frac{\langle \nabla \sigma_k(v), \nabla v\rangle }{1 + \|\nabla v\|_{L^2}^2}  dW^k_{s'}  \right|
\notag\\
&\qquad \leq \EE \log(1+ \|\nabla v_0\|_{L^2}^2)  + C (1+t) (1+\gamma),
\end{align*}
which concludes the proof of \eqref{eq:FelMomBnds}. 
\end{proof}

For $\kappa > 0$ and $v_{0} \in V$ define the stopping time  
\begin{align}
   \tau_{\kappa}(v_{0}) := \inf_{t \geq 0} \left\{  \int_{0}^{t} \Bigl( \| \nabla v (s,v_0)\|_{L^2}^2 +  \|\Delta v (s,v_0)\|_{L^2}^2   + \|\nabla v(s,v_0)\|_{L^2}^{2} \|\Delta v(s,v_0)\|_{L^{2}}^{2} \Bigr)ds > \kappa \right\}
   \label{eq:tau:kappa}
\end{align}
where, as usual, $v(t,v_0)$ denotes as usual the strong solution of the 3D stochastic Primitive Equations with the initial condition $v_0$.  Similarly, for any $v_{0}^{(1)}, v_{0}^{(2)} \in V$, we introduce the notation
\begin{align}
  \tau_{\kappa}(v_{0}^{(1)},{v}_{0}^{(2)}) :=  \tau_{\kappa}(v_{0}^{(1)}) \wedge \tau_{\kappa}({v}_{0}^{(2)}) 
 \label{eq:tau:kappa:2}.
\end{align}
A  consequence of Lemma~\ref{lem:L:moments:local} is that we may estimate the probability of $\tau_\kappa$ being small. More precisely, we have.
\begin{lemma}[\bf Estimate on $\tau_\kappa$]
\label{lem:tau:kappa}
 Let $v_0\in V$, $\kappa \geq 2e$, and $t>0$.  For any $\gamma> 2 + \|v_0\|_{L^6}^4$, we have
 \begin{align}
\Prb( \tau_\kappa(v_0) < t ) 
&\leq  \frac{C}{\log \kappa}  \left( \EE \log(1 + \|v_{0}\|^{2}_{H^{1}}) + (1+\gamma)(1+ t) \right)  \notag\\
&\qquad + \frac{C}{\log \gamma}  \Bigl( \EE \log(1+ \| v_0\|_{H^1}^2) + \EE \| v_0\|_{L^2}^2 + t \Bigr)
\label{eq:Feller:3}
\end{align}
where $\tau_k$ is the stopping time defined in \eqref{eq:tau:kappa} above.
In particular, choosing $\kappa$ sufficiently large and $\gamma = \sqrt{\log \kappa}$
we have the estimate
\begin{align}
\Prb( \tau_\kappa(v_0) < t ) \leq \frac{C}{\log \log \kappa}  \Bigl(\EE \log(1+ \| v_0\|_{H^1}^2) + \EE \| v_0\|_{L^2}^2 + t + 1 \Bigr).
\label{eq:loglog:bnd:tau:kappa}
\end{align}
\end{lemma}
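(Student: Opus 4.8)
The plan is to decompose the event $\{\tau_\kappa(v_0) < t\}$ by intersecting with the stopping time $\sigma_\gamma(v_0)$ from \eqref{eq:sigma:gamma}, which controls precisely the quantities $\sup_{[0,s]}\|v\|_{L^6}^4$ and $\int_0^s \|\dz v\|_{L^2}^2 \|\nabla \dz v\|_{L^2}^2$ appearing in the right-hand side of the local $H^1$ bound \eqref{eq:FelMomBnds}. Concretely, I would write
\begin{align*}
\Prb(\tau_\kappa(v_0) < t) \leq \Prb(\tau_\kappa(v_0) < t \wedge \sigma_\gamma(v_0)) + \Prb(\sigma_\gamma(v_0) \leq t).
\end{align*}
The second term is estimated directly by Lemma~\ref{lem:sigma:gamma}, yielding the factor $C/\log(\gamma/2)$ (which we absorb into $C/\log\gamma$ for $\gamma$ large, say $\gamma \geq 4$), times $\EE\log(1+\|v_0\|_{H^1}^2) + \EE\|v_0\|_{L^2}^2 + t$. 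This accounts for the second line of \eqref{eq:Feller:3}.

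For the first term, on the event $\{t \wedge \sigma_\gamma(v_0) > \tau_\kappa(v_0)\}$ the integral defining $\tau_\kappa$ in \eqref{eq:tau:kappa} has reached the level $\kappa$ before time $t\wedge\sigma_\gamma$. The key observation is that, up to time $\sigma_\gamma$, the integrand in \eqref{eq:tau:kappa} is controlled by $(1+\|\nabla v\|_{L^2}^2)(1 + \|v\|_{L^6}^4 + \|\dz v\|_{L^2}^2\|\nabla\dz v\|_{L^2}^2)$ — this is exactly the structure of the bound \eqref{eq:AH1} on $A_{H^1}$. Thus I want to show that $\int_0^{t\wedge\sigma_\gamma} (\ldots) ds > \kappa$ forces $\int_0^{t\wedge\sigma_\gamma} \|\Delta v\|_{L^2}^2/(1+\|\nabla v\|_{L^2}^2)\, ds$ to be large, or $\sup \log(1+\|\nabla v\|_{L^2}^2)$ to be large, so that Chebyshev applied to the left-hand side of \eqref{eq:FelMomBnds} gives a bound of order $1/\log\kappa$. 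The cleanest route: from \eqref{eq:tau:kappa} and the definition of $\sigma_\gamma$, on $\{t\wedge\sigma_\gamma > \tau_\kappa\}$ one has, with $\Phi(s) := \sup_{[0,s]}\log(1+\|\nabla v\|_{L^2}^2) + \int_0^s \|\Delta v\|_{L^2}^2/(1+\|\nabla v\|_{L^2}^2)\,ds$, an inequality of the form $\kappa \leq C(1+\gamma)(1+t) e^{\Phi(t\wedge\sigma_\gamma)}$ — since $\|\Delta v\|_{L^2}^2 \leq (1+\|\nabla v\|_{L^2}^2)\cdot \|\Delta v\|_{L^2}^2/(1+\|\nabla v\|_{L^2}^2)$ and $\sup(1+\|\nabla v\|_{L^2}^2) = e^{\sup\log(1+\|\nabla v\|_{L^2}^2)}$, while $\|\nabla v\|_{L^2}^2\|\Delta v\|_{L^2}^2$ is handled the same way using also that the $\|v\|_{L^6}^4$ and $\|\dz v\|_{L^2}^2\|\nabla\dz v\|_{L^2}^2$ factors are $\leq \gamma$ before $\sigma_\gamma$. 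Taking logarithms, $\{t\wedge\sigma_\gamma > \tau_\kappa\} \subset \{\Phi(t\wedge\sigma_\gamma) \geq \log\kappa - \log(C(1+\gamma)(1+t))\}$, and for $\kappa$ large (so that $\log\kappa - \log(C(1+\gamma)(1+t)) \geq \tfrac12\log\kappa$) Chebyshev together with Lemma~\ref{lem:L:moments:local} bounds this probability by $\frac{C}{\log\kappa}\bigl(\EE\log(1+\|v_0\|_{H^1}^2) + (1+\gamma)(1+t)\bigr)$, which is the first line of \eqref{eq:Feller:3}.

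Finally, \eqref{eq:loglog:bnd:tau:kappa} follows by the stated choice $\gamma = \sqrt{\log\kappa}$: the first line of \eqref{eq:Feller:3} becomes $\frac{C}{\log\kappa}(1+\sqrt{\log\kappa})(1+t)\cdot(\ldots) \lesssim \frac{C}{\sqrt{\log\kappa}}(\ldots) \leq \frac{C}{\log\log\kappa}(\ldots)$ for $\kappa$ large, while the second line is $\frac{C}{\log\sqrt{\log\kappa}} = \frac{2C}{\log\log\kappa}$ times the moment quantity; adding them and noting $\sqrt{\log\kappa}(1+t) \lesssim$ can be absorbed once we only keep the $\log\log\kappa$ rate gives \eqref{eq:loglog:bnd:tau:kappa}. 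The main obstacle I anticipate is the bookkeeping in the first term: making precise the claim that crossing level $\kappa$ in the $\tau_\kappa$-integral forces $\Phi$ to be of size $\log\kappa$, since one must carefully use the definition of $\sigma_\gamma$ to convert the raw quantities $\|v\|_{L^6}^4$, $\|\dz v\|_{L^2}^2\|\nabla\dz v\|_{L^2}^2$, and $\|\Delta v\|_{L^2}^2$, $\|\nabla v\|_{L^2}^2\|\Delta v\|_{L^2}^2$ into the logarithmic quantities that Lemma~\ref{lem:L:moments:local} actually controls — essentially reversing the $\log$-transform at the level of the a priori bound rather than the solution.
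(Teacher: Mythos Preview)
Your strategy is correct and matches the paper's: decompose via $\sigma_\gamma$, bound $\Prb(\sigma_\gamma < t)$ by Lemma~\ref{lem:sigma:gamma}, and on $\{\sigma_\gamma \geq t\}$ use Chebyshev together with Lemma~\ref{lem:L:moments:local}. Two points of cleanup, the second of which resolves the bookkeeping worry you flag at the end.

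First, your intermediate claim that the $\tau_\kappa$ integrand is controlled by $(1+\|\nabla v\|_{L^2}^2)(1 + \|v\|_{L^6}^4 + \|\dz v\|_{L^2}^2\|\nabla\dz v\|_{L^2}^2)$ is not right: the integrand contains $\|\Delta v\|_{L^2}^2$, which is not bounded by those quantities. The correct pointwise factorization (which you in effect use two lines later) is simply
\[
\|\nabla v\|_{L^2}^2 + \|\Delta v\|_{L^2}^2 + \|\nabla v\|_{L^2}^2\|\Delta v\|_{L^2}^2 \leq C\,(1+\|\nabla v\|_{L^2}^2)^2 \cdot \frac{\|\Delta v\|_{L^2}^2}{1+\|\nabla v\|_{L^2}^2},
\]
using Poincar\'e for the first term. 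In particular no $\gamma$ enters here; the $(1+\gamma)$ factor in your displayed bound $\kappa \leq C(1+\gamma)(1+t)e^{\Phi}$ is spurious. The $\gamma$-dependence appears only at the very end, through $\EE\Phi$ via Lemma~\ref{lem:L:moments:local}.

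Second, the paper sidesteps the ``reversing the log'' issue entirely by a product splitting rather than by bounding $\Phi_1+\Phi_2$ jointly: from the display above one has on $\{\tau_\kappa < t,\ \sigma_\gamma \geq t\}$ that $\kappa < C\bigl(\sup_{[0,t\wedge\sigma_\gamma]}(1+\|\nabla v\|_{L^2}^2)\bigr)^2 \int_0^{t\wedge\sigma_\gamma}\frac{\|\Delta v\|_{L^2}^2}{1+\|\nabla v\|_{L^2}^2}\,ds$, so either the supremum exceeds $\kappa^{1/4}$ or the integral exceeds $\sqrt{\kappa}/C$. Chebyshev is then applied to each event separately, using the two summands on the left of \eqref{eq:FelMomBnds}. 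This avoids introducing any $(1+t)$ factor inside a logarithm and gives \eqref{eq:Feller:3} for all $\kappa \geq 2e$ without a further largeness assumption.
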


\begin{proof}[Proof of Lemma~\ref{lem:tau:kappa}]
 Letting $v = v(t,v_0)$ and $\gamma > 1+ \|v_0\|_{L^6}^4$, while recalling the stopping time $\sigma_\gamma(v_0)$ defined in \eqref{eq:sigma:gamma}, we obtain
\begin{align*}
  & \Prb( \tau_{\kappa}(v_{0}) < t) \notag\\
   &\leq \Prb (\tau_\kappa (v_0) < t , \sigma_\gamma(v_0) \geq t ) + \Prb( \sigma_\gamma(v_0) <  t) \notag\\
   &\leq \Prb \left(  \int_{0}^{t\wedge \sigma_\gamma} \Bigl( \| \nabla v\|_{L^2}^{2} + \|\Delta v\|_{L^{2}}^{2} + \|\nabla v\|_{L^2}^{2} \|\Delta v\|_{L^{2}}^{2} \Bigr) ds > \kappa \right) + \Prb( \sigma_\gamma(v_0) < t)
   \notag\\
   &\leq \Prb \left( C  \sup_{  [0,t \wedge \sigma_\gamma]}(1 + \|\nabla v\|_{L^2}^{2})^2 \int_{0}^{t\wedge \sigma_\gamma} \frac{\|\Delta v\|_{L^{2}}^{2}}{1 + \|\nabla v\|_{L^{2}}^{2}}ds > \kappa \right) +  \Prb( \sigma_\gamma(v_0) < t)
      \notag\\
   &\leq  \Prb \left(  \sup_{ [0,t\wedge \sigma_\gamma]}(1 + \|\nabla v\|_{L^2}^{2}) > {\kappa}^{1/4} \right)
             +\Prb \left(   \int_{0}^{t\wedge \sigma_\gamma} \frac{\|\Delta v\|_{L^2}^{2}}{1 + \|\nabla v\|_{L^2}^{2}}ds > \frac{ \sqrt{\kappa} }{C} \right) +  \Prb( \sigma_\gamma(v_0) < t)
             \notag\\
   & \leq  \Prb \left(  \sup_{  [0,t\wedge \sigma_\gamma]} \log (1 + \|\nabla v\|_{L^2}^{2}) > \frac 14 \log {\kappa}  \right)
             +\Prb \left(   \int_{0}^{t\wedge \sigma_\gamma} \frac{\|\Delta v\|_{L^{2}}^{2}}{1 + \|\nabla v\|_{L^2}^{2}}ds > \frac{ \sqrt{\kappa} }{C} \right) +  \Prb( \sigma_\gamma(v_0) < t),
\end{align*}
where $C\geq 1$ is  such that $2 \| \nabla v \|_{L^2} \leq C \|\Delta v\|_{L^2}$. 
Using the Chebyshev inequality, the local moment bound of Lemma~\ref{lem:L:moments:local}, and the estimate on $\sigma_\gamma$ given in Lemma~\ref{lem:sigma:gamma}, we arrive at
\begin{align*}
  & \Prb( \tau_{\kappa}(v_{0}) < t) \notag\\
  &\leq  \frac{4}{\log \kappa } \EE \left(  \sup_{s \in [0,t\wedge \sigma_\gamma]}\log(1+\| \nabla v\|_{L^2}^{2}) \right)
             + \frac{C}{\sqrt{\kappa}} \EE \left(   \int_{0}^{t \wedge \sigma_\gamma} \frac{\|\Delta v\|_{L^{2}}^{2}}{1 + \|\nabla v\|_{L^2}^{2}}ds \right) + \Prb( \sigma_\gamma(v_0) < t )  \notag\\
  &\leq      \frac{C}{\log \kappa}  \left( \EE \log(1 + \|v_{0}\|^{2}_{H^{1}}) + (1+\gamma)(1+ t) \right)  + \frac{C}{\log \gamma}  \Bigl( \EE \log(1+ \| v_0\|_{H^1}^2) + \EE \| v_0\|_{L^2}^2 + t \Bigr).
\end{align*}
This concludes the proof of the lemma.
\end{proof}

We introduce one more stopping time, designed to account for the instant parabolic regularization inherent in the equations. Namely, for $\lambda>0$, and $v_0 \in V$,  let
\begin{align}
\rho_\lambda(v_0)  := \inf_{t \geq 0} \left\{ t \| v(t, v_{0}) \|_{H^{2}}^{2} \geq \lambda \right\}.
\label{eq:rho:lambda}
\end{align}
and
\begin{align*}
\rho_\lambda(v_0,\tilde v_0) := \rho_\lambda(v_0) \wedge \rho_\lambda(\tilde v_0).
\end{align*}
In order to estimate the probability that $\rho_\lambda$ is small we need a suitable local moment bound on the $H^2$-norm of the solution, in the spirit of Lemma~\ref{lem:L:moments:local}.
\begin{lemma}[\bf Local moment bounds for $v$ in $H^2$]
 \label{lem:moments:local:H2}
 For $v_{0} \in V$ (deterministic) and $\kappa > 0$ we have
 \begin{align}
   \EE& \left(\sup_{s \in [0,  t \wedge\tau_{\kappa}(v_{0})] } s \| \Delta v(s, v_{0}) \|^{2} \right)
   \leq C \exp( C\kappa) (t + \kappa),
   \label{eq:H2:loc:moment}
\end{align}
for a universal positive constant $C$, where $\tau_{\kappa}(v_{0})$ is defined in
\eqref{eq:tau:kappa}.
\end{lemma}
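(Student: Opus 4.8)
The plan is to derive an It\^o evolution inequality for the weighted quantity $t \|\Delta v(t)\|_{L^2}^2$ and then run a logarithmic-type moment argument up to the stopping time $\tau_\kappa$, exactly in the spirit of Lemma~\ref{lem:L:moments:local} but now with the extra time weight providing the parabolic gain near $t=0$. First I would recall the $L^2$ It\^o formula for $\|\Delta v\|_{L^2}^2 = \|A v\|_{L^2}^2$: applying $A$ to \eqref{eq:P:1} (equivalently, differentiating the vorticity and horizontal-gradient equations once more), using the periodic boundary conditions to integrate by parts, and the pressure cancellation $\langle \nabla p, \Delta^2 v\rangle = 0$ which follows from $\dz p = 0$ as in the proof of Lemma~\ref{lem:L:moments:local}, one obtains
\begin{align*}
d \|\Delta v\|_{L^2}^2 + 2 \|\nabla \Delta v\|_{L^2}^2 \, dt
&= \Bigl( 2 \langle A(v\cdot\grad v + w\dz v), \Delta v\rangle + \sum_k \|\Delta \sigma_k(v)\|_{L^2}^2 \Bigr) dt + dM_{t,H^2},
\end{align*}
where $M_{t,H^2}$ is the martingale $2\sum_k \langle \Delta \sigma_k(v), \Delta v\rangle dW^k_t$ and the It\^o correction is controlled by \eqref{eq:sigma:cond:Lip:quant} with $j=2$ via $\sum_k \|\Delta\sigma_k(v)\|_{L^2}^2 \le C(1 + \|\Delta v\|_{L^2}^2)$. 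The nonlinear term must be estimated by a Ladyzhenskaya/Agmon-type interpolation so that, after Young's inequality, the top-order dissipation $\|\nabla\Delta v\|_{L^2}^2$ absorbs the worst contribution, leaving a bound of the form $2\langle A(\text{nonlinearity}),\Delta v\rangle \le \|\nabla\Delta v\|_{L^2}^2 + C \bigl(1 + \|\nabla v\|_{L^2}^2 + \|\Delta v\|_{L^2}^2\bigr)\bigl(1 + \|\nabla v\|_{L^2}^2 \|\Delta v\|_{L^2}^2\bigr)$ — i.e. a quantity whose integrand is controlled, on $[0,t\wedge\tau_\kappa]$, by the exponent appearing in the definition \eqref{eq:tau:kappa} of $\tau_\kappa$. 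This is the step where the careful choice of the three terms inside $\tau_\kappa$ pays off.

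Next I would introduce the time weight. Writing $Z(t) = \|\Delta v(t)\|_{L^2}^2$ and $g(t) = t Z(t)$, the product rule gives $d(tZ) = Z\,dt + t\,dZ$, so that
\begin{align*}
d(tZ) + 2t\|\nabla\Delta v\|_{L^2}^2 dt \le \Bigl( Z + t\, C (1 + \|\nabla v\|_{L^2}^2 + Z)(1 + \|\nabla v\|_{L^2}^2 Z) \Bigr) dt + t\, dM_{t,H^2}.
\end{align*}
Rather than working with $\log(1+tZ)$ directly (whose It\^o correction is awkward because of the explicit $t$), I would instead feed this into a Gr\"onwall-type estimate: on $[0,t\wedge\tau_\kappa]$ the coefficient multiplying $tZ$ in the right-hand side is $C(1 + \|\nabla v\|_{L^2}^2)$, whose time integral is at most $C\kappa$ by definition of $\tau_\kappa$; the remaining forcing terms ($Z\,dt$ and the lower-order pieces) integrate, again by the definition of $\tau_\kappa$, to at most $C(\kappa + t)$. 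Applying the stochastic Gr\"onwall lemma (or: taking expectations after a localization, using that $\int_0^{t\wedge\tau_\kappa}(1+\|\nabla v\|_{L^2}^2)ds \le t + \kappa$ and an exponential integrating factor $\exp(-C\int_0^s(1+\|\nabla v\|^2)dr)$ which is bounded below by $\exp(-C\kappa)$) yields
\begin{align*}
\EE \sup_{s \in [0, t\wedge\tau_\kappa]} s Z(s) \le C \exp(C\kappa) \Bigl( \EE \int_0^{t\wedge\tau_\kappa} (Z + \text{l.o.t.})\,ds + \EE\sup_s \Bigl| \int_0^s r\, dM_{r,H^2}\Bigr| \Bigr).
\end{align*}
The forcing integral is $\le C\exp(C\kappa)(t+\kappa)$ by the $\tau_\kappa$ bound; note that here we crucially only integrate $Z$ (not $tZ$), so the time weight is not needed to make this term finite — it is needed only to kill the singular prefactor $1/t$ that would otherwise appear if one tried to bound $Z$ itself near $0$ (where we have no a priori control, since $v_0 \in V$ only).

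For the martingale term I would use Burkholder--Davis--Gundy: $\EE\sup_{s\le t\wedge\tau_\kappa}|\int_0^s r\,dM_{r,H^2}| \le C\,\EE\bigl(\int_0^{t\wedge\tau_\kappa} s^2 \sum_k\langle\Delta\sigma_k(v),\Delta v\rangle^2\,ds\bigr)^{1/2} \le C\,\EE\bigl(\int_0^{t\wedge\tau_\kappa} s^2 (1+Z) Z\,ds\bigr)^{1/2}$, and then split $s^2 Z^2 = (sZ)(sZ)$, pull out a factor $\sup_{s\le t\wedge\tau_\kappa} sZ(s)$ under the square root, and absorb it into the left-hand side via Young's inequality $ab \le \delta a^2 + \delta^{-1}b^2$ at the cost of a factor $\exp(C\kappa)$ and an extra $\int_0^{t\wedge\tau_\kappa}(1+Z)ds \le C(t+\kappa)$. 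Assembling these pieces gives \eqref{eq:H2:loc:moment}. \textbf{Main obstacle.} The genuinely delicate point is the nonlinear estimate for $\langle A(v\cdot\grad v + w\dz v), \Delta v\rangle$ — three derivatives land on the quadratic term, $w = -\int_0^z \grad\cdot v$ is nonlocal, and the anisotropy (vertical versus horizontal) must be exploited via anisotropic H\"older and Ladyzhenskaya inequalities (as in the estimate of $I_2$ in Lemma~\ref{lem:L:moments:local}, but one order higher) so that every term is either absorbed by $\|\nabla\Delta v\|_{L^2}^2$ or reduced to the combination $(1+\|\nabla v\|_{L^2}^2 + \|\Delta v\|_{L^2}^2)(1 + \|\nabla v\|_{L^2}^2\|\Delta v\|_{L^2}^2)$ that $\tau_\kappa$ was tailored to control; getting the exact admissible form is the crux, and the rest is bookkeeping with BDG and (stochastic) Gr\"onwall.
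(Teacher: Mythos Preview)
Your proposal is correct and follows essentially the same route as the paper: apply the It\^o formula to $t\|\Delta v\|_{L^2}^2$, integrate the nonlinearity by parts against $\nabla\Delta v$, use interpolation/anisotropic H\"older to reduce to a quantity the stopping time $\tau_\kappa$ controls, then close with BDG plus the stochastic Gr\"onwall lemma.

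One small slip worth flagging: with your stated nonlinear bound $(1+\|\nabla v\|_{L^2}^2+Z)(1+\|\nabla v\|_{L^2}^2 Z)$, the coefficient multiplying $tZ$ in the Gr\"onwall step is \emph{not} $C(1+\|\nabla v\|_{L^2}^2)$ as you wrote, but rather $C(1+\|\nabla v\|_{L^2}^2+\|\nabla v\|_{L^2}^4+\|\nabla v\|_{L^2}^2 Z)$, the dominant piece being $\|\nabla v\|_{L^2}^2\|\Delta v\|_{L^2}^2$. This is still integrable to $C\kappa$ on $[0,t\wedge\tau_\kappa]$ by the definition \eqref{eq:tau:kappa} (and $\|\nabla v\|^4\le C\|\nabla v\|^2\|\Delta v\|^2$ by Poincar\'e), so the argument goes through unchanged. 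The paper in fact obtains the sharper nonlinear remainder $C\,t\|\nabla v\|_{L^2}^2\|\Delta v\|_{L^2}^4 = (tZ)\cdot C\|\nabla v\|_{L^2}^2\|\Delta v\|_{L^2}^2$, which makes the Gr\"onwall coefficient transparent from the outset; your weaker bound is fine but you should state the coefficient correctly.
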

\begin{proof}[Proof of Lemma~\ref{lem:moments:local:H2}]
Applying $-\Delta$ to \eqref{eq:P:1} and using the It\=o Lemma, we obtain
\begin{align}
d (t \| \Delta v\|_{L^2}^2) +2  t \| \nabla \Delta v\|_{L^2}^2 dt 
&= \| \Delta v\|_{L^2}^2 dt - 2 t  \la v \cdot \grad v + w \dz v, \Delta^2 v \ra dt + t \| \Delta \sigma(v)\|_{L^2}^2 dt \notag\\
& \qquad + 2t \la \Delta \sigma(v), \Delta v \ra dW_t.
\label{eq:H2:local:1}
\end{align}
The nonlinear term in \eqref{eq:H2:local:1} is bounded as 
\begin{align}
& 2t \left|  \la v \cdot \grad v + w \dz v, \Delta^2 v \ra \right|  \notag\\
&\qquad \leq 2 t \| \nabla \Delta v \|_{L^2} \left( \| \nabla( v \cdot \grad v) \|_{L^2} + \| \nabla (w \dz v) \|_{L^2} \right) = 2 t \| \nabla \Delta v \|_{L^2} (T_v + T_w) \label{eq:H2:local:2}
\end{align}
First, we estimate the $v$-term as
\begin{align}
T_v &\leq  \| \nabla v \cdot \grad v \|_{L^2} + \|  v \cdot \grad \nabla v \|_{L^2} \notag\\
&\leq   \| \nabla v \|_{L^4} \| \grad v \|_{L^4} + \| v\|_{L^\infty}  \| \grad \nabla v \|_{L^2} \leq C  \| \nabla v\|_{L^2}^{1/2} \| \Delta v\|_{L^2}^{3/2} \label{eq:H2:local:3}.
\end{align}
For the $w$-term we write 
\begin{align}
 T_w 
 &\leq \| \nabla w  \dz v \|_{L^2} + \| w \dz \nabla v \|_{L^2} \notag\\
 &\leq \| \nabla w \|_{L^\infty_z L^4_x} \| \dz v\|_{L^2_z L^4_x} + \| w \|_{L^\infty_z L^4_x} \|\dz \nabla v\|_{L^2_z L^4_x} \notag\\
 &\leq C \| \nabla v \|_{L^2}^{1/2} \| \Delta v \|_{L^2} \| \nabla \Delta v \|_{L^2}^{1/2}
 \label{eq:H2:local:4}
\end{align}
by appealing to the 2D Gagliardo-Nirenberg and the Poincar\'e inequalities.
Combining \eqref{eq:H2:local:1}--\eqref{eq:H2:local:4} and using the Poincar\'e  and the $\eps$-Young inequalities, we obtain
\begin{align}
d (t \| \Delta v\|_{L^2}^2) + t \| \nabla \Delta v\|_{L^2}^2 dt 
&\leq \| \Delta v\|_{L^2}^2 dt + C t \| \nabla v\|_{L^2}^2 \|\Delta v\|_{L^2}^4 dt + t \| \Delta \sigma(v)\|_{L^2}^2 dt \notag\\
&\qquad + 2t \la \Delta \sigma(v), \Delta v \ra dW_t.
\label{eq:H2:local:5}
\end{align}
The Burkholder-Davis-Gundy inequality implies that for any $0 \leq \tau_{a} \leq \tau_{b} \leq  t \wedge \tau_{\kappa}$
\begin{align}
&\EE \sup_{s \in [\tau_{a}, \tau_{b}]} \left|\int_{\tau_{a}}^{s} r \la \Delta \sigma(v), \Delta v \ra dW_{r} \right| \notag\\
&\leq C \EE \left|\int_{\tau_{a}}^{\tau_{b}} s^{2} |\la \Delta \sigma(v), \Delta v \ra|^{2} ds \right|^{1/2} 
\leq \frac{1}{2} \EE \sup_{s \in [\tau_{a},\tau_{b}] }(s \| \Delta v \|^{2}) + C \EE \int_{\tau_{a}}^{\tau_{b}}  s (1 +\| \Delta v \|^{2}) ds 
\label{eq:H2:local:6}
\end{align}
We integrate \eqref{eq:H2:local:5} from $\tau_{a}$ to $s$, take a supremum over $[\tau_{a}, \tau_{b}]$, apply expected values, and use 
\eqref{eq:H2:local:6} to obtain
\begin{align*}
 \EE& \left(\sup_{s \in [\tau_{a}, \tau_{b}] }(s \| \Delta v \|^{2})  \right) \notag\\
  &\leq 2\EE (\tau_{a} \| \Delta v(\tau_{a}) \|^{2})  + C \EE \int_{\tau_{a}}^{\tau_{b}}\left((1+ t)(1+\| \Delta v\|_{L^2}^2) + C t\| \nabla v\|_{L^2}^2\|\Delta v\|_{L^2}^{4}  \right)dt \notag\\
  &\leq 2\EE (\tau_{a} \| \Delta v(\tau_{a}) \|^{2})  + C \EE \int_{\tau_{a}}^{\tau_{b}}\left( (1 + | \nabla v\|_{L^2}^2\|\Delta v\|_{L^2}^{2})t\| \Delta v\|_{L^2}^2 + (1 + t) + \| \Delta v\|_{L^2}^2 \right)dt.
\end{align*}
and hence with a version of the stochastic Gronwall lemma, \cite{GZ1} and the definition of $\tau_{\kappa}$ we 
obtain \eqref{eq:H2:loc:moment} concluding the proof.
\end{proof}
Finally, using Lemma~\ref{lem:moments:local:H2}, we obtain estimates on the stopping time
$\rho_{\lambda}(v_{0})$.
\begin{lemma}[\bf Estimate on $\rho_{\lambda}$]
\label{lem:rho:lambda}
 Let $v_0\in V$, $\kappa \geq 2e, \lambda > 0$, and $t>0$.  For any $\gamma> 2 + \|v_0\|_{L^6}^4$, we have
 \begin{align}
\Prb( \rho_{\lambda}(v_{0}) < t)
&\leq \frac{C \exp( C\kappa) (t + 1)}{\lambda} + \frac{C}{\log \kappa}  \left( \EE \log(1 + \|v_{0}\|^{2}_{H^{1}}) + (1+\gamma)(1+ t) \right)  \notag\\
&\qquad + \frac{C}{\log \gamma}  \Bigl( \EE \log(1+ \| v_0\|_{H^1}^2) + \EE \| v_0\|_{L^2}^2 + t \Bigr),
\label{eq:rho:lambda:bnd}
 \end{align}
for a sufficiently large universal constant $C$. In particular for $\lambda >0$ sufficiently large, with  $\kappa = \log (\sqrt{\lambda})/C$, where $C$ is the constant appearing in \eqref{eq:rho:lambda:bnd}, and with $\gamma= \sqrt{\log \kappa}$ we have
 \begin{align}
   \Prb( \rho_{\lambda}(v_{0}) < t) \leq \frac{C}{\log \log \log( \lambda)}(\EE \| v_0\|_{H^1}^2 + t +1).
   \label{eq:rho:lambda:bound}
 \end{align}
\end{lemma}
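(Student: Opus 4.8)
The plan is to decompose the event $\{\rho_\lambda(v_0) < t\}$ according to whether the auxiliary stopping time $\tau_\kappa(v_0)$ from \eqref{eq:tau:kappa} has already occurred, and to control the two resulting pieces by Lemma~\ref{lem:moments:local:H2} and Lemma~\ref{lem:tau:kappa} respectively. Concretely, I would start from
\begin{align*}
\Prb(\rho_\lambda(v_0) < t) \leq \Prb\bigl(\rho_\lambda(v_0) < t,\ \tau_\kappa(v_0) \geq t\bigr) + \Prb\bigl(\tau_\kappa(v_0) < t\bigr).
\end{align*}
The second term is handled verbatim by Lemma~\ref{lem:tau:kappa}: for $\kappa \geq 2e$ and $\gamma > 2 + \|v_0\|_{L^6}^4$ it already produces exactly the last two lines of the claimed bound \eqref{eq:rho:lambda:bnd}.

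For the first term I would observe that on the event $\{\tau_\kappa(v_0) \geq t\}$ one has $t \wedge \tau_\kappa(v_0) = t$, so that $\rho_\lambda(v_0) < t$ forces $\sup_{s \in [0,\, t\wedge\tau_\kappa(v_0)]} s\|v(s,v_0)\|_{H^2}^2 \geq \lambda$. Since $v$ has zero average over $\TT^3$ and periodic boundary conditions, the Poincar\'e inequality gives $\|v\|_{H^2}^2 \leq C\|\Delta v\|_{L^2}^2$, whence this event is contained in $\{\sup_{s \in [0,\,t\wedge\tau_\kappa(v_0)]} s\|\Delta v(s,v_0)\|_{L^2}^2 \geq \lambda/C\}$. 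By the Chebyshev inequality and the local $H^2$ moment bound \eqref{eq:H2:loc:moment},
\begin{align*}
\Prb\bigl(\rho_\lambda(v_0) < t,\ \tau_\kappa(v_0) \geq t\bigr) \leq \frac{C}{\lambda}\, \EE \sup_{s \in [0,\,t\wedge\tau_\kappa(v_0)]} s\|\Delta v(s,v_0)\|_{L^2}^2 \leq \frac{C\exp(C\kappa)(t+\kappa)}{\lambda},
\end{align*}
and absorbing the polynomial factor $\kappa$ into the exponential via $t+\kappa \leq 2(t+1)e^\kappa$ (after relabeling $C$) yields the first term of \eqref{eq:rho:lambda:bnd}. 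Collecting the three contributions proves \eqref{eq:rho:lambda:bnd}.

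It remains to optimize the free parameters. Taking $\kappa = \log(\sqrt\lambda)/C$ makes $\exp(C\kappa) = \sqrt\lambda$, so the first term becomes $C(t+1)/\sqrt\lambda$, which is negligible against everything else. With the further choice $\gamma = \sqrt{\log\kappa}$ one has $\log\kappa \sim \log\log\lambda$, hence $(1+\gamma)(1+t) \sim \sqrt{\log\log\lambda}\,(1+t)$ and the term $C(1+\gamma)(1+t)/\log\kappa$ decays like $(1+t)/\sqrt{\log\log\lambda}$, while $\log\gamma = \tfrac12\log\log\kappa \sim \log\log\log\lambda$. Thus the genuinely slowest contribution is $\frac{C}{\log\gamma}\bigl(\EE\log(1+\|v_0\|_{H^1}^2) + \EE\|v_0\|_{L^2}^2 + t\bigr)$, and using $\log(1+\|v_0\|_{H^1}^2) \leq \|v_0\|_{H^1}^2$ together with the triple-logarithm asymptotics above bounds the whole expression by $\frac{C}{\log\log\log\lambda}\bigl(\EE\|v_0\|_{H^1}^2 + t + 1\bigr)$ for $\lambda$ large, which is \eqref{eq:rho:lambda:bound}. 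One only needs to check that $\lambda$ large (depending on $v_0$) makes $\kappa \geq 2e$ and $\gamma = \sqrt{\log\kappa} > 2 + \|v_0\|_{L^6}^4$, so that Lemmas~\ref{lem:moments:local:H2} and~\ref{lem:tau:kappa} are applicable.

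The main obstacle here is conceptual rather than computational: it is the mismatch between the two inputs. The local $H^2$ moment bound \eqref{eq:H2:loc:moment} degrades \emph{exponentially} in the parameter $\kappa$, whereas the estimate on $\tau_\kappa$ improves only \emph{logarithmically} in $\kappa$; this forces the near-optimal choice $\kappa \sim \log\lambda$, at which the $\tau_\kappa$-term is already only $O(1/\log\log\lambda)$, and then the sub-optimization in $\gamma$ (itself dictated by the $\sigma_\gamma$-estimate of Lemma~\ref{lem:sigma:gamma}) costs yet another logarithm, yielding the triple iterated logarithm in \eqref{eq:rho:lambda:bound}. Each individual step is a routine Chebyshev-type manipulation; the care required lies entirely in the bookkeeping of these nested logarithmic rates and in verifying the parameter constraints imposed by the cited lemmas.
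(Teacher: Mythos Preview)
Your proof is correct and follows essentially the same approach as the paper: decompose according to whether $\tau_\kappa(v_0)$ has occurred, apply Chebyshev together with Lemma~\ref{lem:moments:local:H2} on the good event and Lemma~\ref{lem:tau:kappa} on the bad event, then optimize $\kappa$ and $\gamma$ exactly as you describe. You are in fact slightly more careful than the paper in handling the Poincar\'e step $\|v\|_{H^2}^2\leq C\|\Delta v\|_{L^2}^2$ and in absorbing the factor $(t+\kappa)$ into $(t+1)e^{C\kappa}$, and your observation that ``$\lambda$ sufficiently large'' must in fact depend on $\|v_0\|_{L^6}$ (so that $\gamma=\sqrt{\log\kappa}>2+\|v_0\|_{L^6}^4$) is a valid point that the paper leaves implicit.
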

\begin{proof}[Proof of Lemma~\ref{lem:rho:lambda}]
Using Lemma~\ref{lem:moments:local:H2} and Lemma~\ref{lem:tau:kappa}, we have
\begin{align*}
\Prb ( \rho_{\lambda}(v_{0}) < t)  
&\leq \Prb \left( \sup_{s \in [0,  t] } s \| \Delta v(s) \|^{2}  \geq \lambda \right) \notag\\
&\leq \frac{1}{\lambda}\EE \left( \sup_{s \in [0,  t \wedge \tau_{\kappa}] } s \| \Delta v(s) \|^{2}  \right) + \Prb( \tau_{\kappa} < t) \notag\\
&\leq	\frac{C \exp( C\kappa) (t + \kappa)}{\lambda} + \frac{C}{\log \kappa}  \left( \EE \log(1 + \|v_{0}\|^{2}_{H^{1}}) + (1+\gamma)(1+ t) \right)  \notag\\
&\qquad+ \frac{C}{\log \gamma}  \Bigl( \EE \log(1+ \| v_0\|_{H^1}^2) + \EE \| v_0\|_{L^2}^2 + t \Bigr)
\end{align*}
and the desired estimate is obtained.
\end{proof}

\section{Feller property} \label{sec:Feller}
Equipped with the moment bounds and the estimates on the stopping time $\sigma_\gamma$ established in Section~\ref{sec:weak:moment}, we now have the necessary tools to establish the Feller property for the Markov semigroup $P_t$, i.e., to give the proof of Theorem~\ref{thm:Feller}. The continuous dependence on the initial data in the topology of $V$ is given quantitatively in the next lemma.

\begin{lemma}[\bf The difference of two solutions in the topology of $V$]
 \label{lem:cont:dependence:V}
 Let $v_0^{(i)} \in V$, where $i\in \{1,2\}$, and denote by $v^{(i)}(t)$ the corresponding strong solution of the 3D stochastic Primitive Equations \eqref{eq:P:1}--\eqref{eq:BC:3}. Then, for any $\kappa, t>0$, we have
\begin{align}
  \EE \sup_{s \in [0, t \wedge \tau_{\kappa}(v_0^{(1)},v_0^{(2)})]} \| \nabla v^{(1)}(s) - \nabla v^{(2)}(s)\|^{2}_{L^2}
  \leq C e^{C(\kappa+t)}   \EE \|\nabla v^{(1)}_0 - \nabla v^{(2)}_0\|^{2}_{L^2} 
   \label{eq:cont:dependence:V}
\end{align}
for some positive constant $C$, where the stopping time $\tau_\kappa(\cdot,\cdot)$ is  defined in \eqref{eq:tau:kappa:2} above.
\end{lemma}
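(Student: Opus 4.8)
The plan is to derive an evolution equation for the difference $V := v^{(1)} - v^{(2)}$ by subtracting the two copies of \eqref{eq:P:1}, and then apply the It\=o lemma in $H^1$, i.e., to $\|\nabla V\|_{L^2}^2$. Writing $W := w^{(1)} - w^{(2)}$ (recovered from $V$ via \eqref{eq:w}) and $P := p^{(1)} - p^{(2)}$, the difference equation reads
\begin{align*}
dV + \bigl( -\Delta V + v^{(1)} \cdot \grad V + V \cdot \grad v^{(2)} + w^{(1)} \dz V + W \dz v^{(2)} + \grad P \bigr) dt = \bigl( \sigma(v^{(1)}) - \sigma(v^{(2)}) \bigr) dW.
\end{align*}
Applying $\nabla$ and the $L^2$ It\=o lemma, the Laplacian term contributes $-2\|\Delta V\|_{L^2}^2 dt$ as in \eqref{eq:H1:Ito}, the pressure term drops out by the same cancellation $\langle \grad P, \Delta V \rangle = 0$ used in Lemma~\ref{lem:L:moments:local} (since $P$ depends only on $x$ and $\grad \cdot MV = 0$), and the noise It\=o correction $\|\nabla(\sigma(v^{(1)}) - \sigma(v^{(2)}))\|_{L_2(L^2)}^2 \leq C\|\nabla V\|_{L^2}^2$ by \eqref{eq:sigma:cond:Lip:quant} with $j=1$.

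The core of the argument is then estimating the four nonlinear terms $\langle v^{(1)} \cdot \grad V, \Delta V\rangle$, $\langle V \cdot \grad v^{(2)}, \Delta V\rangle$, $\langle w^{(1)} \dz V, \Delta V\rangle$, and $\langle W \dz v^{(2)}, \Delta V\rangle$, absorbing every occurrence of $\|\Delta V\|_{L^2}$ into the good term $-2\|\Delta V\|_{L^2}^2$ via $\eps$-Young. The terms involving derivatives of $V$ only (the first and third) are handled exactly as $I_1$ and $I_2$ in Lemma~\ref{lem:L:moments:local}, producing factors like $\|v^{(1)}\|_{L^6}^4$ and $\|\dz v^{(1)}\|_{L^2}^2 \|\nabla \dz v^{(1)}\|_{L^2}^2$ times $\|\nabla V\|_{L^2}^2$. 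The terms with derivatives of $v^{(2)}$ (the second and fourth) are the genuinely new ones: for $\langle V \cdot \grad v^{(2)}, \Delta V\rangle$ one distributes the $L^6$--$L^3$--$L^2$ H\"older inequality so that the top-order derivative lands on $v^{(2)}$ as $\|\grad v^{(2)}\|_{L^3} \lesssim \|\nabla v^{(2)}\|_{L^2}^{1/2}\|\Delta v^{(2)}\|_{L^2}^{1/2}$, after interpolating $\|V\|_{L^6} \lesssim \|\nabla V\|_{L^2}$ and keeping one factor $\|\Delta V\|_{L^2}$; similarly for the $W$-term one uses the anisotropic H\"older inequality as in \eqref{eq:AH1:2} with the vertical-mean structure $W = -\int_0^z \grad\cdot V$, pushing derivatives onto $v^{(2)}$ and picking up $\|\dz v^{(2)}\|$-type norms controlled on the interval $[0, t\wedge \tau_\kappa]$. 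The upshot is a differential inequality of the shape
\begin{align*}
d \| \nabla V\|_{L^2}^2 + \|\Delta V\|_{L^2}^2 dt \leq C \, g(t) \, \| \nabla V\|_{L^2}^2 \, dt + dM_t,
\end{align*}
where the process $g(t)$ is built from $1 + \|\nabla v^{(i)}\|_{L^2}^2 + \|\Delta v^{(i)}\|_{L^2}^2 + \|\nabla v^{(i)}\|_{L^2}^2 \|\Delta v^{(i)}\|_{L^2}^2$ for $i \in \{1,2\}$, and $M_t$ is the martingale coming from the noise difference, with quadratic variation bounded by $C\int \|\nabla V\|_{L^2}^4 dt$.

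To finish, observe that by the definition \eqref{eq:tau:kappa}--\eqref{eq:tau:kappa:2} of $\tau_\kappa(v_0^{(1)}, v_0^{(2)})$ we have $\int_0^{t\wedge\tau_\kappa} g(s)\, ds \leq C(1+\kappa)$ almost surely, which is precisely the quantity controlled by that stopping time. Hence a Gronwall-type argument applies: one integrates, takes the supremum over $[0, t\wedge\tau_\kappa]$, applies expectations, uses Burkholder-Davis-Gundy on $M_t$ together with an $\eps$-Young step to absorb $\EE\sup \|\nabla V\|_{L^2}^2$ back into the left side, and then invokes the stochastic Gronwall lemma (the version from \cite{GZ1} already cited in the proof of Lemma~\ref{lem:moments:local:H2}) to obtain
\begin{align*}
\EE \sup_{s\in[0,t\wedge\tau_\kappa]} \|\nabla V(s)\|_{L^2}^2 \leq \EE \|\nabla V(0)\|_{L^2}^2 \, \exp\bigl( C(\kappa + t)\bigr),
\end{align*}
which is exactly \eqref{eq:cont:dependence:V}. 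The main obstacle is the careful bookkeeping in the nonlinear estimates for the terms carrying derivatives of $v^{(2)}$: one must be sure that every factor of $\|\Delta V\|_{L^2}$ appears to a power strictly less than $2$ so it can be absorbed, while all higher-order norms of $v^{(1)}$ and $v^{(2)}$ are collected into $g(t)$ in a combination that is integrable up to $\tau_\kappa$ by construction — in particular the mixed norm $\|\nabla v^{(i)}\|_{L^2}^2\|\Delta v^{(i)}\|_{L^2}^2$ arising from the anisotropic estimate of the $w$-type term is the reason that precise term was included in the definition of $\tau_\kappa$.
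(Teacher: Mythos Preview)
Your proposal is correct and follows essentially the same approach as the paper. The only cosmetic difference is that you keep the four nonlinear terms paired with $\Delta V$, whereas the paper distributes the $\partial_i$ derivative via Leibniz (using the cancellation $\langle v^{(1)}\cdot\grad\partial_i V + w^{(1)}\dz\partial_i V,\partial_i V\rangle=0$) to obtain six terms tested against $\partial_i V$; after one integration by parts these are equivalent, and both routes lead to the same Gronwall structure with coefficient controlled by $\int_0^{t\wedge\tau_\kappa}(1+\|\nabla v^{(i)}\|_{L^2}^2+\|\Delta v^{(i)}\|_{L^2}^2+\|\nabla v^{(i)}\|_{L^2}^2\|\Delta v^{(i)}\|_{L^2}^2)\,ds$ and the same BDG/stochastic Gronwall finish.
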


\begin{remark}[\bf Feller property for additive noise]
Typically, e.g.~for the 2D Navier-Stokes equations with additive noise, the Feller property follows directly from the dominated convergence theorem. In this much simpler situation, the noise terms exactly cancel when one caries out continuous dependence estimates, and one obtains a bound like \eqref{eq:cont:dependence:V}
pathwise.  Note moreover that in this case, since the phase space is usually $L^{2}$, the exponent $\kappa$ in \eqref{eq:cont:dependence:V} only involves one of the two solutions due to cancelation. 

For the 3D Primitive Equations if we were to restrict ourselves to the case of additive noise, a suitably modified version of the above lemma can also be obtained pathwise. Indeed, for additive noise the right side of \eqref{eq:P:diff} below equals $0$, and hence there is no need to take expected values, and we obtain \eqref{eq:cont:dependence:V} almost surely. Nevertheless, the bound we obtain still depends on both the norms of $v_0^{(1)}$ and of $v_0^{(2)}$, so an involved estimate like \eqref{eq:split:Lip:approx1} below is still needed. The are other approaches to overcome this difficulty, but the advantage of the analysis we give below is that it is suitable for treating multiplicative noise.
\end{remark}

The proof of the the above lemma is based on the stochastic Gr\"onwall lemma and energy estimates. For clarity of the presentation we present the details of the proof at the end of this section. We now give the proof of the Feller property in $V$, assuming Lemma~\ref{lem:cont:dependence:V} holds. 
\begin{proof}[Proof of Theorem~\ref{thm:Feller}]
Fix $t>0$ and $\phi \in C_b(V)$ and any $v_{0} \in V$. Note that the norms $\|v\|_{H^1}$ and $\|\nabla v\|_{L^2}$ yield equivalent topologies on $V$ in view of the Poincar\'e inequality. Given an arbitrary $\eps>0$, we need to find $\delta  \in (0,1)$, such that 
\begin{align*} 
  |P_{t} \phi(v_{0}) - P_{t} \phi(\tilde{v}_{0})| = | \EE ( \phi(v(t,v_0)) - \phi(v(t,\tilde v_0)) ) |  \leq \eps
\end{align*}
holds for any  $\tilde{v}_{0} \in B_{V}(\delta,v_{0})$, where $B_{V}(\delta,v_{0})$ denotes the ball of radius $\delta$ in $V$
around $v_{0}$.

In order to use \eqref{eq:cont:dependence:V} we approximate $\phi$ in a suitable
way with a Lipschitz continuous function on $V$, and then employ a delicate stopping time argument
involving $\sigma_{\gamma}$, $\tau_{\kappa}$, and $\rho_{\lambda}$ introduced above in 
\eqref{eq:sigma:gamma}, \eqref{eq:tau:kappa}, and \eqref{eq:rho:lambda}.  We introduce the space of Lipschitz continuous functions on $V$
\[
\Lip(V) = \{\tilde \phi \in C_b(V) \colon \tilde \phi \mbox{ is Lipschitz continuous on } V \}.
\]
Observe now that for any $\tilde{v}_{0} \in B_{V}(1,v_0)$ and $\kappa, \lambda > 0$ we have
\begin{align}
&|\EE (\phi(v(t, v_{0})) - \phi(v(t,\tilde{v}_{0})))| \notag\\
&\qquad \leq |\EE (\phi(v(t, v_{0})) - \phi(v(t,\tilde{v}_{0}))) \indFn{\rho_{\lambda}(v_{0},\tilde{v}_{0}) > t} | + |\EE (\phi(v(t, v_{0})) - \phi(v(t,\tilde{v}_{0}))) \indFn{\rho_{\lambda}(v_{0},\tilde{v}_{0}) \leq t}| \notag\\
&\qquad \leq  |\EE (\phi(v(t, v_{0})) - \phi(v(t,\tilde{v}_{0}))) \indFn{\rho_{\lambda}(v_{0},\tilde{v}_{0}) > t} | + 2 \|\phi\|_{\infty} \Prb(\rho_{\lambda}(v_{0},\tilde{v}_{0}) \leq t) \notag\\         
&\qquad \leq |\EE (\phi(v(t, v_{0})) - \phi(v(t,\tilde{v}_{0}))) \indFn{\rho_{\lambda}(v_{0},\tilde{v}_{0}) > t}  \indFn{\tau_{\kappa}(v_{0},\tilde{v}_{0}) \geq t}| \notag\\
&\qquad \qquad + 2\|\phi\|_{\infty} \Prb(\tau_{\kappa}(v_{0},\tilde{v}_{0}) < t) +   2 \|\phi\|_{\infty} \Prb(\rho_{\lambda}(v_{0},\tilde{v}_{0}) \leq t) \notag\\
&\qquad = T_{1} + T_{2} +T_{3},\label{eq:split:Lip:approx1}
\end{align}
where the stopping time $\rho_{\lambda}(v_{0},\tilde{v}_{0})$ is defined in \eqref{eq:rho:lambda}, and $\tau_{\kappa}(v_0,\tilde v_0)$ is defined in \eqref{eq:tau:kappa}.

To address $T_{1}$ in \eqref{eq:split:Lip:approx1} we approximate the given $\phi \in C_{b}(V)$ by a 
an element $\tilde \phi \in \Lip(V)$  to be chosen below. Note that on the set $\{ \rho_\lambda(v_0,\tilde v_0) > t\}$ we have $v(t,v_0), v(t,\tilde v_0) \in B_{H^2}(\kappa/t)$. Hence, for any 
$ \lambda, \kappa > 0$ we estimate
\begin{align}
T_{1} 
&\leq 2 \sup_{v \in B_{H^{2}}(\kappa/t)}  | \phi(v) - \tilde \phi (v)| + |\EE (\tilde \phi(v(t, v_{0})) - \tilde \phi(v(t,\tilde{v}_{0}))) \indFn{\tau_{\kappa}(v_{0},\tilde{v}_{0}) > t} |  \notag\\
&\leq 2 \sup_{v \in B_{H^{2}}(\kappa/t)}  | \phi(v) - \tilde \phi (v)|  +  \| \nabla \tilde\phi \|_{\infty} \EE \|  \nabla v(t \wedge \tau_{\kappa}(v_0,\tilde{v}_{0}),v_{0} ) - \nabla v(t\wedge \tau_{\kappa}(v_0,\tilde{v}_{0}),\tilde{v}_{0}) \|_{L^2} 
\label{eq:split:Lip:approx2}
\end{align}
where $\| \nabla \tilde \phi  \|_{\infty}$ is the Lipschitz constant of $\tilde \phi $.  
By Lemma~\ref{lem:cont:dependence:V} and Jensen's inequality we have  
\begin{align} 
\EE \|  \nabla v(t \wedge \tau_{\kappa}(v_0,\tilde{v}_{0}),v_{0} ) - \nabla v(t\wedge \tau_{\kappa}(v_0,\tilde{v}_{0}),\tilde{v}_{0}) \|_{L^2} 
\leq  C e^{C (\kappa + t)} \| \nabla v_0 - \nabla \tilde v_0\|_{L^2}. \label{eq:Feller:1}
\end{align}
Combining \eqref{eq:split:Lip:approx2} with \eqref{eq:Feller:1} we finally conclude
\begin{align*}
T_{1} \leq 2 \sup_{v \in B_{H^{2}}(\kappa/t)}  | \phi(v) - \tilde \phi (v)|   + \| \nabla \tilde \phi \|_{\infty} C e^{C (\kappa + t)} \|  v_0 - \tilde v_0\|_{H^{1}}.
\end{align*}
We now bound $T_{2}$.  According to  \eqref{eq:loglog:bnd:tau:kappa}, we have for any $\kappa >0$ sufficiently large 
\begin{align} 
T_2 &\leq 2 \| \phi\|_{\infty} \Prb(\tau_\kappa(v_0,\tilde v_0) < t)  \notag\\
&\leq 2 \| \phi\|_{\infty}  \Bigl( \Prb (\tau_\kappa(v_0) < t ) + \Prb(\tau_\kappa(\tilde v_0) < t) \Bigr)
\leq \frac{C \| \phi\|_{\infty} }{\log \log \kappa}  \Bigl(\| v_{0}\|_{H^{1}}^{2} + t + 1 \Bigr).\label{eq:Feller:2}
\end{align}
Finally, we estimate $T_3$. Here with \eqref{eq:rho:lambda:bound} we obtain
\begin{align*}
  T_{3} \leq  2 \|\phi\|_{\infty} \bigl(\Prb(\rho_{\lambda}(\tilde{v}_{0}) \leq t) +\Prb(\rho_{\lambda}(v_{0}) \leq t) \bigr)
  \leq \frac{C \|\phi\|_{\infty}}{\log \log \log \lambda}  \Bigl(\| v_{0}\|_{H^{1}}^{2} +   t + 1 \Bigr)
\end{align*}
It remains to choose our parameters in an appropriate order. First,  let $\kappa = (\log \lambda)/C$, for a sufficiently large universal constant $C$, so that
\begin{align} 
T_2 + T_3 \leq \frac{C \|\phi\|_{\infty}}{\log \log \log \lambda}  \Bigl(\| v_{0}\|_{H^{1}}^{2}   + t + 1 \Bigr)
\label{eq:Feller:17}.
\end{align}
Next, we choose $\lambda = \lambda(t, \|\phi\|_{\infty}, \|v_0\|_{H^1})$ sufficiently large, so that  
\[ 
T_2 + T_3 \leq \frac{\eps}{2}
\] 
This choice of $\lambda$ automatically fixes the value of $\kappa$, and hence also the radius of the ball $B_{H^2}(\kappa/t)$. Therefore, since this ball is a relatively compact subset of $V$, and $\phi \in C_b(V)$, we may choose $\tilde \phi \in \Lip(V)$ so that 
\[
2 \sup_{v \in B_{H^{2}}(\kappa/t)}  | \phi(v) - \tilde \phi (v)| \leq \frac{\eps}{4}
\]
This choice of $\tilde \phi$ also fixes $\|\nabla \tilde \phi\|_{\infty}$ and hence we may at last choose $\delta$ sufficiently small, so that
\[
\| \nabla \tilde \phi \|_{\infty} C e^{C (\kappa + t)} \|  v_0 - \tilde v_0\|_{H^{1}} \leq \frac{\eps}{5}
\]
holds for any $\tilde v_0 \in B_{V}(\delta,v_0)$. This concludes the proof of the Feller property in $V$. 
\end{proof}

We now turn to the proof of Lemma~\ref{lem:cont:dependence:V}.
\begin{proof}[Proof of Lemma~\ref{lem:cont:dependence:V}]
For $v_0^{(j)} \in V$, let  $(v^{(j)},w^{(j)},q^{(j)})$, for $j=1,2$, be the two corresponding strong solutions of the stochastic 3D Primitive Equations,  and 
define
\begin{align*} 
\max_{j=1,2} \| v^{(j)}(t)\|_{H^1} &=: M(t) \quad \mbox{and} \quad
\max_{j=1,2} \| v^{(j)}(t)\|_{H^2} =: \bar M(t),
\end{align*}
for all $t\in [0,T]$. Note that $M$ and $\bar M$ are random functions of time. Denote the difference of solutions as
\begin{align*} 
v = v^{(1)}-v^{(2)}, \quad w = w^{(1)}-w^{(2)}, \quad q = q^{(1)}-q^{(2)}.
\end{align*}
We also denote by
\begin{align*}
 R = \| \nabla v\|_{L^2} \quad \mbox{and} \quad \bar R = \| \Delta v\|_{L^2}
\end{align*}
the $H^1$ respectively $H^2$ norm of the difference (remainder).
The equation for the difference is
\begin{align} 
dv + \left(- \Delta v + v^{(1)} \cdot \grad v + w^{(1)} \dz v + v \cdot \grad v^{(2)} + w \dz v^{(2)} + \grad q \right) dt = \left( \sigma(v^{(1)}) - \sigma(v^{(2)}) \right) dW.
\label{eq:P:diff}
\end{align}
After applying $\nabla$ to \eqref{eq:P:diff} and then using the It\=o lemma in $L^2$, we obtain
\begin{align} 
\frac 12 d R^2 + {\bar R}^2 dt
&= -  \la \partial_i v^{(1)} \cdot \grad v, \partial_i v \ra dt
-  \la \partial_i w^{(1)} \dz v,  \partial_i v \ra dt
- \la \partial_i v \cdot \grad v^{(2)} , \partial_i v \ra  dt \notag \\
&\quad \quad 
-  \la \partial_i w \dz v^{(2)} , \partial_i v \ra dt
- \la v \cdot \grad \partial_i v^{(2)} , \partial_i v \ra dt
- \la w \dz \partial_i v^{(2)} , \partial_i v \ra  dt \notag \\
& \quad \quad + \sum_k \la \partial_i \sigma_k(v^{(1)}) - \partial_i \sigma_k(v^{(2)}), \partial_i v \ra dW^k \notag\\
&\quad \quad + \frac 12 \sum_k  \left( \la \partial_i \sigma_k(v^{(1)}) - \partial_i \sigma_k(v^{(2)}), \partial_i v \ra \right)^2 dt \notag\\
&=: - (T_1 + T_2 + T_3  + T_4 + T_5 + T_6) dt + T_7 dW + \frac 12 T_8 dt, \label{eq:diff:H1:1}
\end{align}
where the summation over the repeated index $i$ is over $i=1,2,3$. 
Note that the pressure term vanishes upon integrating by parts and using the boundary conditions.
We first estimate the  terms $T_1, \ldots , T_6$ on the right side of \eqref{eq:diff:H1:1} as
\begin{align}
T_1 &= \la \partial_i v^{(1)} \cdot \grad v , \partial_i v \ra
 \leq \| \nabla v^{(1)}\|_{L^3} \|\nabla v\|_{L^3}^2  \leq C (M \bar M)^{1/2} R \bar R \leq \frac{1}{12} \bar R^2 + C M \bar M R^2 
 \label{eq:diff:H1:T1}\\
T_2 &= \la \partial_i w^{(1)} \dz v , \partial_i v \ra
 \leq \| \nabla w^{(1)}\|_{L^2_x L^\infty_z} \|\nabla v\|_{L^4_x L^2_z}^2 \leq C \bar M R \bar R \leq \frac{1}{12} \bar R^2 + C \bar M^2 R^2 
  \label{eq:diff:H1:T2}\\
T_3 &= \la \partial_i v \cdot \grad v^{(2)} , \partial_i v \ra
 \leq \|\grad v^{(2)}\|_{L^3} \| \nabla v\|_{L^3}^2   \leq C (M \bar M)^{1/2} R \bar R \leq \frac{1}{12} \bar R^2 + C M \bar M R^2 
  \label{eq:diff:H1:T3}\\
T_4 &= \la \partial_i w \dz v^{(2)} , \partial_i v \ra
 \leq \| \nabla w\|_{L^2_x L^\infty_z} \|\dz v^{(2)}\|_{L^4_x L^2_z} \|\nabla v\|_{L^4_x L^2_z} \notag\\
& \leq C \bar R (M \bar M)^{1/2} (R \bar R)^{1/2} \leq \frac{\bar R^2}{12}  + C M^2 \bar M^2 R^2
 \label{eq:diff:H1:T4}\\
T_5 &= \la v \cdot \grad \partial_i v^{(2)} , \partial_i v \ra
 \leq \|v \|_{L^6} \|\nabla \dz v^{(2)}\|_{L^2} \|\nabla v\|_{L^3}  \leq C R \bar M (R \bar R)^{1/2} \leq \frac{\bar R^2}{12}  + {C \bar M^{4/3}  R^2}
  \label{eq:diff:H1:T5}\\
T_6 &= \la w \dz \partial_i v^{(2)} , \partial_i v \ra
 \leq \| w\|_{L^4_x L^\infty_z} \| \dz \nabla v^{(2)} \|_{L^2_x L^2_z} \|\nabla v\|_{L^4_x L^2_z}  \notag\\
&\leq C (R \bar R)^{1/2} \bar M (R \bar R)^{1/2} \leq \frac{\bar R^2}{12}  + C \bar M^2 R^2  \label{eq:diff:H1:T6}
\end{align}
for some sufficiently large positive constant $C$. The term $T_8$ is bounded using the Lipschitz assumption \eqref{eq:sigma:cond:Lip:quant} on $\sigma$ as 
\begin{align} 
T_8 \leq C \| \nabla v^{(1)} - \nabla v^{(2)}\|_{L^2}^2 \| \sigma (v^{(1)})-\sigma(v^{(2)}) \|_{H^1}^2 \leq C R^4 \leq C M^2 R^2. \label{eq:diff:H1:T8}
\end{align}

We now integrate \eqref{eq:diff:H1:1} from $0$ to $s$, take a supremum over $s \in [0, t\wedge \tau_\kappa]$, where $\tau_\kappa = \tau_\kappa(v_0^{(1)}, v_0^{(2)})$ is the stopping time defined in \eqref{eq:tau:kappa:2} above, appeal to \eqref{eq:diff:H1:T1}--\eqref{eq:diff:H1:T8}, and take expected values, to obtain
\begin{align} 
\frac 12 \EE \sup_{[0,t \wedge \tau_\kappa]} R^2 + \EE \int_0^{t\wedge \tau_\kappa} \bar R^2 ds 
& \leq \frac 12 \EE R(0)^2 + C \EE \int_0^{t \wedge \tau_\kappa} (1  + M^2 + \bar M^2 + M \bar M  + M^2 \bar M^2) R^2 ds \notag\\
&\qquad + \EE \sup_{s \in [0,t \wedge \tau_\kappa]} \left| \sum_k \int_0^s \la \partial_i \sigma_k(v^{(1)}) - \partial_i \sigma_k(v^{(2)}), \partial_i v \ra dW^k \right|.
\label{eq:diff:H1:2}
\end{align}
Using the Burkholder-Davis-Gundy inequality and the condition \eqref{eq:sigma:cond:Lip:quant} on the noise, we thus obtain 
\begin{align} 
\frac 12 \EE \sup_{[0,t \wedge \tau_\kappa]} R^2 + \EE \int_0^{t\wedge \tau_\kappa} \bar R^2 ds 
& \leq \frac 12 \EE R(0)^2 + C \EE \int_0^{t \wedge \tau_\kappa} ( 1 + M^2 + \bar M^2 +   M^2 \bar M^2) R^2 ds \notag\\
&\qquad + C \EE \left( \int_0^{t\wedge \tau_\kappa} R^4 ds \right)^{1/2} \notag\\
& \leq \frac 12 \EE R(0)^2 + C \EE \int_0^{t \wedge \tau_\kappa} ( 1 + M^2 + \bar M^2 +   M^2 \bar M^2) R^2 ds \notag\\
&\qquad + \frac 14 \EE \sup_{[0,t \wedge \tau_\kappa]} R^2 +  C \EE  \int_0^{t\wedge \tau_\kappa} R^2 ds.  
\label{eq:diff:H1:3}
\end{align}
We now apply the stochastic Gr\"onwall Lemma (see~\cite{GZ1,DGHTZ12}) which combined with the definition of $\tau_\kappa$ in \eqref{eq:tau:kappa}--\eqref{eq:tau:kappa:2} implies
\begin{align*} 
\EE \sup_{[0, t\wedge \tau_\kappa]} \| v^{(1)}-v^{(2)}\|_{H^1}^2 \leq \EE \sup_{[0,t\wedge \tau_\kappa]} R^2  \leq C e^{C (\kappa+t)}  \EE \|v_0^{(1)}-v_0^{(2)}\|_{H^1}^2 
\end{align*}
concluding the proof of the lemma.
\end{proof}

\section{Existence of an invariant measure} \label{sec:existence}
The classical technique for proving the existence of invariant measures is the Kryloff-Bogoliubov procedure. The following well-known result (see e.g.~\cite{ZabczykDaPrato1996,Debussche2011a,KuksinShirikian12}) applies to any Feller Markov semigroup.

\begin{lemma}[\bf K-B procedure] 
\label{lem:sufficient}
Assume $P_{t}$ is Feller on $V$, and there exists $v_{0} \in L^{2}(\Omega,V)$ such that the family of Borel probability measures on $V$
\begin{align*}
\mu_T(\cdot) = \frac{1}{T} \int_0^T P_{t}(v_{0} , \cdot ) dt 
\end{align*}
is tight. Then any sub-sequential weak limit $\mu$ of the family $\{ \mu_T\}_{T>0}$ is an invariant measure for $P_t$.
\end{lemma}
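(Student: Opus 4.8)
The plan is to run the classical Krylov--Bogoliubov argument: use the Feller property from Theorem~\ref{thm:Feller} to pass weak limits through the semigroup, and a time-averaging (boundary-term) estimate to kill the resulting error.

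First I would extract a convergent subsequence. Since $\{\mu_T\}_{T>0}$ is tight on $V$ by hypothesis, Prokhorov's theorem gives relative compactness in the topology of weak convergence of Borel probability measures on $V$, so there is a sequence $T_n \uparrow \infty$ and $\mu \in \Pr(V)$ with $\mu_{T_n} \rightharpoonup \mu$, and every subsequential limit arises this way. Since $C_b(V)$ is measure-determining, to prove that $\mu$ is invariant in the sense of \eqref{eq:invariant:def} it suffices to check
\[
\int_V P_s \phi \, d\mu = \int_V \phi \, d\mu \qquad \text{for all } s \ge 0 \text{ and all } \phi \in C_b(V).
\]

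Next I would fix $s\ge 0$ and $\phi\in C_b(V)$ and rewrite $\int_V P_s\phi\, d\mu_T$. The Feller property ensures $P_s\phi \in C_b(V)$, so both $\phi$ and $P_s\phi$ are admissible test functions for $\mu_{T_n}\rightharpoonup\mu$. Using the definition of $\mu_T$, the Markov (Chapman--Kolmogorov) identity $P_{t+s}=P_tP_s$, and Fubini's theorem --- justified since $|P_t\phi(v_0)|\le\|\phi\|_\infty$ and, by the path regularity \eqref{eq:ExReg}, the map $t\mapsto P_t\phi(v_0)=\EE\,\phi(v(t,v_0))$ is bounded and continuous hence measurable --- I obtain
\[
\int_V P_s\phi \, d\mu_T = \frac{1}{T}\int_0^T \Bigl( \int_V P_s\phi \, dP_t(v_0,\cdot) \Bigr) dt = \frac{1}{T}\int_0^T P_{t+s}\phi(v_0)\, dt = \frac{1}{T}\int_s^{T+s} P_t\phi(v_0)\, dt .
\]
Subtracting $\int_V\phi\,d\mu_T=\frac1T\int_0^T P_t\phi(v_0)\,dt$, the common integral over $[s,T]$ cancels and only two boundary integrals of length $s$ survive, so that
\[
\Bigl| \int_V P_s\phi \, d\mu_T - \int_V \phi \, d\mu_T \Bigr| = \frac{1}{T}\Bigl| \int_T^{T+s} P_t\phi(v_0)\, dt - \int_0^s P_t\phi(v_0)\, dt \Bigr| \le \frac{2 s \|\phi\|_\infty}{T}.
\]
Letting $T=T_n\to\infty$ and using $\mu_{T_n}\rightharpoonup\mu$ applied to both $\phi$ and $P_s\phi$ then yields $\int_V P_s\phi\,d\mu=\int_V\phi\,d\mu$, which by the reduction above proves that $\mu$ is invariant.

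I do not expect a serious obstacle in this lemma itself: once the Feller property of Theorem~\ref{thm:Feller} is available, the proof is exactly the standard time-averaging computation above, with only routine points left to verify --- that $\mu_T$ is a bona fide Borel probability measure (countable additivity follows from monotone convergence applied to $t\mapsto P_t(v_0,B)$) and the measurability of $t\mapsto P_t\phi(v_0)$ (from \eqref{eq:ExReg} and dominated convergence). The substantive work lies in \emph{verifying the hypotheses}: the Feller property (Theorem~\ref{thm:Feller}) and the tightness of $\{\mu_T\}$. It is the latter where the logarithmic moment bounds of Section~\ref{sec:existence} (Theorem~\ref{thm:moments}) enter, since the compact embedding $H^2\hookrightarrow V$ converts a uniform-in-$T$ control of $\frac1T\int_0^T\EE\,\log(1+\|v(t,v_0)\|_{H^2}^2)\,dt$ into tightness of $\{\mu_T\}$ on $V$ via Chebyshev's inequality.
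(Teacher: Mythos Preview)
Your argument is correct and is precisely the standard Krylov--Bogoliubov computation. The paper does not actually supply a proof of this lemma; it states it as well-known and refers to \cite{ZabczykDaPrato1996,Debussche2011a,KuksinShirikian12}. Your write-up is the proof one finds in those references, so there is nothing to compare beyond noting that your proposal fills in what the paper leaves to citation.
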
 
In view of the results proven in Section~\ref{sec:Feller}, the Markov semigroup $P_t$ of the stochastic Primitive Equations defined in \eqref{eq:Pt:def} is Feller. Hence, Lemma~\ref{lem:sufficient} yields the proof of Theorem~\ref{thm:existence} once the tightness of the family $\{\mu_T\}_{T>0}$ is established. To prove tightness, we obtain in the next teorem a  moment bound for strong solutions $v$ of \eqref{eq:P:1}--\eqref{eq:BC:3}, in a space that compactly embeds in $V$. 
\begin{theorem}[\bf Strong moments]
\label{thm:moments}
Let $v_0 \in L^2(\Omega;\DD(A))$ and let $v(t,v_0)$ be the strong solution of the stochastic 3D Primitive Equations \eqref{eq:P:1}--\eqref{eq:BC:3}.
Then, for any {deterministic time} $T>0$ we have
\begin{align}
& \EE \int_0^T \log(1 + \| \nabla ( |v(s)|^7) \|_{L^2}^2 + \| \nabla (| \dz v(s)|^3) \|_{L^2}^2 + \| \Delta v(s)\|_{L^2}^2 ) ds \notag\\
&\qquad \leq C\EE \log(1+ \|v_0\|_{L^{14}}^{14} + \| \dz v_0 \|_{L^6}^6 + \| \nabla v_0 \|_{L^2}^2) + C \EE \|v_0\|_{L^2}^2 + C T \label{eq:MAIN}
\end{align}
for some $T$-independent constant $C>0$. 
\end{theorem}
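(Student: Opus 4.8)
The plan is to carry out the Kukavica--Vicol logarithmic-moment scheme directly at the level of the three quantities entering \eqref{eq:MAIN}. Set
\begin{align*}
\mathcal{Z} := \|v\|_{L^{14}}^{14} + \|\dz v\|_{L^6}^6 + \|\nabla v\|_{L^2}^2 , \qquad
\bar{\mathcal{Z}} := \|\nabla(|v|^{7})\|_{L^2}^2 + \|\nabla(|\dz v|^{3})\|_{L^2}^2 + \|\Delta v\|_{L^2}^2 ,
\end{align*}
which are a.s.\ finite for $v_0 \in \DD(A)$ since $H^2 \hookrightarrow L^\infty \cap W^{1,6}$ in three dimensions, and parabolic regularization keeps $v(t)$ in $\DD(A)$ for a.e.\ $t$. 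First I would derive the It\=o identities for the three constituents of $\mathcal{Z}$: apply the $L^p$ It\=o lemma with $p = 14$ to \eqref{eq:P:1} (using the cancellation $\langle (v\cdot\grad + w\dz) v, v|v|^{12}\rangle = 0$), the $L^p$ It\=o lemma with $p=6$ to the vorticity formulation \eqref{eq:P:Vort} (using $\langle (v\cdot\grad + w\dz)\dz v, \dz v |\dz v|^{4}\rangle = 0$), and the $L^2$ It\=o lemma to $\nabla v$ exactly as in \eqref{eq:H1:Ito}. The exponents $14$ and $6$ are forced by the Sobolev/Gagliardo--Nirenberg exponents needed to close the estimates and are matched to the standing hypotheses \eqref{eq:sigma:cond:Bnd:L14}--\eqref{eq:sigma:cond:Bnd:W1z6}.

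The second step is to bound the drifts so that, after adding the three identities, one obtains
\begin{align*}
d\mathcal{Z} + \bar{\mathcal{Z}}\, dt \le C(1+\mathcal{Z})(1 + \|\nabla v\|_{L^2}^2)\, dt + dM ,
\end{align*}
with $M$ the combined martingale term. For the $L^{14}$ identity the pressure term $\langle \grad p, v|v|^{12}\rangle$ is handled precisely as in \eqref{eq:L6:1}: represent $\grad p$ by the two-dimensional Riesz transform \eqref{eq:p}, pass to the vertical mean, and use the two-dimensional Gagliardo--Nirenberg inequality together with $\||v|^{7}\|_{L^6} \le C\|\nabla(|v|^{7})\|_{L^2}$, producing a bound $\tfrac14\|\nabla(|v|^{7})\|_{L^2}^2 + C(1+\|v\|_{L^{14}}^{14})(1+\|\nabla v\|_{L^2}^2)$; the It\=o correction is controlled by \eqref{eq:sigma:cond:Bnd:L14}. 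The vorticity $L^6$ identity is analogous, its nonlinearities $\langle -(\dz v)\cdot\grad v + (\grad\cdot v)\dz v,\ \dz v|\dz v|^{4}\rangle$ being integrated by parts and absorbed into $\eps\|\nabla(|\dz v|^{3})\|_{L^2}^2 + C(1+\mathcal{Z})(1+\|\nabla v\|_{L^2}^2)$, with the correction controlled by \eqref{eq:sigma:cond:Bnd:W1z6}. Since the parabolic dissipation coefficients here are $\tfrac{4\cdot 13}{14} = \tfrac{26}{7} > 2$ and $\tfrac{4\cdot 5}{6} = \tfrac{10}{3} > 2$ (and $2$ for the $H^1$ identity), there is ample room to absorb these $\eps$-corrections and still dominate the $+\bar{\mathcal{Z}}$ contribution. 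The genuinely new point is the $H^1$ identity: since we can no longer invoke the stopping time $\sigma_\gamma$ used in Lemma~\ref{lem:L:moments:local}, the coupling term $\langle w\dz v, \Delta v\rangle$ must be closed using the $L^6$ bound on $\dz v$ now at our disposal. Writing $\|w\|_{L^3} \le C\|\nabla v\|_{L^2}^{1/2}\|\Delta v\|_{L^2}^{1/2}$ via \eqref{eq:w} and $\|\dz v\|_{L^6}^{4} \le (1+\mathcal{Z})^{2/3}$, an $\eps$-Young inequality gives $|\langle w\dz v,\Delta v\rangle| \le \eps\|\Delta v\|_{L^2}^2 + C\|\nabla v\|_{L^2}^2(1+\mathcal{Z})$, while $\langle v\cdot\grad v,\Delta v\rangle$ is handled as in \eqref{eq:AH1:1} using $\|v\|_{L^6}^{4} \le C(1+\|v\|_{L^{14}}^{14})$.

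Next I would apply the It\=o lemma to $\log(1+\mathcal{Z})$, as in \eqref{eq:log:Y:Ito}; the It\=o correction is nonnegative, so
\begin{align*}
d\log(1+\mathcal{Z}) + \frac{\bar{\mathcal{Z}}}{1+\mathcal{Z}}\, dt \le C(1+\|\nabla v\|_{L^2}^2)\, dt + \frac{1}{1+\mathcal{Z}}\, dM .
\end{align*}
Integrating on $[0,t]$, taking a supremum over $t\in[0,T]$, applying expectations, estimating the martingale term with the Burkholder--Davis--Gundy inequality (its quadratic variation is $\le C\int_0^T(1+\|\nabla v\|_{L^2}^2)\,dt$ after using \eqref{eq:sigma:cond:Lip:quant}, \eqref{eq:sigma:cond:Bnd:L14}, \eqref{eq:sigma:cond:Bnd:W1z6}), and invoking the energy moment bound \eqref{eq:E:moment} to control $\EE\int_0^T(1+\|\nabla v\|_{L^2}^2)\,dt$, yields
\begin{align*}
\EE \sup_{t\in[0,T]}\log(1+\mathcal{Z}(t)) + \EE\int_0^T \frac{\bar{\mathcal{Z}}(t)}{1+\mathcal{Z}(t)}\, dt \le C\, \EE\log(1+\mathcal{Z}(0)) + C\, \EE\|v_0\|_{L^2}^2 + CT .
\end{align*}
To pass from this to \eqref{eq:MAIN} I would use the elementary inequality $1+\bar{\mathcal{Z}} \le (1+\mathcal{Z})\bigl(1 + \tfrac{\bar{\mathcal{Z}}}{1+\mathcal{Z}}\bigr)$, which gives pointwise $\log(1+\bar{\mathcal{Z}}) \le \log(1+\mathcal{Z}) + \tfrac{\bar{\mathcal{Z}}}{1+\mathcal{Z}}$; integrating in time, taking expectations, and bounding $\EE\int_0^T\log(1+\mathcal{Z}(t))\,dt$ by the logarithmic moment estimate just obtained (applied on each subinterval $[0,t]$, together with \eqref{eq:E:moment}), one arrives at \eqref{eq:MAIN}.

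The main obstacle is the second step, and within it the $H^1$ estimate for $\|\nabla v\|_{L^2}^2$ \emph{without} recourse to the stopping time $\sigma_\gamma$ that was indispensable in Lemma~\ref{lem:L:moments:local}. This is exactly where the gain in integrability pays off: the quantity $\|\dz v\|_{L^2}^2\|\nabla\dz v\|_{L^2}^2$, which in the weak-moment analysis could only be tamed by a stopping time, is here replaced by $\eps\|\Delta v\|_{L^2}^2 + C\|\nabla v\|_{L^2}^2(1+\mathcal{Z})$, whose ``bad'' factor $1+\mathcal{Z}$ is precisely cancelled upon dividing by $1+\mathcal{Z}$ in the logarithmic It\=o step, leaving only the time-integrable weight $1+\|\nabla v\|_{L^2}^2$. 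A secondary but lengthy point is the bookkeeping of the Gagliardo--Nirenberg and $\eps$-Young exponents so that every nonlinear, pressure, and It\=o-correction term lands in the admissible form $C(1+\mathcal{Z})(1+\|\nabla v\|_{L^2}^2)$.
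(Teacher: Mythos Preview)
Your scheme is essentially the paper's, and the drift estimates and the logarithmic It\=o step go through as you describe; the difficulty is your very last paragraph. After writing
\[
\log(1+\bar{\mathcal Z}) \le \log(1+\mathcal Z) + \frac{\bar{\mathcal Z}}{1+\mathcal Z},
\]
you propose to control $\EE\int_0^T \log(1+\mathcal Z(t))\,dt$ by ``applying the logarithmic moment estimate on each subinterval $[0,t]$''. But that estimate only gives $\EE\log(1+\mathcal Z(t)) \le C\EE\log(1+\mathcal Z(0)) + C\EE\|v_0\|_{L^2}^2 + Ct$, and integrating this in $t$ produces a term $CT^2$ and factors of $T$ in front of $\EE\log(1+\mathcal Z(0))$ and $\EE\|v_0\|_{L^2}^2$. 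The theorem, however, requires a $T$-independent constant and linear growth in $T$; without that, the Kryloff--Bogoliubov tightness argument in the proof of Theorem~\ref{thm:existence} (where one divides by $T$ and sends $T\to\infty$) collapses. The paper flags exactly this obstruction.

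The missing idea is an \emph{absorption} step: one shows by interpolation that $\mathcal Z$ is controlled by a sublinear power of $\bar{\mathcal Z}$ together with $\bar E^2 = \|\nabla v\|_{L^2}^2$. Concretely, the paper proves
\[
1+\mathcal Z \le C\,(1+\bar E^2)^{7}\,(1+\bar{\mathcal Z})^{3/4}
\]
via the Gagliardo--Nirenberg interpolations $J^{14} \le C\bar E^{14/3}\bar J^{28/3} + C\bar E^{14}$ and $K^6 \le C\bar E^{3/2}\bar K^{9/2} + C\bar E^6$, together with $L^2 = \bar E^2$. Taking logarithms gives $\log(1+\mathcal Z) \le C(1+\bar E^2) + \tfrac34\log(1+\bar{\mathcal Z})$, and now the $\tfrac34\int_0^T\log(1+\bar{\mathcal Z})$ can be absorbed into the left-hand side, while $\EE\int_0^T(1+\bar E^2)\,dt$ is handled by the energy moment \eqref{eq:E:moment} with the required linear-in-$T$ growth. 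This closes \eqref{eq:MAIN} with a genuinely $T$-independent constant.
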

The above theorem, combined with the comments after Lemma~\ref{lem:sufficient} above, completes the proof of the existence of an invariant measure.
\begin{proof}[Proof of Theorem~\ref{thm:existence}]
To see why Theorem~\ref{thm:moments} implies the tightness in $\Pr(V)$ of the family of time-average measures $\{ \mu_T\}_{T>0}$, let $R > 0$ and denote by $B_R$ the ball of radius $R$ in $\DD(A)$, which is compact in $V$. Moreover, let the initial data $v_0 = 0$. Then, for any $\delta>0$, by Chebyshev's inequality and \eqref{eq:MAIN} we have that
\begin{align*}
 \mu_T(B_R^c) 
 &= \frac 1T \int_0^T \Prb( \| v(t,0)\|_{H^2} \geq R) dt  \notag\\ 
 &\leq   \frac{1}{T \log(1+R^2) } \int_0^T \EE \log(1 + \| v(t,0)\|_{H^2}^2) dt  \leq \frac{C}{\log(1+R^2)}  \leq \delta
\end{align*}
provided $R$ is chosen large enough, {independently of $T$}. Hence, we may apply Lemma~\ref{lem:sufficient} to obtain that any sub-sequential limit of the family $\{ \mu_T \}_{T>0}$ is an invariant measure. The existence of sub-sequential limits $\mu$ is guaranteed by Prokhorov's theorem, which establishes the weak compactness of the family $\{\mu_T\}_{T>0}$.  See e.g.~\cite{ZabczykDaPrato1996} for details. 
 
By the above argument, the set of invariant measures, henceforth denoted by ${\mathcal I}$, is not empty. One may now use a general argument (cf.~\cite{ZabczykDaPrato1996}) to show the existence of an  ph{ergodic} invariant measure. Recall that an invariant measure is ergodic if and only if it is an extremal point of ${\mathcal I}$. Directly from linearity of $P_t^\ast$, the set ${\mathcal I}$ is convex, and due to the Feller property of Theorem~\ref{thm:Feller}, ${\mathcal I}$ is  also closed. By the estimate in Theorem~\ref{thm:moments} and Lemma~\ref{lem:energy:moment}, applied to any stationary solution, we obtain that the set of invariant measures is tight, and hence ${\mathcal I}$ is compact. By Krein-Millman, it now follows that ${\mathcal I}$ has an extremal point which is ergodic.
\end{proof}

The remainder of this section is devoted to proving Theorem~\ref{thm:moments}.  For simplicity of the presentation we introduce the quantities
\begin{align}
 & E = \| v\|_{L^2},  &&\bar E = \|\nabla v \|_{L^2}, \label{eq:E:def}\\
 & J = \|v\|_{L^{14}},  &&\bar J = \| \nabla (|v|^7) \|_{L^2}^{1/7} , \label{eq:J:def}\\
 & K = \| \dz v \|_{L^6},  &&\bar K = \| \nabla (|\dz v|^3)\|_{L^2}^{1/3},\label{eq:K:def} \\
 & L = \|\nabla v\|_{L^2} = \bar E,  &&\bar L = \| \Delta v\|_{L^2}.\label{eq:L:def}
\end{align}
Recall that in Lemma~\ref{lem:energy:moment} we have obtained the moment bound
\begin{align} 
\EE E(t)^2 + \EE \int_0^T \bar E(s) ^2 ds \leq \EE E(0)^2 + C T \label{eq:E:moment:2}.
\end{align}
In the following subsection we obtain bounds for $J$, $K$, and $L$ individually.  The proof of Theorem~\ref{thm:moments} is then given in Subsection~\ref{sec:existence:moments}.

\subsection{Bounds for \texorpdfstring{$J$}{J}} \label{sec:existence:moments:J}
The It\=o lemma in $L^p$, with $p=14$, applied to \eqref{eq:P:1}, combined with the cancellation property $\langle (v \cdot \grad + w \dz) v, v |v|^{12}\rangle = 0$, gives
\begin{align}
d J^{14}
&= \left( - \frac{14 \cdot 13}{49} \bar J^{14} + 14 \langle \grad p, v |v|^{12} \rangle + \frac{14 \cdot 13}{2} \sum_k \langle \sigma_k(v)^2, |v|^{12} \rangle \right) dt + 14 \sum_k \langle \sigma_k(v), v |v|^{12}\rangle dW^k_t\notag\\ 
& =: A_J dt + dM_{t,J}. \label{eq:J:Ito}
\end{align}
We next give a (pathwise) estimate for $A_J$. To bound the pressure term, use \eqref{eq:P:3} and \eqref{eq:p} to obtain
\begin{align*} 
\langle \grad p, v |v|^{12} \rangle 
&= \int \grad p \cdot v |v|^{12} dx dz  = \int \grad p \cdot M(v |v|^{12}) dx \notag\\
& \leq \|\grad p \|_{L^{7/4}_x}  \|M(v|v|^{12})\|_{L^{7/3}_x} \leq C \| M ( v\cdot \grad v + v \grad \cdot v)  \|_{L^{7/4}_x}  \|M(v|v|^{12})\|_{L^{7/3}_x}
\end{align*}
and thus
\begin{align}
\langle \grad p, v |v|^{12} \rangle 
&\leq C \left( \| v\|_{L^{14}_{x,z}} \|\nabla v\|_{L^2_{x,z}}  \right) 
	  \left( \| \grad M (v |v|^{12}) \|_{L^{14/13}_x} + \|M (v |v|^{12}) \|_{L^{14/13}_x} \right) \notag\\
&\leq C \| v\|_{L^{14} } \|\nabla v\|_{L^2} 
	  \left(\| |v|^6 \|_{L^{7/3}_{x,z}} \| \nabla (|v|^7)\|_{L^2_{x,z}} + \|v\|_{L^{14}_{x,z}}^{13} \right)  \notag\\
&\leq C J \bar E \left( J^6 \bar J^7 + J^{13}  \right)\notag\\
&\leq \frac{1}{7} \bar J^{14} + C J^{14} (\bar E + \bar E^2), \label{eq:AJ:1}
\end{align}
where we use $\grad (v |v|^{12}) = \frac{13}{7} v |v|^5 \grad ( |v|^7)$. 
The term in $A_J$ coming from the It\=o correction is estimated using \eqref{eq:sigma:cond:Bnd:L14} as
\begin{align} 
91 \sum_k \langle \sigma_k (v)^2, |v|^{12} \rangle \leq C \|v\|_{L^{14}}^{12} \| \sigma (v)\|_{L_2(L^{14})}^2 \leq C J^{12}(1+J^2)
\label{eq:sigma:est:L14}
\end{align}
which combined with \eqref{eq:AJ:1} yields
\begin{align} 
A_J \leq - \bar J^{14} + C J^{14} (1 + \bar E^2) + C \label{eq:AJ}
\end{align}
for some positive constant $C$.

\subsection{Bounds for \texorpdfstring{$K$}{K}} \label{sec:existence:moments:K} 
The It\=o lemma in $L^p$, with $p=6$, applied to the vorticity formulation \eqref{eq:P:Vort}, combined with the cancellation property $\langle (v \cdot \grad + w \dz) \dz v, \dz v |\dz v|^{4}\rangle = 0$, gives
\begin{align} 
d K^6 
&= \left( - \frac{6 \cdot 5}{9} \bar K^6 + 6 \langle - \dz v \cdot \grad v + (\grad \cdot v) \dz v, \dz v |\dz v|^4 \rangle   + \frac{6 \cdot 5}{2} \sum_k \langle (\dz \sigma_k(v))^2, |\dz v|^4\rangle \right) dt \notag\\
& \qquad \qquad + 6 \sum_k \langle \dz \sigma_k(v), \dz v |\dz v|^4 \rangle dW^k_t\notag\\
&=: A_K dt + d M_{t,K} \label{eq:K:Ito}.
\end{align}
To obtain a (pathwise) bound for $A_K$, note that upon integrating by parts in $x$ we have
\begin{align} 
\langle - \dz v \cdot \grad v  &+  (\grad \cdot v) \dz v , \dz v |\dz v|^4 \rangle 
= - \int (\dz v \cdot \grad v) \cdot \dz v |\dz v|^4 +  \int (\grad \cdot v) |\dz v|^6 \notag\\
&\leq C \int |v| | \nabla (|\dz v|^3)|  |\dz v|^3 
\leq C \| v\|_{L^{14}} \| \nabla (|\dz v|^3) \|_{L^2} \| |\dz v|^3 \|_{L^{7/3}} \notag\\
& \leq C J \bar K^3 \| \dz v \|_{L^7}^3
\leq C J \bar K^3 ( J^{1/4} \bar K^{3/4} )^3 \notag\\
&\leq \frac 12 \bar K^6 + C J^{14}, \label{eq:AK:1}
\end{align}
where in the second to last inequality above we  appealed to the estimate \eqref{eq:vz:interpolation} in Lemma~\ref{lem:vz:interpolation} below with $p=7$. After using condition \eqref{eq:sigma:cond:Bnd:W1z6} to bound
\begin{align} 
15 \sum_k \langle (\dz \sigma_k(v))^2, |\dz v|^4\rangle \leq C \| \dz v\|_{L^6}^4 \| \dz \sigma(v) \|_{L_2(L^6)}^2 \leq C K^4 ( 1+ K^2),
\label{eq:sigma:est:W1z6}
\end{align}
we obtain from \eqref{eq:AK:1} that
\begin{align}
A_K \leq - \bar K^6 + C ( J^{14} + K^6 + 1) \label{eq:AK}
\end{align}
for a suitable positive constant $C$.

\begin{lemma}[\bf Vorticity interpolation]
\label{lem:vz:interpolation}
Assume $v \in \DD(A)$. We have
\begin{align} 
\| \dz v \|_{L^{p}} \leq C \| v\|_{L^{2p/(8-p)}}^{1/4} \| \dz ( |\dz v|^3 ) \|_{L^2}^{1/4} 
\label{eq:vz:interpolation}
\end{align}
for all $p \in [4,8]$, and some constant $C$ that is bounded for $p\in[4,8]$.
\end{lemma}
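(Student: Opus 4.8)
The plan is to prove the interpolation inequality $\| \dz v \|_{L^{p}} \leq C \| v\|_{L^{2p/(8-p)}}^{1/4} \| \dz ( |\dz v|^3 ) \|_{L^2}^{1/4}$ for $p \in [4,8]$ by reducing it to a standard Gagliardo--Nirenberg estimate applied to the function $g := |\dz v|^3$. First I would observe that $\|\dz v\|_{L^p}^3 = \| g\|_{L^{p/3}}$, so that the claimed bound is equivalent to
\begin{align*}
\| g \|_{L^{p/3}} \leq C \| v \|_{L^{2p/(8-p)}}^{3/4} \, \| \dz g \|_{L^2}^{3/4}.
\end{align*}
Since $g$ depends on $v$ only through $\dz v$, and $\dz g = 3 |\dz v| \dz v \cdot \dz(\dz v) \cdot (\text{sign factors})$, one has pointwise $|\dz g| \leq 3 |\dz v|^2 |\partial_{zz} v|$, which is controlled by the $\DD(A)$-norm of $v$; in particular $g \in W^{1,1}_z$ in the $z$-variable for a.e.\ $x$, so the one-dimensional trace/interpolation machinery applies.

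The key step is a one-dimensional interpolation in the $z$ variable combined with a H\"older bound in $x$. For fixed $x \in \TT^2$, the $1$D inequality on $\TT^1 = (-1,1)$ gives
\begin{align*}
\| g(x,\cdot) \|_{L^q_z}^q \leq C \Bigl( \| g(x,\cdot)\|_{L^1_z} + \| \dz g(x,\cdot)\|_{L^1_z} \Bigr) \| g(x,\cdot) \|_{L^{q-1}_z \text{ or similar}}^{\,\cdot},
\end{align*}
but rather than chase the exact one-dimensional exponents I would instead invoke the anisotropic Gagliardo--Nirenberg inequality directly on $\TT^3$: for a function $g$ on $\TT^3$ one has $\| g\|_{L^{p/3}} \le C \|\dz g\|_{L^2}^{\theta} \| g\|_{L^{r}}^{1-\theta}$ for the appropriate $\theta \in (0,1)$ and $r$ determined by scaling, where the vertical derivative alone provides the gain of integrability along the $z$-direction and the horizontal directions are handled by the exponent $r$. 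Matching the powers: we want the prefactor exponent on $\|\dz g\|_{L^2}$ to be $3/4$, i.e.\ $\theta = 1/4$; then $r$ is fixed by $\frac{3}{p} = \frac{1}{4}\cdot\frac12 + \frac34 \cdot \frac1r$ and by the requirement $\| g\|_{L^r} = \| |\dz v|^3\|_{L^r} = \| \dz v\|_{L^{3r}}^3$, which should close back onto a norm of $v$ after one more interpolation/embedding step, yielding exactly the exponent $2p/(8-p)$.

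Concretely, after the anisotropic Gagliardo--Nirenberg step the right-hand side will contain $\| \dz v\|_{L^{3r}}$ rather than a norm of $v$, so the remaining task is to trade one more vertical derivative: use again the structure $\dz v \in \DD(A) \hookrightarrow$ Sobolev spaces, or iterate the same $g$-trick at a lower exponent, to absorb the residual $\dz v$ norm into $\|\dz(|\dz v|^3)\|_{L^2}^{1/4}$ and $\|v\|_{L^{2p/(8-p)}}^{1/4}$. Throughout, the endpoint cases $p=4$ and $p=8$ need to be checked to see that the constant $C$ stays bounded (at $p=8$ the exponent $2p/(8-p) = \infty$, which formally means one uses $\|v\|_{L^\infty}$; since $v \in \DD(A) \subset H^2(\TT^3) \hookrightarrow L^\infty$ this is fine, but I would state the lemma with the understanding that at $p=8$ the bound reads $\|\dz v\|_{L^8} \le C\|v\|_{L^\infty}^{1/4}\|\dz(|\dz v|^3)\|_{L^2}^{1/4}$, and continuity of the Gagliardo--Nirenberg constants on the compact interval $[4,8]$ gives the uniform $C$).

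The main obstacle I anticipate is bookkeeping the exponents in the anisotropic Gagliardo--Nirenberg inequality correctly so that the homogeneity (scaling in $v$) works out to precisely the $1/4$ powers claimed, and ensuring that the intermediate norm that appears is genuinely $\|v\|_{L^{2p/(8-p)}}$ and not some other Lebesgue norm that would require an additional (possibly non-scale-invariant) Sobolev embedding. A clean way to avoid the anisotropic subtleties altogether is the fully elementary route: write $|\dz v(x,z)|^{3} = |\dz v(x,z_0)|^3 + \int_{z_0}^{z} \dz(|\dz v|^3)(x,s)\,ds$ for a suitable $z_0$ chosen (by the mean value theorem) so that $|\dz v(x,z_0)|^3 \le \frac12 \int_{-1}^{1}|\dz v(x,s)|^3 ds$, obtaining the pointwise bound $\|\dz v(x,\cdot)\|_{L^\infty_z}^3 \le C\bigl(\|\dz v(x,\cdot)\|_{L^3_z}^3 + \|\dz(|\dz v|^3)(x,\cdot)\|_{L^1_z}\bigr)$; then interpolate in $z$ between $L^3$ and $L^\infty$, integrate in $x$ using H\"older, and bound the $\dz(|\dz v|^3)$ term by Cauchy--Schwarz in $z$ to produce the $L^2_z$ norm, finally optimizing the split of the $x$-integrability to hit the exponent $2p/(8-p)$. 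I would present whichever of these two routes yields the shortest verification, most likely the elementary one since it makes the role of each exponent transparent and the uniform boundedness of $C$ on $[4,8]$ manifest.
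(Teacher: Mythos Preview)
Your plan has a genuine gap at the step you yourself flag as ``the remaining task'': neither of your routes produces a norm of $v$ on the right-hand side, only norms of $\dz v$. Your anisotropic Gagliardo--Nirenberg step on $g = |\dz v|^3$ yields $\|g\|_{L^{p/3}} \leq C \|\dz g\|_{L^2}^{3/4} \|g\|_{L^r}^{1/4}$, and since $\|g\|_{L^r} = \|\dz v\|_{L^{3r}}^3$, the residual factor is $\|\dz v\|_{L^{3r}}^{3/4}$, not $\|v\|_{L^{2p/(8-p)}}^{3/4}$. Your suggestions for closing this---``$\dz v \in \DD(A) \hookrightarrow$ Sobolev spaces'' (which goes the wrong direction) or ``iterate the same $g$-trick at a lower exponent'' (which still never introduces $v$)---do not work. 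The elementary one-dimensional FTC argument has the same defect: after the pointwise bound you still carry $\|\dz v(x,\cdot)\|_{L^3_z}$, and no amount of interpolation between $L^3_z$ and $L^\infty_z$ or H\"older in $x$ converts that into a norm of $v$.

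The missing idea is a single integration by parts in $z$, which is the entire proof. For $p \in [4,8)$ one writes
\begin{align*}
\int |\dz v|^p \, dx\, dz
&= \int \dz v \cdot \dz v\, |\dz v|^{p-2} \, dx\, dz
= -(p-1) \int v\, |\dz v|^{p-2} \partial_{zz} v \, dx\, dz \\
&= -\frac{p-1}{3} \int v\, |\dz v|^{p-4} \sgn(\dz v)\, \dz(|\dz v|^3) \, dx\, dz,
\end{align*}
and then H\"older's inequality with the three exponents $\frac{2p}{8-p}$, $\frac{p}{p-4}$, $2$ (which sum to $1$ in reciprocals) gives
\begin{align*}
\|\dz v\|_{L^p}^p \leq \frac{p-1}{3}\, \|v\|_{L^{2p/(8-p)}} \, \|\dz v\|_{L^p}^{p-4} \, \|\dz(|\dz v|^3)\|_{L^2}.
\end{align*}
Dividing through by $\|\dz v\|_{L^p}^{p-4}$ yields the inequality with the explicit constant $C = ((p-1)/3)^{1/4}$, which is manifestly bounded on $[4,8]$; the endpoint $p=8$ follows by passing to the limit. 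The integration by parts is precisely what trades the vertical derivative for $v$ itself, and it does so in one stroke---your interpolation machinery is not needed.
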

\begin{proof}[Proof of Lemma~\ref{lem:vz:interpolation}]
 Let $p\in [4,8)$. Using $\dz |\dz v| = \sgn(\dz v) \partial_{zz} v$, we have 
\begin{align*}
 \int |\dz v|^p dx dz &= \int \dz v \dz v |\dz v|^{p-2} dx dz
= - (p-1) \int v |\dz v|^{p-2} \partial_{zz} v dx dz \\
 &= - \frac{p-1}{3} \int v |\dz v|^{p-4} \sgn(\dz v) \partial_z \left( |\dz v|^3 \right) dx dz\\
 &\leq \frac{p-1}{3} \| v \|_{L^{(2p)/(8-p)}} \| \dz v\|_{L^{p}}^{p-4} \| \dz ( |\dz v|^3 ) \|_{L^2}
\end{align*}
and therefore \eqref{eq:vz:interpolation} holds with $C = ((p-1)/3)^{1/4}$. In order to include the limiting case $p=8$, pass $p\to 8$ in the above estimate.
\end{proof}

\subsection{Bounds for \texorpdfstring{$L$}{L}} \label{sec:existence:moments:L} Apply $\nabla$ to \eqref{eq:P:1}, and use the $L^2$ It\=o lemma on the resulting equation  to arrive at
\begin{align}
d L^2 
&= \left( - 2 \bar L^2 + 2 \langle  v \cdot \grad v, \Delta v \rangle + 2 \langle  w \dz v, \Delta v\rangle  + \sum_k   \|\nabla \sigma_k(v)\|_{L^2}^2 \right) dt + 2 \sum_k \langle \nabla \sigma_k(v), \nabla v\rangle dW^k_t \notag\\
&=: A_L dt +d M_{t,L}.
\label{eq:L:Ito}
\end{align} 
In order to obtain a bound for $A_L$, we need to estimate the nonlinear terms $I_1 =  \langle  v \cdot \grad v, \Delta v \rangle$ and $I_2 =  \langle  w \dz v, \Delta v \rangle$. For the first term we have
\begin{align}
| I_1| 
&\leq \|v\|_{L^{14}} \|\grad v\|_{L^{7/3}} \|\Delta v\|_{L^2} \leq C \|v\|_{L^{14}} \left( \| \nabla v\|_{L^2}^{11/14} \|\Delta v\|_{L^2}^{3/14} \right)  \|\Delta v\|_{L^2}  \notag\\
&\leq C J (\bar E^{11/14} \bar L^{3/14}) \bar L \leq \frac{1}{2} \bar L^2 + C \bar E^2 J^{28/11} \label{eq:AL:1}
\end{align}
where we used that for $f = \grad v$, which has zero mean on $\TT^3$, the bound
\begin{align*}
\| f\|_{L^{7/3}} \leq C \| f\|_{L^{2}}^{11/14} \|\nabla f\|_{L^2}^{3/14}
\end{align*}
holds for a suitable positive constant $C$. The second nonlinear term is bounded as
\begin{align}
|I_2| 
&\leq \| w\|_{L^3} \| \dz v \|_{L^6} \|\Delta v\|_{L^2} 
\leq C \|\grad v\|_{L^3} \|\dz v\|_{L^6} \|\Delta v\|_{L^2}  \notag\\
&\leq C \left( \|\nabla v\|_{L^2}^{1/2} \|\Delta v\|_{L^2}^{1/2} \right) \|\dz v\|_{L^6} \|\Delta v\|_{L^2} \notag\\
&\leq C \bar E^{1/2} \bar L^{1/2} K \bar L \leq \frac 12 \bar L^2 + C \bar E^2 K^4. \label{eq:AL:2}
\end{align}
Also, by our assumption \eqref{eq:sigma:cond:Lip:quant} on $\sigma(v)$ we have
\begin{align*}
\sum_k   \|\nabla \sigma_k(v)\|_{L^2}^2 = \| \nabla \sigma(v)\|_{L_2(\UU,L^2)}^2 \leq C ( 1 + \|\nabla v\|_{L^2}^2 ) \leq C (1 + \bar E^2)
\end{align*}
which combined with \eqref{eq:AL:1} and \eqref{eq:AL:2} yields
\begin{align}
A_L \leq - \bar L^2 + C \bar E^2 \left( 1+J^{28/11} + K^4 \right) + C \label{eq:AL}
\end{align}
for some positive constant $C$.

\subsection{Coupled Moment Bounds for \texorpdfstring{$J,K$, and $L$}{J,K, and L}} \label{sec:existence:moments}
From \eqref{eq:J:Ito}, \eqref{eq:K:Ito}, and \eqref{eq:L:Ito} we have that the quantity
\begin{align}
X: = J^{14} + K^6 + L^2 \label{eq:X:def}
\end{align}
obeys 
\begin{align}
dX &= \left( A_J + A_K + A_L\right) dt + \left(dM_{t,J} + dM_{t,K}+dM_{t,L}\right)\notag\\
&= - \bar X dt + \left( A_J + A_K + A_L + \bar X\right) dt \notag\\
& \quad \quad + \sum_k \left( 14 \langle \sigma_k(v), v |v|^{12}\rangle + 6 \langle \dz \sigma_k(v), \dz v |\dz v|^4\rangle + 2 \langle \nabla \sigma_k(v) , \nabla v\rangle \right) dW^k_t \label{eq:X:Ito}
\end{align}
where we denoted 
\begin{align}
\bar X = \bar J^{14} + \bar K^6 + \bar L^2. \label{eq:bar:X:def}
\end{align}
Recall from~\eqref{eq:E:def}--\eqref{eq:L:def} that $E = \|v \|_{L^2}$, $J = \| v\|_{L^{14}}$, $K = \|\dz v \|_{L^6}$, and $L = \| \nabla v \|_{L^2} = \bar E$, and that the symbols with bars on top represent dissipative counterparts.

\begin{proof}[Proof of Theorem~\ref{thm:moments}]
It turns out that if we were to integrate \eqref{eq:X:Ito} and take expected values, the bounds one can obtain are only up to a certain stopping time. In order to obtain moment bounds for any deterministic time $T$, we introduce a $C^2$ increasing function $\phi \colon [0,\infty) \to [0,\infty)$  such that $\phi(r) \to \infty$ as $r \to \infty$. We apply the It\=o formula to $\phi(X)$ and obtain  
\begin{align}
&d \phi(X) + \bar X \phi'(X) \notag\\
&\quad = \phi'(X) \left( A_J + A_K + A_L + \bar X\right) dt + \phi'(X) \left( dM_{t,J} + dM_{t,K}+dM_{t,L}\right) \notag\\
& \quad \quad + \frac 12 \phi''(X) \sum_k  \Bigl( 14 \langle \sigma_k(v), v |v|^{12}\rangle + 6 \langle \dz \sigma_k(v), \dz v |\dz v|^4\rangle + 2 \langle \nabla \sigma_k(v) , \nabla v\rangle \Bigr)^2  dt \label{eq:phi:Ito}.
\end{align}
In view of the estimates \eqref{eq:AJ}, \eqref{eq:AK}, and \eqref{eq:AL} for $A_J$, $A_K$, and $A_L$, our assumptions on $\sigma(v)$, and $\phi' \geq 0$, we deduce from \eqref{eq:phi:Ito} that
\begin{align}
 d \phi(X) + \bar X \phi'(X) 
 &\leq C \phi'(X) \left( 1 + J^{14} (1 +\bar E)^2 + K^6 + \bar E^2 (1+ J^{28/11} + K^4) \right) dt \notag \\
 & \qquad + \phi'(X) \left(d M_{t,J} + dM_{t,K}+dM_{t,L}\right)\notag\\
 & \qquad +C |\phi''(X)| \left( J^{26} (1+ J^2) + K^{10} (1+K^2) + L^2 (1+L^2) \right) dt\notag\\
 &\leq C \phi'(X) \left( 1 + J^{14}  + K^6\right) (1 + \bar E^2)dt \notag \\
 & \qquad + \phi'(X) \left( dM_{t,J} + dM_{t,K}+dM_{t,L}\right)  +C |\phi''(X)| \left(1 + X \right)^2 dt \label{eq:phi:Ito:2}
\end{align}
for a sufficiently large constant $C$, that depends on $\sigma$.

We now make a specific choice for $\phi$, namely
\begin{align}
\phi(X) = \log(1 + X) \label{eq:phi:def}
\end{align}
which is a smooth increasing function on $[0,\infty)$ with
\begin{align}
 \phi'(X) = \frac{1}{1+X}, \label{eq:phi:derivative:1}
 \end{align}
 and
\begin{align}
|\phi''(X)| \leq \frac{1}{(1+X)^2}. \label{eq:phi:derivative:2}
\end{align}
For an arbitrary deterministic time  $T>0$, we integrate \eqref{eq:phi:Ito:2} from $0$ to $T$, take expected values, use  \eqref{eq:phi:def}, \eqref{eq:phi:derivative:1}, \eqref{eq:phi:derivative:2}, and the fact that $\EE ( M_{t,J} + M_{t,K}+M_{t,L} ) = 0$, to obtain
\begin{align}
\EE \log(1 +X(T)) + \EE \int_0^T \frac{\bar X(s)}{1+ X(s)} ds \leq \EE \log(1+ X(0)) + C T + C  \EE \int_0^T  \bar E(s)^2 ds
\label{eq:log:moment:1}
\end{align}
for a suitable positive constant $C$. To close the moment estimate we now simply combine \eqref{eq:log:moment:1} with the moment estimate \eqref{eq:E:moment:2} obtained from the energy inequality and conclude
\begin{align}
\EE \log(1 +X(T)) + \EE \int_0^T \frac{\bar X(s)}{1+ X(s)} ds \leq \EE \log(1+ X(0)) + C \EE E(0)^2+ C T
\label{eq:log:moment:2}
\end{align}
where $T>0$ is an arbitrary deterministic time.

The last key step is to deduce from \eqref{eq:log:moment:2}  in fact  a bound on $\EE \int_0^T \log (1 +\bar X(s)) ds$.  To achieve this, we estimate
\begin{align}
\EE \int_0^T \log(1 + \bar X(s)) ds  
&= \EE \int_0^T  \log \left(\frac{1 + \bar X(s)}{1+ X(s)} \right) ds  + \EE \int_0^T \log(1+X(s)) ds \notag\\
&\leq  \EE \int_0^T \frac{\bar X(s)}{1+X(s)} ds + \EE \int_0^T \log(1+X(s)) ds \notag\\
&\leq \EE \log(1+ X(0)) + C \EE E(0)^2 + C T + \EE \int_0^T \log(1+X(s)) ds 
\label{eq:log:moment:3}
\end{align}
where we used $\log(r) \leq r-1$ for all $r>0$. It remains to estimate the last term on the right side of \eqref{eq:log:moment:3}. If we merely use the bound from \eqref{eq:log:moment:2} on $\EE \log (1+X)$, we obtain a quadratic growth in $T$, which is too large for our purposes (we can afford at most linear growth in $T$). To overcome this difficulty, first note that 
\begin{align} 
J^{14} &= \| v\|_{L^{14}}^{14} 
\leq C \left( \| v\|_{L^6}^{1/3} \|v\|_{L^{42}}^{2/3} \right)^{14} 
= C \|v\|_{L^6}^{14/3} \| |v|^7 \|_{L^6}^{4/3} \notag\\
&\leq C \| \nabla v\|_{L^2}^{14/3} \left( \|\nabla (|v|^7) \|_{L^2} + \| |v|^7 \|_{L^2} \right)^{4/3} \notag\\
&\leq C \bar E^{14/3} \bar J^{28/3} + C \bar E^{14/3} J^{28/3} \notag\\
&\leq C \bar E^{14/3} \bar J^{28/3} + \frac 12 J^{14} + C \bar E^{14} 
\label{eq:J:interpolation}
\end{align}
and similarly
\begin{align} 
K^6 &= \| \dz v \|_{L^6}^{6} 
\leq C \left( \| \dz v \|_{L^2}^{1/4} \| \dz v \|_{L^{18}}^{3/4}\right)^6
= C \| \dz v \|_{L^2}^{3/2} \| |\dz v|^3 \|_{L^6}^{3/2} \notag\\
&\leq C \| \dz v\|_{L^2}^{3/2} \left( \| \nabla (|\dz v|^3) \|_{L^2} + \| |\dz v|^3 \|_{L^2} \right)^{3/2} \notag\\
&\leq C \bar E^{3/2} \bar K^{9/2} + C \bar E^{3/2} K^{9/2} \notag\\
&\leq C \bar E^{3/2} \bar K^{9/2} + \frac 12 K^{6}+ C \bar E^{6}
\label{eq:K:interpolation}
\end{align}
for a positive constant $C$. Recalling that $L^2 = \bar E^2$, we thus obtain
\begin{align*} 
1 + X 
& = 1 + \bar J^{14} + \bar K^6 + \bar E^2 \notag\\
&\leq C \left( 1+ \bar E^2 \right)^{7} + C \bar E^{14/3} \bar J^{28/3} + C \bar E^{3/2} \bar K^{9/2} \notag\\
& \leq C \left( 1+ \bar E^2 \right)^{7} + C \bar E^{14/3}  \bar X^{2/3} + C \bar E^{3/2} \bar X^{3/4} 
\leq C (1+ \bar E^2)^7 (1+ \bar X)^{3/4}
\end{align*}
and hence
\begin{align} 
\log (1+ X) \leq C + 7 \log (1+ \bar E^2) + \frac 34 \log (1+\bar X) \leq C (1 + \bar E^2) + \frac 34 \log(1+ \bar X) \label{eq:X:absorb}.
\end{align}
Combining \eqref{eq:log:moment:3} with \eqref{eq:X:absorb} yields the desired estimate
\begin{align} 
\frac 14 \EE \int_0^T \log(1 + \bar X(s)) ds
&\leq \EE \log(1+ X(0)) + C \EE E(0)^2 + C T + C  \EE \int_0^T (1 + \bar E(s)^2) ds \notag\\
&\leq\EE \log(1+ X(0)) + C \EE E(0)^2 + C T \label{eq:moment:existence}
\end{align}
where in the last inequality we have appealed to \eqref{eq:E:moment:2}. This concludes the proof of the theorem.
\end{proof}

\section{Regularity of invariant measures} \label{sec:regularity}

The purpose of this section is to give the proof of Theorem~\ref{thm:regularity}. As first step, we claim that
\begin{align} 
\int_{V} \log\Bigl(1 + \| \nabla (|v|^{3}) \|_{L^{2}}^{2} + \| \nabla \dz v\|_{L^2}^2 \Bigr) d \mu (v) < \infty.
\label{eq:first:higher:moment}
\end{align}
Lemma~\ref{lem:Y:moment} shows that for any $v_0 \in L^2(\Omega,V)$ and any deterministic time $T>0$, we have
\begin{align*} 
&\EE \sup_{t \in [0,T]} \log(1 + \| v(t)\|_{L^6}^6 + \| \dz v\|_{L^2}^2 ) + \EE \int_0^T  \frac{\| \nabla(|v(t)|^3) \|_{L^2}^2 + \|\nabla \dz v(t)\|_{L^2}^2}{1 + \| v(t)\|_{L^6}^6 + \| \dz v\|_{L^2}^2} dt \notag\\
&\qquad  \leq C \EE \log(1 + \| \nabla v_0\|_{L^2}^2) + C \EE \|v_0\|_{L^2}^2 + CT.
\end{align*}
Therefore, similarly to \eqref{eq:log:moment:3} and appealing to \eqref{lem:energy:moment}, it follows that 
\begin{align} 
\EE \int_0^T \log(1 + \| \nabla(|v(t)|^3) \|_{L^2}^2 + \|\nabla \dz v(t)\|_{L^2}^2) \leq  C \EE \|v_0\|_{H^1}^2 + C(1+T).
\label{eq:higher:moment:1}
\end{align}
Now for any $n \geq 1$, $R>0$, and $v \in V$, let 
\begin{align*} 
f_{n,R} (v) = \left( \log( 1 + \|\nabla P_n   (| v|^3)\|_{L^2}^2 + \|\nabla P_n  \dz  v \|_{L^2}^2 \right) \wedge R
\end{align*}
where $P_n$ is the projection operator onto the span of the first $n$ eigenfunctions of $-\Delta$. 
Hence, by the definition of the measure $\mu$ being invariant we have  
\begin{align} 
\int_V f_{n,R}(v_0) d\mu(v_0) = \int_V \int_V \frac{1}{T} \int_0^T P_t (v_0, dv) f_{n,R}(v) dt d\mu (v_0)
\label{eq:higher:moment:2}
\end{align}
for any $T>0$. Now, for any $\rho\geq 1$ and any $v_0$ in $B_V(\rho)$, the ball or radius $\rho$ about the origin in $V$, we have by \eqref{eq:higher:moment:1}
that
\begin{align} 
& \left| \frac{1}{T} \int_0^T \int_V P_t (v_0, dv) f_{n,R}(v) dt \right| \notag\\
&\qquad = \left| \frac{1}{T} \int_0^T \EE f_{n,R}(v(t,v_0)) dt \right| \leq C \left( 1 + \frac{1 +  \|v_0\|_{H^1}^2}{T} \right) \leq C \left( 1 + \frac{\rho}{T} \right).
\label{eq:higher:moment:3}
\end{align}
Therefore we may combine \eqref{eq:higher:moment:2} and \eqref{eq:higher:moment:3} and obtain
\begin{align} 
\int_V f_{n,R}(v_0) d\mu(v_0) 
&\leq \int_{B_V(\rho)} \left| \frac{1}{T} \int_0^T \int_V P_t (v_0, dv) f_{n,R}(v) dv dt \right|   d\mu(v_0)  \notag\\
&\qquad \qquad+ \int_{V \setminus B_V(\rho)} \left| \frac{1}{T} \int_0^T \int_V P_t (v_0, dv) f_{n,R}(v) dv dt \right|   d\mu(v_0) \notag\\
&\leq C (1+ \rho T^{-1}) \mu(B_V(\rho)) + R \mu(V \setminus B_V(\rho))
\label{eq:higher:moment:4}
\end{align}
for some positive constant $C$, that does not depend on $T,\rho,R$.
First let $\rho$ be sufficiently large, depending on $R$, so that
\begin{align*} 
R \mu( V \setminus B_V(\rho) ) \leq 1.
\end{align*}
Then we choose $T$ be sufficiently large, depending on $\rho$, so that
\begin{align*} 
\rho T^{-1} \leq 1
\end{align*}
and obtain from \eqref{eq:higher:moment:4} that
\begin{align} 
\int_V f_{n,R}(v_0) d\mu(v_0)  \leq C
\label{eq:higher:moment:5}
\end{align}
independently of $n$ and $R$. We apply the Fatou lemma to \eqref{eq:higher:moment:5} as $n \to \infty$ and obtain
\begin{align} 
\int_V \left( \log(1 + \| \nabla(|v_0|^3) \|_{L^2}^2 + \|\nabla \dz v_0\|_{L^2}^2) \wedge R \right) d\mu(v_0) \leq C.
\label{eq:higher:moment:6}
\end{align}
To obtain \eqref{eq:first:higher:moment}, we apply the Monotone Convergence Theorem to \eqref{eq:higher:moment:6}.
The above bound in particular implies that the invariant measure $\mu$ is supported on the space
\begin{align}
{\mathcal Y} = \left\{ v \in V \colon \nabla (|v|^3) \in L^2, \nabla \dz v \in L^2 \right\}.\label{eq:def:Yspace}
\end{align}
Note that in particular, for $v\in {\mathcal Y}$ we have $v \in L^{14}$ and $\dz v \in L^6$. 
Having established \eqref{eq:first:higher:moment}, we proceed to prove the higher regularity for the support of the invariant measures.

\begin{proof}[Proof of Theorem~\ref{thm:regularity}]
We need to show that
\begin{align} 
\int_{V} \log\Bigl(1 + \| \nabla (|v|^{7}) \|_{L^{2}}^{2} + \| \nabla (|\dz v|^3)\|_{L^2}^2  + \| \Delta v\|_{L^2}^2 \Bigr) d \mu (v) < \infty
\label{eq:second:higher:moment}
\end{align}
which in particular implies that the invariant measure is supported on the space
\begin{align}
{\mathcal X} = \left\{ v \in H^2 \cap H \colon \nabla (|v|^7) \in L^2, \nabla (|\dz v|^3) \in L^2 \right\} \label{eq:def:X:space}
\end{align}
and hence in particular on $H^2$. The proof of \eqref{eq:second:higher:moment} follows the same argument as the proof of \eqref{eq:first:higher:moment}, but instead of appealing to the moment bound in Lemma~\ref{lem:Y:moment}, we appeal to the strong moment bound given in theorem~\ref{thm:moments}.

To obtain \eqref{eq:second:higher:moment}, for $v \in {\mathcal Y}$ we define
\begin{align*} 
g_{n,R} (v) = \log( 1 + \|\nabla P_n   (| v|^7)\|_{L^2}^2 + \|\nabla P_n  (|\dz  v|^3) \|_{L^2}^2 + \| \Delta P_n v\|_{L^2}^2) \wedge R.
\end{align*}
Using the moment bound in Theorem~\ref{thm:moments} which for $v_0 \in {\mathcal Y}$ yields
\begin{align}
\EE \int_0^T g_{n,R}(v(s)) ds &\leq \EE \int_0^T \log(1 + \| \nabla ( |v(s)|^7) \|_{L^2}^2 + \| \nabla (| \dz v(s)|^3) \|_{L^2}^2 + \| \Delta v(s)\|_{L^2}^2 ) ds \notag\\
&\qquad \leq C\EE \log(1+ \|v_0\|_{L^{14}}^{14} + \| \dz v_0 \|_{L^6}^6 + \| \nabla v_0 \|_{L^2}^2) + C \EE \|v_0\|_{L^2}^2 + C T \notag\\
&\qquad \leq C \log(1+ \|\nabla (|v_0|^3)\|_{L^{2}}^{2} + \| \nabla \dz v_0 \|_{L^2}^2)  + C  \|v_0\|_{H^1}^2 + C T \label{eq:MAIN:2}
\end{align}
where in the last inequality we used the Sobolev embedding $H^1 \subset L^6$. Then, the equivalent of \eqref{eq:higher:moment:2} holds with $g_{n,R}$ replacing $f_{n,R}$, since $\mu$ is invariant. Repeating the argument in \eqref{eq:higher:moment:4}, with $B_V(\rho)$ 
replaced by $B_{\mathcal Y}(\rho)$, the ball of radius $\rho$ in ${\mathcal Y}$, we obtain by using that $\mu$ is supported on ${\mathcal Y}$, choosing $\rho$ sufficiently large, and then $T$ sufficiently large, that 
\begin{align*}
\int_V g_{n,R}(v_0) d\mu(v_0) = \int_{\mathcal Y} g_{n,R}(v_0) d\mu(v_0) \leq C
\end{align*}
for a constant $C$ independent of $n$ and $R$. With the Fatou lemma and the monotone convergence theorem we then conclude the proof of \eqref{eq:second:higher:moment}.
\end{proof}

\section{Higher regularity of invariant measures} \label{sec:higher:regularity}
In Section~\ref{sec:existence} we have proven the existence of an invariant measure for the 3D stochastic PEs, which is supported on $H^2(\TT^3) \cap H$ by the results in Section~\ref{sec:regularity}. The purpose of this section is to prove Theorem~\ref{thm:higher:regularity}, i.e., that the invariant measures are in fact supported on smoother functions, more precisely, on $H^{2+\eps}(\TT^3) \cap H$, where $\eps \in [0,1/42]$. {Under suitable conditions on $\sigma(v)$ we in fact conjecture that the invariant measures are supported on $C^\infty(\TT^3) \cap H$.} 

For this purpose, fix $\eps \in [0,1/42]$ for the rest of this section, and besides the quantities $E, \bar E$ defined in \eqref{eq:E:def}, $J, \bar J$ defined in \eqref{eq:J:def}, and $K, \bar K$ defined in \eqref{eq:K:def}, we introduce
\begin{align}
L_\eps = \| D^\eps \nabla v\|_{L^2}, \quad \bar L_\eps = \| D^{1+\eps} \nabla v\|_{L^2} \label{eq:L:eps:def},
\end{align}
where $D = \sqrt{-\Delta}$. 
The main ingredient in the proof of higher regularity is the following theorem.
\begin{theorem}[\bf Higher Moments] \label{thm:higher:moments:eps}
Let $v_0 \in L^2(\Omega;\DD(A))$ and let $v(t) = v(t,v_0)$ be the strong solution of the stochastic 3D Primitive Equations \eqref{eq:P:1}--\eqref{eq:BC:3}, and let $\eps \in [0,1/42]$.
Then, for any {deterministic time} $T>0$ we have 
\begin{align}
&\EE \int_0^T \log \Bigl( 1 + \| \nabla (|v(t)|^7)\|_{L^2}^2 + \|\nabla ( |\dz v(t)|^3)\|_{L^2}^2 + \| D^{1+\eps} \nabla v(t) \|_{L^2}^2 \Bigr) dt \notag\\
& \qquad \leq C \EE \log\left(1 + \|v_0\|_{L^{14}}^{14} + \|\dz v_0\|_{L^6}^6 + \| D^{1+\eps} v_0\|_{L^2}^2 \right) + C \EE \|v_0\|_{L^2}^2 + C (1 + T)
\label{eq:higher:moments:eps}
\end{align}
for a suitable positive constant $C$.
\end{theorem}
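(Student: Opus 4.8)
The plan is to mirror the structure of the proof of Theorem~\ref{thm:moments}, replacing the quantity $L = \|\nabla v\|_{L^2}$ by the fractional quantity $L_\eps = \|D^\eps \nabla v\|_{L^2}$, while keeping the bounds for $J = \|v\|_{L^{14}}$ and $K = \|\dz v\|_{L^6}$ exactly as in Sections~\ref{sec:existence:moments:J} and~\ref{sec:existence:moments:K} (these are unaffected by the extra derivative). First I would derive the It\=o equation for $L_\eps^2$ by applying $D^\eps \nabla$ to \eqref{eq:P:1} and using the $L^2$ It\=o lemma, obtaining
\begin{align*}
d L_\eps^2 = \left( -2 \bar L_\eps^2 + 2 \langle D^\eps \nabla (v \cdot \grad v), D^\eps \nabla v\rangle + 2 \langle D^\eps \nabla (w \dz v), D^\eps \nabla v\rangle + \sum_k \|D^\eps \nabla \sigma_k(v)\|_{L^2}^2 \right) dt + dM_{t,L_\eps},
\end{align*}
where the pressure term again drops by the same integration by parts as in \eqref{eq:L:Ito} since $D^\eps$ commutes with all operators involved and acts tangentially. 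The key new estimates are pathwise bounds on the two nonlinear terms. For the $v \cdot \grad v$ term I would use the fractional product (Kato--Ponce) inequality together with the Gagliardo--Nirenberg interpolation $\|D^{s} f\|_{L^q} \lesssim \|f\|_{L^2}^{1-\theta}\|Df\|_{L^2}^\theta$ in three dimensions, distributing the $1+\eps$ derivatives so that at most one factor carries the top-order norm $\bar L_\eps$, and absorbing it via Young's inequality at the price of a term of the form $C \bar E^2 J^{a(\eps)}$, where the exponent $a(\eps)$ is a continuous function of $\eps$ equal to $28/11$ at $\eps = 0$; the restriction $\eps \le 1/42$ is exactly what keeps $a(\eps)$ (and the analogous exponent from the $w\dz v$ term, which is $4$ at $\eps = 0$) small enough that the subsequent interpolation step \eqref{eq:J:interpolation}--\eqref{eq:K:interpolation} still closes with room to spare. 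For the $w \dz v$ term I would use the same anisotropic splitting as in \eqref{eq:AL:2}, now with the fractional derivative, controlling $\|D^\eps\nabla w\|$ by $\|D^\eps \grad v\|$ via \eqref{eq:w} and estimating $\dz v$ in terms of $K$ and $\bar K$ through Lemma~\ref{lem:vz:interpolation}. The noise correction $\sum_k \|D^\eps\nabla\sigma_k(v)\|_{L^2}^2$ is bounded by $C(1 + L_\eps^2)$ using \eqref{eq:sigma:cond:Lip:quant} together with the fact that $\eps < 1/2$ so $D^{1+\eps}$ is dominated by $A$. This yields
\begin{align*}
A_{L_\eps} \leq - \bar L_\eps^2 + C \bar E^2 \left(1 + J^{a(\eps)} + K^{b(\eps)}\right) + C.
\end{align*}

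Next I would set $X_\eps := J^{14} + K^6 + L_\eps^2$ and $\bar X_\eps := \bar J^{14} + \bar K^6 + \bar L_\eps^2$, apply the It\=o formula to $\log(1 + X_\eps)$ exactly as in \eqref{eq:phi:Ito:2}--\eqref{eq:log:moment:2}, using the bounds \eqref{eq:AJ}, \eqref{eq:AK}, the new bound for $A_{L_\eps}$, the logarithm's derivative estimates \eqref{eq:phi:derivative:1}--\eqref{eq:phi:derivative:2}, the Burkholder--Davis--Gundy inequality for the martingale term, and finally the energy moment bound \eqref{eq:E:moment:2} to absorb $\EE\int_0^T \bar E^2\,ds$. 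This produces
\begin{align*}
\EE \log(1 + X_\eps(T)) + \EE \int_0^T \frac{\bar X_\eps(s)}{1 + X_\eps(s)}\,ds \leq \EE \log(1 + X_\eps(0)) + C \EE E(0)^2 + C T.
\end{align*}
The last step, promoting this to a bound on $\EE\int_0^T \log(1 + \bar X_\eps(s))\,ds$, proceeds as in \eqref{eq:log:moment:3}--\eqref{eq:moment:existence}: write $\log(1+\bar X_\eps) \le \bar X_\eps/(1+X_\eps) + \log(1+X_\eps)$, and then control $\log(1+X_\eps)$ by interpolation. Here I reuse \eqref{eq:J:interpolation} and \eqref{eq:K:interpolation} verbatim for $J^{14}$ and $K^6$, while for $L_\eps^2$ I interpolate $\|D^\eps \nabla v\|_{L^2}^2 \le \|\nabla v\|_{L^2}^{2(1-\eps)}\|D\nabla v\|_{L^2}^{2\eps} \le C\bar E^{2(1-\eps)}\bar L_\eps^{2\eps} \le C\bar E^2 + C\bar E^{2(1-\eps)}\bar X_\eps^{\eps}$, which since $\eps \le 1/42 < 3/4$ can be absorbed into $(1+\bar X_\eps)^{3/4}$; combined with the interpolation bounds for $J$ and $K$ this gives $1 + X_\eps \le C(1 + \bar E^2)^7(1 + \bar X_\eps)^{3/4}$, hence $\log(1+X_\eps) \le C(1+\bar E^2) + \tfrac34 \log(1+\bar X_\eps)$, and the $\tfrac34$-term is absorbed on the left. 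Applying \eqref{eq:E:moment:2} once more and rewriting $\EE\log(1+X_\eps(0))$ in terms of the stated initial-data norms (using $\|D^{1+\eps}v_0\|_{L^2} \le C L_\eps(0)$'s counterpart and the Sobolev embeddings) yields \eqref{eq:higher:moments:eps}.

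The main obstacle I anticipate is the bookkeeping of the fractional derivative in the nonlinear estimates: one must verify that the Kato--Ponce commutator and product estimates are applicable with the anisotropic Lebesgue exponents forced by the Primitive-Equations structure (where $w$ is only a $z$-antiderivative of $\grad\cdot v$), and, crucially, that the resulting powers $a(\eps)$, $b(\eps)$ on $J$ and $K$ stay below the thresholds $28/3 \cdot (3/7)$-type values implicit in \eqref{eq:J:interpolation}--\eqref{eq:K:interpolation}. This is precisely where the numerical constraint $\eps \le 1/42$ enters, and checking it amounts to tracking the exponents through the chain of Gagliardo--Nirenberg inequalities; everything else is a routine adaptation of Section~\ref{sec:existence}.
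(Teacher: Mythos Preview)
Your proposal is essentially correct and follows the same architecture as the paper's proof: reuse the $J,K$ bounds, derive an It\=o equation for $L_\eps^2$, bound the drift by $-\bar L_\eps^2 + C(1+\bar E^2)(1+X_\eps)$, apply $\log(1+X_\eps)$, and close via the same interpolation trick. Two points of detail differ from what the paper actually does, and they are worth knowing since they affect where the constraint $\eps\le 1/42$ enters.

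First, for the $w\,\dz v$ term the paper does \emph{not} use the anisotropic $L^\infty_z L^4_x$ splitting from \eqref{eq:AH1:2}, nor Lemma~\ref{lem:vz:interpolation}. Instead it integrates by parts to $\langle D^\eps(w\,\dz v), D^\eps\Delta v\rangle$, applies the isotropic fractional product estimate~\eqref{eq:Heps:product}, and bounds the resulting pieces $\|D^\eps w\|_{L^3}\|\dz v\|_{L^6}$ and $\|w\|_{L^{7/2}}\|D^\eps\dz v\|_{L^{14/3}}$ by standard Gagliardo--Nirenberg. The anisotropic route you suggest is delicate here because $D^\eps$ mixes the $x$ and $z$ variables; the isotropic approach avoids this entirely.

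Second, the numerical restriction $\eps\le 1/42$ does \emph{not} arise in the final interpolation step \eqref{eq:J:interpolation}--\eqref{eq:K:interpolation} (that step only needs $\eps\le 3/5$, cf.~\eqref{eq:Xeps:absorb}). It enters earlier, in the $A_{L,\eps}$ bound itself: after Young's inequality the $I_{22}$ piece produces a power $J^{6(1+7\eps)/(1-21\eps)}$, and requiring this exponent to be $\le 14$ gives exactly $\eps\le 1/42$. Thus the paper's drift bound already reads $A_{L,\eps}\le -\bar L_\eps^2 + C(1+\bar E^2)(1+J^{14}+K^6+L_\eps^2)$, with the powers matching $X_\eps$ directly, rather than carrying $\eps$-dependent exponents $a(\eps),b(\eps)$ into the logarithmic step.
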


Assuming the moment bound \eqref{eq:higher:moments:eps} holds, let us first give the proof of higher regularity for the invariant measures. 
\begin{proof}[Proof of Theorem~\ref{thm:higher:regularity}]
 We need to show that for an invariant measure $\mu$ we have
\begin{align*} 
\int_{V} \Bigl( 1 + \|\nabla (|v|^7) \|_{L^2}^2 + \|\nabla (|\dz v|^3)\|_{L^2}^2 + \| D^{1+\eps} \nabla v\|_{L^2}^2 \Bigr) d\mu(v) < \infty
\end{align*}
We already know from Theorem~\ref{thm:regularity} that invariant measures are supported on the space ${\mathcal X}$ defined in \eqref{eq:def:X:space}, and \eqref{eq:second:higher:moment} holds. As in the proof of Theorem~\ref{thm:regularity}, we define for $v \in {\mathcal X}$ the function
\begin{align*} 
g_{n,R} (v) = \log( 1 + \|\nabla P_n   (| v|^7)\|_{L^2}^2 + \|\nabla P_n  (|\dz  v|^3) \|_{L^2}^2 + \| \nabla P_n D^{1+\eps} v\|_{L^2}^2) \wedge R
\end{align*}
with $n \geq 1$, and $R>0$. Using the moment bound \eqref{eq:higher:moments:eps} we may show that for $v_0 \in {\mathcal X}$ we have
\begin{align*} 
\EE \int_0^T g_{n,R}(v(s))ds 
&\leq C \EE \log\left(1 + \|v_0\|_{L^{14}}^{14} + \|\dz v_0\|_{L^6}^6 + \| D^{1+\eps} v_0\|_{L^2}^2 \right) + C \EE \|v_0\|_{L^2}^2 + C (1 + T)\notag\\
&\leq C \EE \|v_0\|_{H^2}^2 + C(1+T)
\end{align*}
with $C$ independent of $R$ and $n$. 
The same arguments as in Section~\ref{sec:regularity}, estimates \eqref{eq:higher:moment:2}--\eqref{eq:higher:moment:5}, give that 
\begin{align*} 
\int_{V} g_{n,R} (v_0) d\mu(v_0) = \int_{\mathcal X} g_{n,R} (v_0) d\mu(v_0) \leq C
\end{align*}
holds uniformly in $n$ and $R$. We conclude the proof with the Fatou lemma and the monotone convergence theorem. In particular, this shows that the invariant measure $\mu$ is supported on $H^{2+\eps}$.
\end{proof}

The rest of this section is devoted to the proof of Theorem~\ref{thm:higher:moments:eps}.
We recall from \eqref{eq:J:Ito}, \eqref{eq:AJ}, \eqref{eq:K:Ito}, and \eqref{eq:AK} that for $J$ and $K$ we have the It\=o formula
\begin{align}
d ( J^{14} + K^6 ) = (A_{J} + A_{K} ) dt + ( d M_{t,J} + d M_{t,K}), \label{eq:J+K:Ito}
\end{align}
and the bound
\begin{align}
A_J + A_K \leq  - (\bar J^{14} + \bar K^6) + C (1+ \bar E^2) (J^{14}+ K^6 + 1) \label{eq:regularity:JK:bar},
\end{align}
holds for a suitable positive constant $C$. As in the previous section for $\eps=0$, we couple \eqref{eq:J+K:Ito} with the evolution of $L_\eps$, and then obtain suitable moment bounds for $X_\eps = J^{14} + K^6 + L_\eps^2$.

\subsection{Bounds for \texorpdfstring{$L_\eps$}{L epsilon}} \label{sec:L:eps:moments}
We apply $\nabla D^\eps$ to \eqref{eq:P:1}, use It\=o Lemma pointwise in $x$, and then take an $L^2$ inner product with $D^\eps \nabla v$ to obtain
\begin{align}
d L_\eps^2 
& = \left( - 2 \bar L_\eps^2 + 2 \langle  v \cdot \grad v, \Delta D^{2\eps} v \rangle + 2 \langle  w \dz v, \Delta D^{2\eps} v\rangle  + \sum_k   \|\nabla D^\eps \sigma_k(v)\|_{L^2}^2 \right) dt \notag\\
& \qquad + 2 \sum_k \langle \nabla D^\eps \sigma_k(v), \nabla D^\eps v\rangle dW^k_t =: A_{L,\eps} dt +  dM_{t,L,\eps} \label{eq:L:eps:Ito}.
\end{align}
Note that in order to obtain \eqref{eq:L:eps:Ito} we used 
\begin{align*}
\langle \grad p , D^{2\eps} \Delta v \rangle = 0.
\end{align*}
We write 
\[ A_{L,\eps} = I_{1,\eps} + I_{2,\eps} + \| \nabla D^\eps \sigma(v)\|_{L_2(L^2)}^2,\]
where $I_{1,\eps}$ is the term arising from $v \cdot \grad$, and $I_{2,\eps}$ is the one arising due to $w \dz$. By our assumptions on $\sigma(v)$ we have
\begin{align}
 \| \nabla D^\eps \sigma(v)\|_{L_2(L^2)}^2 \leq C (1 + \| D^\eps \nabla v\|_{L^2}^2) = C (1+ L_\eps^2) \label{eq:A:J:eps:1}.
\end{align}

In order to bound $I_{1,\eps}$ we appeal to the fractional Sobolev product estimate~\cite{Taylor91}
\begin{align}
\| D^\eps ( fg) \|_{L^2} \leq C \| D^\eps f\|_{L^{p_1}} \| g\|_{L^{p_2}} + C \| f\|_{L^{p_3}} \|D^\eps g\|_{L^{p_4}} 
\label{eq:Heps:product}
\end{align}
where $1/2 = 1/p_1 + 1/p_2 = 1/p_3 + 1/p_4$, and $p_1,p_2,p_3,p_4 \in ( 2,\infty)$, to obtain
\begin{align}
|I_{1,\eps}| &= 2 |\langle  v \cdot \grad v, \Delta D^{2\eps} v\rangle| 
\leq 2 \| D^\eps \left( (v \cdot \grad) v \right)\|_{L^2} \| \Delta D^\eps v \|_{L^2}\notag\\
&\leq C \left( \| D^\eps v \|_{L^{14}} \|\nabla v\|_{L^{7/3}} + \| v\|_{L^{14}} \| D^\eps \nabla v \|_{L^{7/3}} \right) \bar L_\eps \notag\\
&=: C (I_{11} + I_{12}) \bar L_\eps.
\label{eq:J12:def}
\end{align}
Using the Sobolev inequality 
\begin{align*}
\| D^{5/7} u \|_{L^{14}} \leq C \| D^2 u \|_{L^2} 
\end{align*}
and the Gagliardo-Nirenberg inequality
\begin{align}
\| u \|_{L^{7/3}} \leq C \| u \|_{L^{2}}^{11/14} \| \nabla u \|_{L^2}^{3/14} \label{eq:GN:1}
\end{align}
we may estimate for all $\eps \in [0,1/7]$
\begin{align*}
I_{11} &= \| D^\eps v \|_{L^{14}} \|\nabla v\|_{L^{7/3}}
\leq C \|v \|_{L^{14}}^{1 - \frac{7 \eps}{5}} \| D^{5/7} v\|_{L^{14}}^{\frac{7\eps}{5}} \| \nabla v \|_{L^2}^{\frac{11}{14}} \| D^2 v \|_{L^2}^{\frac{3}{14}} \notag\\
&\leq C \|v \|_{L^{14}}^{1 - \frac{7 \eps}{5}} \| D^{2} v\|_{L^{2}}^{\frac{7\eps}{5}} \| \nabla v \|_{L^2}^{\frac{11}{14}} \| D^2 v \|_{L^2}^{\frac{3}{14}} \notag\\
&\leq C J^{1- \frac{7\eps}{5}} \bar E^{\frac{11}{14}} \bar L_\eps^{\frac{3}{14} + \frac{7\eps}{5}}.
\end{align*}
In the last inequality above we  used the Poincar\'e inequality $\|D^2 v\|_{L^2} \leq C L_\eps$. Therefore, by Young's inequality we arrive at
\begin{align}
C I_{11} \bar L_\eps 
&\leq C J^{1- \frac{7\eps}{5}} \bar E^{\frac{11}{14}} \bar L_\eps^{\frac{17}{14} + \frac{7\eps}{5}} \notag\\
&\leq \frac 14 \bar L_\eps^2 + C J^{\frac{140 - 196 \eps}{55-98\eps}} \bar E^{\frac{110}{55 - 98 \eps}}  \notag\\
&\leq \frac 14 \bar L_\eps^2 + C J^{\frac{140 - 196 \eps}{55-98\eps}} \bar E^2 L_\eps^{\frac{196 \eps}{55 - 98 \eps}} 
\label{eq:I11}
\end{align}
since $\bar E \leq C L_\eps$. To bound the $I_{12}$ term in \eqref{eq:J12:def} we proceed as follows. From \eqref{eq:GN:1} we obtain
\begin{align*}
I_{12} = \| v\|_{L^{14}} \| D^\eps \nabla v\|_{L^{7/3}} &\leq C \| v\|_{L^{14}} \|D^\eps \nabla v\|_{L^2}^{\frac{11}{14}} \| D^{1+\eps}  \nabla v\|_{L^2}^{\frac{3}{14}} \leq C J L_\eps^{\frac{11}{14}} \bar L_\eps^{\frac{3}{14}}
\end{align*}
and hence by interpolating $L_\eps \leq \bar E^{1/(1+\eps)} \bar L_\eps^{\eps/(1+\eps)}$ we arrive at
\begin{align}
C I_{12} \bar L_\eps 
&\leq C J L_\eps^{\frac{11}{14}} \bar L_\eps^{\frac{17}{14}} \leq C J \bar E^{\frac{11}{14(1+\eps)}} \bar L_\eps^{\frac{17 + 28 \eps}{14(1+\eps)}} \notag\\
&\leq \frac 14 \bar L_\eps^2 + C J^{\frac{28(1+\eps)}{11}} \bar E^2 
\label{eq:I12}.
\end{align}
Combining \eqref{eq:J12:def}, \eqref{eq:I11}, and \eqref{eq:I12} we thus obtain
\begin{align}
|I_{1,\eps}| 
& \leq \frac 12 \bar L_\eps^2 + \bar E^2 \left( J^{\frac{140 - 196 \eps}{55-98\eps}}  L_\eps^{\frac{196 \eps}{55 - 98 \eps}} + J^{\frac{28(1+\eps)}{11}} \right) \notag\\
& \leq \frac 12 \bar L_\eps^2 + \bar E^2 \left(  L_\eps^{2} + J^{\frac{280-392 \eps}{110 - 392 \eps}} + J^{\frac{28(1+\eps)}{11}} \right) \notag\\
& \leq \frac 12 \bar L_\eps^2 + \bar E^2 \left(  L_\eps^{2} + 1+  J^{14} \right) \label{eq:I1eps}
\end{align}
for some suitable constant $C$, and all $\eps \in [0,1/7]$.

It is left to estimate the $I_{2,\eps}$ term in $A_{L,\eps}$. Using \eqref{eq:Heps:product} we have
\begin{align}
|I_{2,\eps}| 
&= 2 | \langle  w \dz v, \Delta D^{2\eps} v\rangle | 
\leq C \left( \| D^\eps w\|_{L^3} \| \dz v\|_{L^6} +  \| w\|_{L^{7/2}} \| D^\eps  \dz v\|_{L^{14/3}} \right)\| D^\eps \Delta v\|_{L^2}\notag\\
&\leq C \left( I_{21} + I_{22} \right) \bar L_\eps \label{eq:J22:def}.
\end{align}
We bound the $I_{21}$ term in \eqref{eq:J22:def} as
\begin{align*}
I_{21} 
&\leq C \| D^\eps \nabla v \|_{L^3} \| \dz v \|_{L^6}  \leq C \| D^\eps \nabla v\|_{L^2}^{\frac 12 } \| D^{1+\eps} \nabla v \|_{L^2}^{\frac 12}\| \dz v \|_{L^6} \notag\\
&\leq C \left( \|\nabla v\|_{L^2}^{\frac{1}{1+\eps}} \|D^{1+\eps} \nabla v \|_{L^2}^{\frac{\eps}{1+\eps}} \right)^{\frac 12} \| D^{1+\eps} \nabla v \|_{L^2}^{\frac 12}\| \dz v \|_{L^6} \notag\\
&\leq C \bar E^{\frac{1}{2(1+\eps)}} \bar L_\eps^{\frac{1+2\eps}{2(1+\eps)}} K,
\end{align*}
and therefore
\begin{align}
C I_{21} \bar L_\eps 
&\leq C K \bar E^{\frac{1}{2(1+\eps)}} \bar L_\eps^{\frac{3+4\eps}{2(1+\eps)}} 
\leq \frac 14 \bar L_\eps^2 + C K^{4(1+\eps)} \bar E^2\notag\\
&\leq \frac 14 \bar L_\eps^2 + C (1 + K^6) \bar E^2.
\label{eq:I21}
\end{align}
To bound the $I_{22}$ term in \eqref{eq:J22:def}, we use the Gagliardo-Nirenberg inequality combined with the fact that we are on a bounded domain
\begin{align*}
\| D^\eps u\|_{L^{14/3}} \leq C \| u \|_{L^{14/3}}^{\frac{1}{1+ 7 \eps}} \| D^{1+\eps} u \|_{L^2}^{\frac{ 7 \eps}{1 + 7\eps}} 
\leq C \| u\|_{L^6}^{\frac{1}{1+ 7 \eps}} \| D^{1+\eps} u \|_{L^2}^{\frac{ 7 \eps}{1 + 7\eps}},
\end{align*}
with $u = \dz v$, and obtain
\begin{align*}
I_{22} &\leq C \| \nabla v\|_{L^{7/2}} \|D^\eps \dz v\|_{L^{14/3}}\notag\\
&\leq C \| v \|_{L^{14}}^{\frac 12} \| D^2 v \|_{L^2}^{\frac 12} \| \dz v\|_{L^6}^{\frac{1}{1 +7 \eps}} \| D^{1+\eps} \dz v \|_{L^2}^{\frac{7 \eps}{1+7\eps}} \notag\\
&\leq C J^{\frac 12} \bar L_\eps^{\frac 12}  K^{\frac{1}{1+7\eps}} \bar L_\eps^{\frac{7 \eps}{1+7 \eps}}
\end{align*}
where in the last inequality we have also used the Poincar\'e inequality $\|D^2 v\|_{L^2} \leq \bar L_\eps$. Therefore, for any $\eps\in[0,1/42]$ we obtain
\begin{align}
C I_{22} \bar L_\eps 
&\leq C J^{\frac 12} K^{\frac{1}{1+7\eps}} \bar L_\eps^{\frac{3+35 \eps}{2(1+7\eps)}} 
\leq \frac 14 \bar L_\eps^2 + C J^{\frac{2 (1 + 7 \eps)}{1 - 7 \eps}} K^{\frac{4}{1-7\eps}} \notag\\
&\leq \frac 14 \bar L_\eps^2 + C J^{\frac{6(1 + 7 \eps)}{1 - 21 \eps}} + C K^{6} \leq \frac 14 \bar L_\eps^2 + C( 1 + J^{14} +  K^{6})
\label{eq:I22}.
\end{align}
Finally, by combining \eqref{eq:J22:def}, \eqref{eq:I21}, and \eqref{eq:I22} we arrive at 
\begin{align}
|I_{2,\eps}| \leq \frac 12 \bar L_\eps^2 + C (1+J^{14} + K^6) (1+\bar E^2) \label{eq:I2eps}.
\end{align}
We conclude the estimate
\begin{align}
A_{L,\eps} \leq \bar L_\eps^2 + C (1+\bar E^2) ( 1 + J^{14} + K^6 + L_\eps^2), \label{eq:AL:eps}
\end{align}
which holds for $\eps \in [0,1/42]$ and a suitable positive constant $C$, by collecting \eqref{eq:A:J:eps:1}, \eqref{eq:I1eps}, and \eqref{eq:I2eps}.

\subsection{Coupled Moment Bounds for \texorpdfstring{$J$}{J},  \texorpdfstring{$K$}{K}, and  \texorpdfstring{$L_\eps$}{L epsilon}}
From \eqref{eq:J+K:Ito} and \eqref{eq:L:eps:Ito} we have that the quantity
\begin{align*}
X_\eps = J^{14} + K^6 + L_\eps^2
\end{align*}
obeys
\begin{align}
d X_\eps + \bar X_\eps dt = \left( A_J + A_K + A_{L,\eps} - \bar X_\eps \right) dt + (dM_{t,J} + dM_{t,K} + dM_{t,L,\eps})
\label{eq:X:eps:Ito}
\end{align}
where we let 
\begin{align*}
\bar X_\eps = \bar J^{14} + \bar K^6 + \bar L_\eps^2.
\end{align*}
We also know from \eqref{eq:regularity:JK:bar}  and \eqref{eq:AL:eps} that
\begin{align}
A_J + A_K + A_{L,\eps} - \bar X_\eps \leq  C (1+ \bar E^2) (1+ X_\eps). \label{eq:higher:drift:bound}
\end{align}

\begin{proof}[Proof of Theorem~\ref{thm:higher:moments:eps}]
We now follow precisely the same steps as in Section~\ref{sec:existence:moments}. To avoid redundancy we skip the parts which are mutatis mutandis. Letting $\phi(X_\eps) = \log(1+ X_\eps)$, as in \eqref{eq:log:moment:2}, we conclude from \eqref{eq:E:moment}, \eqref{eq:X:eps:Ito} and \eqref{eq:higher:drift:bound}, and the Ito formula for $\phi(X_\eps)$ that for any deterministic time $T>0$
\begin{align}
\EE \log(1+X_\eps(T)) + \EE \int_0^T \frac{\bar X_\eps(s)}{1+X_\eps(s)} ds \leq \EE \log(1+X_\eps(0)) + C \EE E(0)^2 + C T
\label{eq:log:moment:eps:1}
\end{align}
for a suitable positive constant $C$. As in \eqref{eq:log:moment:3}, the above estimate implies
\begin{align}
\EE \int_0^T \log(1+ \bar X_\eps(s)) ds \leq \EE \log(1+X_\eps(0)) + C \EE E(0)^2 + C T + \EE \int_0^T \log(1+X_\eps(s)) ds.
\label{eq:log:moment:eps:2}
\end{align}
To obtain the desired moment estimate that grows at most linearly in $T$ we recall from \eqref{eq:J:interpolation} and \eqref{eq:K:interpolation} that
\begin{align}
1+ J^{14} + K^6 \leq C (1+ \bar E^2)^7 (1+\bar X)^{3/4} \leq C (1+ \bar E^2)^7 (1+\bar X_\eps)^{3/4}
\label{eq:JK:inter}
\end{align}
since by the Poincar\'e inequality $\bar L = \| \Delta v\|_{L^2} \leq C \|D^\eps \Delta v\|_{L^2} = \bar L_\eps$. Combined with the interpolation inequality
\begin{align*}
L_\eps^2 = \| D^\eps \nabla v\|_{L^2}^2 \leq C \| \nabla v\|_{L^2}^{\frac{2}{1+\eps}} \| D^{1+\eps} \nabla v\|_{L^2}^{\frac{2 \eps}{1+\eps}}\leq C \bar E^{\frac{2}{1+\eps}} \| D^{1+\eps} \nabla v\|_{L^2}^{\frac{2 \eps}{1+\eps}}
\end{align*}
when $\eps \in [0,3/5]$, the estimate \eqref{eq:JK:inter} yields
\begin{align}
1 + X_\eps \leq C (1+\bar E^2)^7 (1 + \bar X_\eps)^{3/4}. 
\label{eq:Xeps:absorb}
\end{align}
Taking $\log$ of both sides of \eqref{eq:Xeps:absorb} and combining with \eqref{eq:log:moment:eps:2}, similarly to
\eqref{eq:moment:existence}, we obtain
\begin{align*}
\frac 14 \EE \int_0^T \log(1+ \bar X_\eps(s)) ds \leq \EE \log(1+X_\eps(0)) + C \EE E(0)^2 + C T
\end{align*}
for all $\eps \in [0,1/42]$ where $C$ is a positive constant.
\end{proof}

\section*{Acknowledgements}
NGH gratefully acknowledges the support of the Institute for Mathematics and its Applications (IMA) at the University of Minnesota. 
The work of NGH was supported in part by the NSF grant DMS-1313272, the work of IK was supported in part by the NSF grant DMS-1311943, the work of VV was supported in part by the NSF grant DMS-1211828, while the work of MZ was supported in part by the NSF grant DMS-1109562.


\begin{thebibliography}{BGGMRB03}

\bibitem[AOT13]{AzouaniOlsonTiti13}
A.~Azouani, E.~Olson, and E.S. Titi.
\newblock Continuous data assimilation using general interpolant observables.
\newblock {\em arXiv preprint arXiv:1304.0997}, 04 2013.

\bibitem[Ben92]{Bensoussan92}
A.~Bensoussan.
\newblock {\em Stochastic control of partially observable systems}.
\newblock Cambridge University Press Cambridge, 1992.

\bibitem[BGGMRB03]{BGMR}
D.~Bresch, F.~Guill{{\'e}}n-Gonz{{\'a}}lez, N.~Masmoudi, and M.~A.
  Rodr{\'{\i}}guez-Bellido.
\newblock On the uniqueness of weak solutions of the two-dimensional primitive
  equations.
\newblock {\em Differential Integral Equations}, 16(1):77--94, 2003.

\bibitem[BKL01]{BricmontKupiainenLefevere2001}
J.~Bricmont, A.~Kupiainen, and R.~Lefevere.
\newblock Ergodicity of the 2{D} {N}avier-{S}tokes equations with random
  forcing.
\newblock {\em Comm. Math. Phys.}, 224(1):65--81, 2001.
\newblock Dedicated to Joel L. Lebowitz.

\bibitem[BOT13]{BessaihOlsonTiti13}
H.~Bessaih, E.~Olson, and E.S Titi.
\newblock Continuous assimilation of data with stochastic noise.
\newblock {\em In preparation}, 2013.

\bibitem[Bre03]{Brenier03}
Y.~Brenier.
\newblock Remarks on the derivation of the hydrostatic {E}uler equations.
\newblock {\em Bull. Sci. Math.}, 127(7):585--595, 2003.

\bibitem[Che13]{Chueshov13}
I.~Chueshov. 
\newblock A squeezing property and its applications to a description of long time 
	behaviour in the 3D viscous primitive equations. 
\newblock {\em arXiv preprint arXiv:1211.4408v2}, 2013.


\bibitem[CK08]{ChueshovKuksin08}
I.~Chueshov, S.~Kuksin. 
\newblock Random kick-forced 3D Navier?Stokes equations in a thin domain. 
\newblock {\em Arch. Ration. Mech. Anal.},~188(1) :117--153, 2008.

\bibitem[CGHV13]{CGHV13}
P.~Constantin, N.~Glatt-Holtz, and V.~Vicol.
\newblock Unique ergodicity for fractionally dissipated, stochastically forced
  2{D} {E}uler equations.
\newblock {\em Comm. Math. Phys. To appear, 2013. arXiv preprint arXiv:1304.2022,} 2013.

\bibitem[CINT12]{CINT12}
C.~Cao, S.~Ibrahim, K.~Nakanishi, and E.S. Titi.
\newblock Finite-time blowup for the inviscid primitive equations of oceanic
  and atmospheric dynamics.
\newblock {\em arXiv preprint arXiv:1210.7337}, 10 2012.

\bibitem[CT07]{CT}
C.~Cao and E.S. Titi.
\newblock Global well-posedness of the three-dimensional viscous primitive
  equations of large scale ocean and atmosphere dynamics.
\newblock {\em Ann. of Math. (2)}, 166(1):245--267, 2007.

\bibitem[Deb11]{Debussche2011a}
A.~Debussche.
\newblock Ergodicity results for the stochastic navier-stokes equations: an
  introduction.
\newblock {\em Preprint}, 2011.
\newblock (to appear).

\bibitem[DGHT11]{DGT}
A.~Debussche, N.~Glatt-Holtz, and R.~Temam.
\newblock Local martingale and pathwise solutions for an abstract fluids model.
\newblock {\em Physica D}, 240(14-15):1123--1144, 2011.

\bibitem[DGHTZ12]{DGHTZ12}
A.~Debussche, N.~Glatt-Holtz, R.~Temam, and M.~Ziane.
\newblock Global existence and regularity for the 3d stochastic primitive
  equations of the ocean and atmosphere with multiplicative white noise.
\newblock {\em Nonlinearity}, 25(7):2093, 2012.

\bibitem[DO06]{DO}
A.~Debussche and C.~Odasso.
\newblock Markov solutions for the 3 d stochastic navier--stokes equations with
  state dependent noise.
\newblock {\em Journal of Evolution Equations}, 6(2):305--324, 2006.

\bibitem[DPZ92]{ZabczykDaPrato1992}
G.~Da~Prato and J.~Zabczyk.
\newblock {\em Stochastic equations in infinite dimensions}, volume~44 of {\em
  Encyclopedia of Mathematics and its Applications}.
\newblock Cambridge University Press, Cambridge, 1992.

\bibitem[DPZ96]{ZabczykDaPrato1996}
G.~Da~Prato and J.~Zabczyk.
\newblock {\em Ergodicity for infinite-dimensional systems}, volume 229 of {\em
  London Mathematical Society Lecture Note Series}.
\newblock Cambridge University Press, Cambridge, 1996.

\bibitem[EG11]{EvansGastler11}
L.~C.~Evans and R.~Gastler.
\newblock Some results for the primitive equations with physical boundary
  conditions.
\newblock {\em http://arxiv.org/abs/1111.1245}, 11 2011.

\bibitem[EMS01]{EMattinglySinai01}
W.~E, J.C. Mattingly, and Y.~Sinai.
\newblock Gibbsian dynamics and ergodicity for the stochastically forced
  {N}avier-{S}tokes equation.
\newblock {\em Comm. Math. Phys.}, 224(1):83--106, 2001.
\newblock Dedicated to Joel L. Lebowitz.

\bibitem[EPT07]{EwaldPetcuTemam2007}
B.~Ewald, M.~Petcu, and R.~Temam.
\newblock Stochastic solutions of the two-dimensional primitive equations of
  the ocean and atmosphere with an additive noise.
\newblock {\em Anal. Appl. (Singap.)}, 5(2):183--198, 2007.

\bibitem[Eyi96]{Eyink96}
G.L. Eyink.
\newblock Exact results on stationary turbulence in 2d: consequences of
  vorticity conservation.
\newblock {\em Physica D: Nonlinear Phenomena}, 91(1):97--142, 1996.


\bibitem[FGHRT2013]{FoldesGlattHoltzRichardsThomann2013}
	J.~Foldes and N.~Glatt-Holtz and G.~Richards and E.~Thomann
\newblock Ergodic and Mixing Properties of the Boussinesq Equations with a Degenerate Random Forcing.
\newblock {\em http://arxiv.org/abs/1311.3620}, 2013.


\bibitem[FM95]{FlandoliMaslowski1}
F.~Flandoli and B.~Maslowski.
\newblock Ergodicity of the {$2$}-{D} {N}avier-{S}tokes equation under random
  perturbations.
\newblock {\em Comm. Math. Phys.}, 172(1):119--141, 1995.

\bibitem[GH09]{GH07}
B.~Guo and D.~Huang.
\newblock 3{D} stochastic primitive equations of the large-scale ocean: global
  well-posedness and attractors.
\newblock {\em Comm. Math. Phys.}, 286(2):697--723, 2009.

\bibitem[GHKVZ13a]{GHKVZ13a}
N.~Glatt-Holtz, I.~Kukavica, V.~Vicol, and M.~Ziane.
\newblock Regularity of invariant measures for the two dimensional stochastic
  {N}avier-{S}tokes equations in a bounded domain.
\newblock {\em Preprint}, 2013.

\bibitem[GHKVZ13b]{GHKVZ13b}
N.~Glatt-Holtz, I.~Kukavica, V.~Vicol, and M.~Ziane.
\newblock Statistically invariant states for the stochastic primitive
  equations: moment bounds and ergodic properties.
\newblock {\em Preprint}, 2013.

\bibitem[GHT11a]{GlattHoltzTemam1}
N.~Glatt-Holtz and R.~Temam.
\newblock Cauchy convergence schemes for some nonlinear partial differential
  equations.
\newblock {\em Applicable Analysis}, 90(1):85 -- 102, 2011.

\bibitem[GHT11b]{GT}
N.~Glatt-Holtz and R.~Temam.
\newblock Pathwise solutions of the 2-d stochastic primitive equations.
\newblock {\em Applied Mathematics \& Optimization}, 63(3):401--433, 2011.

\bibitem[GHZ08]{GZ1}
N.~Glatt-Holtz and M.~Ziane.
\newblock The stochastic primitive equations in two space dimensions with
  multiplicative noise.
\newblock {\em Discrete Contin. Dyn. Syst. Ser. B}, 10(4):801--822, 2008.

\bibitem[HM06]{HairerMattingly06}
M.~Hairer and J.C. Mattingly.
\newblock Ergodicity of the 2{D} {N}avier-{S}tokes equations with degenerate
  stochastic forcing.
\newblock {\em Ann. of Math. (2)}, 164(3):993--1032, 2006.

\bibitem[HM11]{HairerMattingly2011}
M.~Hairer and J.~C. Mattingly.
\newblock A theory of hypoellipticity and unique ergodicity for semilinear
  stochastic pdes.
\newblock {\em Electron. J. Probab.}, 16(23):658--738, 2011.

\bibitem[HTZ02]{HTZ}
C.~Hu, R.~Temam, and M.~Ziane.
\newblock Regularity results for linear elliptic problems related to the
  primitive equations.
\newblock {\em Chinese Ann. Math. Ser. B}, 23(2):277--292, 2002.
\newblock Dedicated to the memory of Jacques-Louis Lions.

\bibitem[Ju07]{Ju}
N.~Ju.
\newblock The global attractor for the solutions to the 3{D} viscous primitive
  equations.
\newblock {\em Discrete Contin. Dyn. Syst.}, 17(1):159--179, 2007.

\bibitem[Kob06]{Ko}
G.M. Kobelkov.
\newblock Existence of a solution `in the large' for the 3{D} large-scale ocean
  dynamics equations.
\newblock {\em C. R. Math. Acad. Sci. Paris}, 343(4):283--286, 2006.

\bibitem[KS01]{KuksinShirikyan1}
S.~Kuksin and A.~Shirikyan.
\newblock A coupling approach to randomly forced nonlinear {PDE}'s. {I}.
\newblock {\em Comm. Math. Phys.}, 221(2):351--366, 2001.

\bibitem[KS02]{KuksinShirikyan2}
S.~Kuksin and A.~Shirikyan.
\newblock Coupling approach to white-forced nonlinear {PDE}s.
\newblock {\em J. Math. Pures Appl. (9)}, 81(6):567--602, 2002.

\bibitem[KS03]{KuksinShirikyan03}
S.B. Kuksin and A.~Shirikyan.
\newblock Some limiting properties of randomly forced two-dimensional
  navier-stokes equations.
\newblock {\em Proceedings of the Royal Society of Edinburgh}, 133A:875--891,
  2003.

\bibitem[KS12]{KuksinShirikian12}
S.~Kuksin and A.~Shirikyan.
\newblock {\em Mathematics of Two-Dimensional Turbulence}.
\newblock Number 194 in Cambridge Tracts in Mathematics. Cambridge University
  Press, 2012.

\bibitem[KTVZ11]{KTVZ}
I.~Kukavica, R.~Temam, V.C. Vicol, and M.~Ziane.
\newblock Local existence and uniqueness for the hydrostatic {E}uler equations
  on a bounded domain.
\newblock {\em J. Differential Equations}, 250(3):1719--1746, 2011.

\bibitem[Kup10]{Kupiainen10}
A.~Kupiainen.
\newblock Ergodicity of two dimensional turbulence.
\newblock {\em arXiv:1005.0587v1 [math-ph]}, 05 2010.

\bibitem[KV12]{KukavicaVicol12}
I.~Kukavica and V.~Vicol.
\newblock On moments for strong solutions of the 2d stochastic navier-stokes
  equations in a bounded domain.
\newblock {\em Asymptotic Analysis, accepted}, 2012.

\bibitem[KZ07a]{KukavicaZiane07}
I.~Kukavica and M.~Ziane.
\newblock On the regularity of the primitive equations of the ocean.
\newblock {\em Nonlinearity}, 20(12):2739--2753, 2007.

\bibitem[KZ07b]{KZ1}
I.~Kukavica and M.~Ziane.
\newblock The regularity of solutions of the primitive equations of the ocean
  in space dimension three.
\newblock {\em C. R. Math. Acad. Sci. Paris}, 345(5):257--260, 2007.

\bibitem[KZ08]{KZ3}
I.~Kukavica and M.~Ziane.
\newblock Uniform gradient bounds for the primitive equations of the ocean.
\newblock {\em Differential Integral Equations}, 21(9-10):837--849, 2008.

\bibitem[LTW92a]{LTW1}
J.-L. Lions, R.~Temam, and S.~Wang.
\newblock New formulations of the primitive equations of atmosphere and
  applications.
\newblock {\em Nonlinearity}, 5(2):237--288, 1992.

\bibitem[LTW92b]{LTW2}
J.-L. Lions, R.~Temam, and S.~Wang.
\newblock On the equations of the large-scale ocean.
\newblock {\em Nonlinearity}, 5(5):1007--1053, 1992.

\bibitem[LTW95]{LTW3}
J.-L. Lions, R.~Temam, and S.~Wang.
\newblock Mathematical theory for the coupled atmosphere-ocean models. ({CAO}
  {III}).
\newblock {\em J. Math. Pures Appl. (9)}, 74(2):105--163, 1995.

\bibitem[Mat99]{Mattingly1}
J.~C. Mattingly.
\newblock Ergodicity of {$2$}{D} {N}avier-{S}tokes equations with random
  forcing and large viscosity.
\newblock {\em Comm. Math. Phys.}, 206(2):273--288, 1999.

\bibitem[Mat02]{Mattingly2}
J.C. Mattingly.
\newblock Exponential convergence for the stochastically forced
  {N}avier-{S}tokes equations and other partially dissipative dynamics.
\newblock {\em Comm. Math. Phys.}, 230(3):421--462, 2002.

\bibitem[Mat03]{Mattingly03}
J.C. Mattingly.
\newblock On recent progress for the stochastic {N}avier {S}tokes equations.
\newblock In {\em Journ{\'e}es ``\'{E}quations aux {D}{\'e}riv{\'e}es
  {P}artielles''}, pages Exp. No. XI, 52. Univ. Nantes, Nantes, 2003.

\bibitem[MP06]{MattinglyPardoux1}
J.~C. Mattingly and {{\'E}}. Pardoux.
\newblock Malliavin calculus for the stochastic 2{D} {N}avier-{S}tokes
  equation.
\newblock {\em Comm. Pure Appl. Math.}, 59(12):1742--1790, 2006.

\bibitem[MW12]{MW12}
N.~Masmoudi and T.K. Wong.
\newblock On the {$H^s$} theory of hydrostatic {E}uler equations.
\newblock {\em Arch. Ration. Mech. Anal.}, 204(1):231--271, 2012.

\bibitem[MY02]{MasmoudiYoung02}
N.~Masmoudi and L.-S. Young.
\newblock Ergodic theory of infinite dimensional systems with applications to
  dissipative parabolic {PDE}s.
\newblock {\em Comm. Math. Phys.}, 227(3):461--481, 2002.

\bibitem[Nov65]{Novikov1965}
E.~A. Novikov.
\newblock Functionals and the random-force method in turbulence theory.
\newblock {\em Soviet Physics JETP}, 20:1290--1294, 1965.

\bibitem[Oda06]{Odasso06b}
C.~Odasso.
\newblock Ergodicity for the stochastic complex ginzburg--landau equations.
\newblock {\em Annales de l'Institut Henri Poincare (B) Probability and
  Statistics}, 42:417--454, 2006.

\bibitem[Ped82]{P}
J.~Pedlosky.
\newblock {\em Geophysical fluid dynamics}, volume~1.
\newblock New York and Berlin, Springer-Verlag., 1982.

\bibitem[Pet06]{Petcu06}
M.~Petcu.
\newblock On the three-dimensional primitive equations.
\newblock {\em Adv. Differential Equations}, 11(11):1201--1226, 2006.

\bibitem[PTZ09]{PetcuTemamZiane09}
M.~Petcu, R.~Temam, and M.~Ziane.
\newblock Some mathematical problems in geophysical fluid dynamics.
\newblock {\em Special Volume on Computational Methods for the Atmosphere and
  the Oceans}, 14:577--750, 2009.

\bibitem[Ren09]{Renardy09}
M.~Renardy.
\newblock Ill-posedness of the hydrostatic {E}uler and {N}avier-{S}tokes
  equations.
\newblock {\em Arch. Ration. Mech. Anal.}, 194(3):877--886, 2009.

\bibitem[RTT08]{RousseauTemamTribbia08}
A.~Rousseau, R.~Temam, and J.~Tribbia.
\newblock The 3{D} primitive equations in the absence of viscosity: boundary
  conditions and well-posedness in the linearized case.
\newblock {\em J. Math. Pures Appl. (9)}, 89(3):297--319, 2008.

\bibitem[Tay91]{Taylor91}
M.E. Taylor.
\newblock {\em Pseudodifferential operators and nonlinear {PDE}}, volume 100 of
  {\em Progress in Mathematics}.
\newblock Birkh{\"a}user Boston Inc., Boston, MA, 1991.

\bibitem[TZ04]{TZ}
R.~Temam and M.~Ziane.
\newblock Some mathematical problems in geophysical fluid dynamics.
\newblock In {\em Handbook of mathematical fluid dynamics. {V}ol. {III}}, pages
  535--657. North-Holland, Amsterdam, 2004.

\bibitem[Won12]{Wong12}
T.K. Wong.
\newblock Blowup of solutions of the hydrostatic Euler equations.
\newblock {\em arXiv preprint arXiv:1211.0113}, 11 2012.

\bibitem[WP05]{WP}
W.M. Washington and C.L. Parkinson.
\newblock {\em An introduction to three-dimensional climate modeling}.
\newblock Univ Science Books, 2005.

\bibitem[Zia95]{Z1}
M.~Ziane.
\newblock Regularity results for {S}tokes type systems related to climatology.
\newblock {\em Appl. Math. Lett.}, 8(1):53--58, 1995.

\bibitem[Zia97]{Z2}
M.~Ziane.
\newblock Regularity results for the stationary primitive equations of the
  atmosphere and the ocean.
\newblock {\em Nonlinear Anal.}, 28(2):289--313, 1997.

\end{thebibliography}
\end{document}